\documentclass[12pt,english]{amsart}
\usepackage[cp1251]{inputenc}
\usepackage[english]{babel}
\usepackage{amsmath,amsthm,amssymb}
\textwidth=170mm
\textheight=250mm

\setlength{\oddsidemargin}{0pt}
\setlength{\evensidemargin}{0cm}
\setlength{\topmargin}{-35pt}

\newtheorem{theorem}{Theorem}[section]
\newtheorem{lemma}[theorem]{Lemma}
\newtheorem{corollary}[theorem]{Corollary}
\newtheorem{remark}[theorem]{Remark}

\numberwithin{equation}{section}
\theoremstyle{definition}
\newtheorem{definition}[theorem]{Definition}

\begin{document}

\title[]
{Marcinkiewicz-type discretization
of $L^p$-norms under the Nikolskii-type inequality assumption}

\author{Egor Kosov}

\maketitle

\begin{abstract}
The paper studies the sampling discretization problem
for integral norms on subspaces of $L^p(\mu)$.
Several close to optimal results are obtained on subspaces
for which certain Nikolskii-type inequality is valid.
The problem of norms discretization is connected with the probabilistic
question about the approximation with high probability
of marginals of a high dimensional
random vector by sampling. As a byproduct of our approach
we refine the result of  O. Gu$\acute{e}$don and M.~Rudelson
concerning the approximation of marginals.
In particular, the obtained improvement
recovers a theorem
of J. Bourgain, J. Lindenstrauss, and V.~Milman
concerning embeddings
of  finite dimensional subspaces of $L^p[0, 1]$ into $\ell_p^m$.
The proofs in the paper use the recent
developments of the chaining technique
by R.~van~Handel.
\end{abstract}

\noindent
Keywords:
\!\!Discretization,
\!Chaining,
\!Entropy,
\!Approximation\! Theory,
\!Moments of random vectors

\noindent
AMS Subject Classification: 41A65, 62H12, 46B20, 46B09

\section{Introduction}

Let $\Omega$ be a compact set endowed with some probability Borel measure $\mu$.
Let $L$ be an $N$-dimensional subspace of $L^p(\mu)\cap C(\Omega)$.
In this paper we consider the following
problem of sampling discretization. Let $C>c>0$ be fixed.
What is the least possible number $m$
of points $X_1,\ldots, X_m$
such that
$$
c\|f\|_p^p\le \frac{1}{m}\sum\limits_{j=1}^m|f(X_j)|^p\le C\|f\|_p^p
$$
for every $f\in L$?
Here
$\displaystyle
\|f\|_p:=\Bigl(\int |f|^p\, d\mu \Bigr)^{1/p},\quad \|f\|_\infty:=\sup\limits_{x\in \Omega}|f(x)|.$

The obvious bound is $m\ge N$, so we are seeking for the conditions
on the subspace $L$ under which the sampling discretization problem could be solved with
the number of points $m$ close to the dimension of the subspace
(ideally, with $m=O(N)$).
This and similar
problems have been extensively
studied in recent years (see \cite{5-1}, \cite{5-2}, \cite{DPTT},
\cite{KT}, \cite{Tem1}, \cite{Tem2}, \cite{Tem3}, and \cite{Tem4}).
The first classical result
of such type was obtained in the 1930s by Marcinkiewicz and Marcinkiewicz-Zygmund
for discretization of the $L^p$-norms of the univariate trigonometric
polynomials
(see \cite{Zyg} or \cite[Theorem 1.3.6]{TemBook}).
That is why the described above problem of sampling discretization is also called
the
Marcinkiewicz-type discretization problem
(see \cite{Tem1} and \cite{Tem2}, where
this notion was introduced).

In this paper we take the probabilistic approach and assume that points
$X_1, \ldots, X_n$ are chosen randomly and independently
and distributed according to the measure $\mu$.
For any $B\subset L$ let
\begin{equation}\label{V-funct}
V_p(B):=\sup\limits_{f\in B}
\Bigl|\frac{1}{m}\sum\limits_{j=1}^m|f(X_j)|^p - \|f\|_p^p\Bigr|.
\end{equation}
If $B=B_p(L):=\{f\in L\colon \|f\|_p\le1\}$ and
one can show that for some $\varepsilon\in(0,1)$ and some number $m$
the bound
$V_p(B_p(L))\le \varepsilon$ holds with positive probability,
then one has
$$
(1-\varepsilon)\|f\|_p^p\le
\frac{1}{m}\sum\limits_{j=1}^m|f(X_j)|^p
\le (1+\varepsilon)\|f\|_p^p\quad
\forall f\in L.
$$
Note that by Chebyshev's inequality
$P\bigl(V_p(B)\le 2\mathbb{E}\bigl[V_p(B)\bigr]\bigr)\ge 2^{-1}$,
thus it is sufficient to provide good bounds for the expectation
$\mathbb{E}\bigl[V_p(B)\bigr]$. Here and further $\mathbb{E}$ denotes the expectation
of a random variable.

We note that in this formulation the problem is equivalent to the following
problem of approximation of one-dimensional marginals
of a random vector $\mathbf{u}$ by sampling. Let $\mathbf{u}$ be a random vector
in $\mathbb{R}^N$ endowed with some inner product $\langle\cdot, \cdot\rangle$
and let $K\subset \mathbb{R}^N$.
The problem is to understand how well one can approximate one-dimensional marginals of
$\mathbf{u}$ by sampling with high probability,
i.e. let $\mathbf{u}^1, \ldots, \mathbf{u}^m$
be $m$ independent copies of the vector $\mathbf{u}$ and
let
$$
U_p(K):=\sup\limits_{y\in K}
\Bigl|\frac{1}{m}\sum\limits_{j=1}^m|\langle y, \mathbf{u}^j\rangle|^p -
\mathbb{E}|\langle y, \mathbf{u}\rangle|^p\Bigr|.
$$
How many independent copies of $\mathbf{u}$ are needed to guarantee $U_p(K)\le\varepsilon$
with high probability?

On the one hand, for any fixed set $K\subset \mathbb{R}^N$
one can consider the set of functions
$$B:=\{f_y(\cdot)=\langle y,\cdot\rangle\colon y\in K\}\subset L^p(\mu),$$
where $\mu$ is the distribution of $\mathbf{u}$,
and obtain the equality $\mathbb{E}\bigl[V_p(B)\bigr] = \mathbb{E}\bigl[U_p(K)\bigr]$.
On the other hand, for any fixed inner product $\langle\cdot, \cdot \rangle$
on an $N$-dimensional subspace $L\subset L^p(\mu)$ and for any $B\subset L$
one can take the orthonormal basis $u_1, \ldots, u_N$
of this space $L$ with respect to this inner product
and consider i.i.d. random vectors 
$\mathbf{u}^j:=\bigl(u_1(X_j),\ldots, u_N(X_j)\bigr)$ in $\mathbb{R}^N$.
If one now take $K:=\{y=(y_1,\ldots, y_N)\in \mathbb{R}^N\colon y_1u_1+\ldots+y_Nu_N\in B\}$,
then $\mathbb{E}\bigl[U_p(K)\bigr]=\mathbb{E}\bigl[V_p(B)\bigr]$.
This problem of approximation of marginals has also been
extensively studied
(see \cite{ALPT-J}, \cite{GM}, \cite{GueRud}, \cite{Rud1}, \cite{Rud2}, \cite{Tikh},
\cite{Versh}, \cite{Versh2} and citations therein).

We note that the probabilistic approach may not provide the optimal result for the initial
problem of sampling discretization.
For example, this is the case when $p=2$. In recent paper \cite{LimTem},
the famous result of A.~Marcus, D.A. Spielman, N. Srivastava
from \cite{MSS} has been combined with the iteration procedure from \cite{NOU}
to show the following assertion.
There are positive constants $C_1, C_2, C_3$ such that
for any subspace $L\subset L^2(\mu)$,
in which there is an orthonormal basis $u_1, \ldots, u_N$
such that $|u_1(x)|^2+\ldots+|u_N(x)|^2\le M^2N$,
for any integer $m\ge C_3 N$ there are points $X_1, \ldots, X_M$ such that
$$
C_2\|f\|_2^2\le \frac{1}{m}\sum\limits_{j=1}^m|f(X_j)|^2\le C_3\|f\|_2^2\quad \forall f\in L.
$$
On the other hand, the probabilistic result of M. Rudelson
from \cite{Rud1}, applied in the case $p=2$
under the same assumption that $|u_1(x)|^2+\ldots+|u_N(x)|^2\le M^2N$
for some
orthonormal basis $u_1, \ldots, u_N$,
provides the discretization result (with high probability) only
for $m=O(N\log N)$ points. Moreover, it is known, that for general
distributions this additional $\log N$ factor cannot be removed
(see also the discussion in \cite{DPTT}, \cite{Tem1}, and \cite{Tem2}).

The assumption that
$|u_1(x)|^2+\ldots+|u_N(x)|^2\le M^2N$ for some constant $M>0$,
for some
orthonormal basis $u_1, \ldots, u_N$
is equivalent to the bound
$$
\|f\|_\infty\le M\sqrt{N}\|f\|_2\quad \forall f\in L
$$
and actually for every orthonormal basis in $L$ the initial bound is
true (see \cite[Proposition 2.1]{5-2}).
We also note that the constant $M$ cannot be less than~$1$,
which will be often used throughout the proofs without mentioning.
Lewis' change of density theorem (see \cite{Lew} or \cite{SZ}) implies that
one can always find a new measure $\nu$
such that the space $(L, \|\cdot\|_{L^p(\mu)})$ is linearly isometric
to some space
$(L', \|\cdot\|_{L^p(\nu)})$ and the space $L'$ already possesses
the desired orthonormal basis with $M=1$.
This observation is very useful when we study discretization with weights.

For a general $p\in[1,\infty)$ one can consider a similar general
assumption on the subspace $L\subset L^p(\mu)$:
for some $q\in[1, \infty)$ and for some constant $M>0$ one has
$$
\|f\|_\infty\le MN^{1/q}\|f\|_q\quad \forall f\in L.
$$
We call this type of assumption the $(\infty, q)$ Nikolskii-type inequality assumption
(with constant~$M$)
after S.M. Nikolskii who proved such inequalities
for multivariate trigonometric polynomials (see \cite{Nik} or \cite[Theorem 3.3.2]{TemBook}).
Our two main results
concerning sampling discretization under the Nikolskii-type inequality assumption
is collected in the following theorem
(see Corollary \ref{cor4} and Corollary~\ref{cor6}).

{\bf Theorem A.}{\it\
Let $p\in(1,\infty)$, $M\ge 1$, $\varepsilon\in(0, 1)$.
There is a positive constant $C:=C(M, p, \varepsilon)$ such that
for every $N$-dimensional subspace $L$ of $L^p(\mu)\cap C(\Omega)$, for which
$$
\|f\|_\infty\le MN^{\frac{1}{\max\{p,2\}}}\|f\|_{\max\{p,2\}}\quad \forall f\in L,
$$
for every integer
$m\ge CN[\log N]^{\max\{p,2\}}$
there are points $X_1,\ldots, X_m$ such that
$$
(1-\varepsilon)\|f\|_p^p\le\frac{1}{m}\sum\limits_{j=1}^m|f(X_j)|^p
\le (1+\varepsilon)\|f\|_p^p\quad \forall f\in L.
$$
}

This theorem improves the recently obtained results
from \cite{5-1} and \cite{5-2}, where
the
sampling discretization
was established for any $m\ge CN[\log N]^3$ points,
for any $p\in[1, 2)$ provided that
the $(\infty, 2)$ Nikolskii-type inequality holds (see \cite[Theorem 2.2]{5-2}).
We point out that our approach does not improve the estimate
for the number of discretizing points in the case $p=1$.
For any $p\in[1, \infty)$ the two cited papers provide
the following general conditional result (see \cite[Theorem 1.3]{5-1}).
Let $p\in[1, \infty)$ and let $L$ be an $N$-dimensional subspace of $L^p(\mu)\cap C(\Omega)$.
Assume that for the entropy numbers (see Definition \ref{D-ent})
of the unit ball $B_p(L):=\{f\in L\colon \|f\|_p\le 1\}$
with respect to the uniform norm $\|\cdot\|_\infty$
one has
\begin{equation}\label{EntNumbCond}
e_k(B_p(L), \|\cdot\|_\infty)\le MN^{1/p}2^{-k/p}\quad 0\le k\le \log N.
\end{equation}
Then for any integer $m\ge C(M, p, \varepsilon)N[\log N]^2$ there are points $X_1, \ldots, X_m$
such that
$$
(1-\varepsilon)\|f\|_p^p\le\frac{1}{m}\sum\limits_{j=1}^m|f(X_j)|^p
\le (1+\varepsilon)\|f\|_p^p\quad \forall f\in L.
$$
Paper \cite{5-2} then provides good bounds for the mentioned entropy numbers, but
only for $p\in[1, 2]$. Instead,
our approach uses bounds for the entropy numbers with respect to the
discretized uniform (semi)norm $\|f\|_{\infty, X}:=\max\limits_{1\le j\le m}|f(X_j)|$
for a discrete set of $m$ points $X:=\{X_1, \ldots, X_m\}$.
These bounds for $p\ge2$ are
known (see \cite[Lemma~16.5.4]{Tal} and \cite{Tal-Sect})
and for $p\in(1, 2)$ we deduce them in Appendix C
(the proof is similar to the proof of \cite[Proposition 16.8.6]{Tal}).
This allows to obtain the new result for $p>2$ under the
$(\infty, p)$ Nikolskii-type inequality assumption
and, for $p\in(1, 2)$, to improve the bound for the number of discretizing points
from \cite[Theorem 2.2]{5-2}.
We note that the $(\infty, p)$ Nikolskii-type inequality,
which is assumed in Theorem A for $p>2$,
provides an
estimate for the diameter of the unit ball $B_p(L)$
with respect to the uniform norm $\|\cdot\|_\infty$.
Thus, in place of the assumptions on all the entropy numbers,
Nikolskii-type inequality assumption restricts the behaviour
of only the first entropy number
$e_0(B_p(L), \|\cdot\|_\infty)$.
Therefore, our bound for $p>2$ is obtained
under the less restrictive assumptions
but provides a little worse dependence on the dimension
compared to the bound from \cite{5-1}
under the assumptions \eqref{EntNumbCond}.

The approach that we use is based on Talagrand's generic chaining technique (see \cite{Tal})
and combines the ideas from \cite{GueRud} on the symmetrization argument,
the new developments in chaining technique from \cite{VH}, and
some known bounds for the entropy numbers from \cite{Tal-Sect} and \cite{Tal-Emb},
which can also be found in the book~\cite{Tal}.
It should be mentioned that the chaining technique
has already been used in various works on sampling discretization
(see \cite{Tem1}, \cite{Tem2}, and \cite{Tem3}),
on learning theory (see \cite{KoTe} and \cite[Chapter 4]{TemBook2}),
and on the problem of approximation of one-dimensional marginals
(see \cite{Rud1}, \cite{Rud2}, \cite{GueRud}, and \cite{GMPT-J}),
and proved to be a powerful tool in these areas.

As it has already been mentioned above, the main results of the present paper are deduced from
several general estimates of the expectation $\mathbb{E}\bigl[V_p(B)\bigr]$
for a $\theta$-convex symmetric set $B\subset L$ (see Definition \ref{D-q-conv}).
The main technical result of our work is Theorem \ref{T0},
where bounds for the expectation $\mathbb{E}\bigl[V_p(B)\bigr]$
are obtained under a certain decay rate assumption on the entropy numbers
$e_k(B, \|\cdot\|_{\infty, X})$.
Then, using bounds for this entropy numbers (see Corollary \ref{ent-est} and Lemma \ref{ent-est-2}),
we obtain estimates on the
expectation $\mathbb{E}\bigl[V_p(B)\bigr]$ for general $\theta$-convex sets in $L$
and for the $L^p(\mu)$ unit balls $B_p(L)$.
In particular, we show (see Corollary \ref{cor3})
that for any symmetric $\theta$-convex body $B\subset L$
and for any
$p\in[\theta,\infty)$ one has
\begin{equation}\label{conv-bound}
\mathbb{E}\sup\limits_{f\in B}\Bigl|\frac{1}{m}\sum\limits_{j=1}^m|f(X_j)|^p - \|f\|_p^p\Bigr|
\le C\bigl(A +
A^{\frac{1}{\theta}}
(\sup\limits_{f\in B}\mathbb{E}|f(X_1)|^p)^{1-\frac{1}{\theta}}\bigr),
\end{equation}
where
$$
A= \frac{[\log m]^{\theta}}{m}
\mathbb{E}\bigl(\sup\limits_{f\in B}\max\limits_{1\le j\le m}|f(X_j)|^{p}\bigr).
$$
Since the ball $B_p(L)$ is $p$-convex when $p\ge 2$, this estimate
implies Theorem A for $p>2$.
The obtained bound is closely related to the theorem of O. Gu$\acute{e}$don and M. Rudelson
from \cite{GueRud}
which asserts (we formulate the result in our terms of functional spaces)
that for any
$\theta$-convex body $B\subset L$ contained in some Euclidean ball $D$
for any
$p\in[\theta,\infty)$ one has
\begin{equation}\label{GR-bound}
\mathbb{E}\bigl[V_p(B)\bigr]
\le C\bigl(A +
A^{1/2}
(\sup\limits_{f\in B}\mathbb{E}|f(X_1)|^p)^{1/2}\bigr),
\end{equation}
where
$$
A= \frac{[\log m]^{2(1-\frac{1}{\theta})}}{m}
\mathbb{E}\bigl(\sup\limits_{f\in D}\max\limits_{1\le j\le m}|f(X_j)|^2
\sup\limits_{h\in B}\max\limits_{1\le j\le m}|h(X_j)|^{p-2}\bigr).
$$
The approach of our paper based on R. Van Handel's Theorem \ref{T-VH}
allows us to improve the power
of logarithm in this result.
We prove (see Corollary \ref{cor1}) that in the same setting as above one actually has
the bound \eqref{GR-bound} with
$$
A=
\frac{1}{m}\mathbb{E}\bigl(\sup\limits_{f\in D}\max\limits_{1\le j\le m}|f(X_j)|^2
\sup\limits_{h\in B}\max\limits_{1\le j\le m}|h(X_j)|^{p-2}\bigr)
+
\frac{\log m}{m}\mathbb{E}\bigl(\sup\limits_{h\in B}
\max\limits_{1\le j\le m}|h(X_j)|^{p}\bigr).
$$
We note that the convexity parameter $\theta$ cannot be less than $2$
implying that $1\le 2(1-\frac{1}{\theta})$.
The assumption that $B$ is contained in some Euclidean ball
allows to use better bounds for the entropy numbers,
which reduces the power of logarithm compared to the estimate
\eqref{conv-bound}. The drawback is that we have to use
the quantity
$\sup\limits_{f\in D}\max\limits_{1\le j\le m}|f(X_j)|$ which in general is larger
than
$\sup\limits_{h\in B}\max\limits_{1\le j\le m}|h(X_j)|$.
When we consider $B=B_p(L)$ with $p\ge2$, we can take $D=B_2(L)$
and then $B\subset D$. Nevertheless, under the $(\infty, p)$
Nikolskii-type inequality assumption with constant $M$, we can only guarantee
the bound $\|f\|_\infty\le M^{p/2}\sqrt{N}\|f\|_2$
which implies that $A\le \frac{1}{m}M^{2p-2}N^{2-\frac{2}{p}} + \frac{\log m}{m}N$.
This means that even the application of our sharper version of
Gu$\acute{e}$don--Rudelson
bound
still implies only polynomial dependence of the number of discretizing points
on the dimension
for the
initial problem of sampling discretization
(under the $(\infty, p)$ Nikolskii-type inequality assumption).
Thus, we inclined to use
the estimate \eqref{conv-bound} to obtain almost linear dependence
from Theorem A.

We also mention that the obtained sharper version of the Gu$\acute{e}$don--Rudelson
bound \eqref{GR-bound} implies (see Corollary \ref{cor2}) that under the
$(\infty, 2)$ Nikolskii-type inequality
assumption with constant $1$ for any $p\ge2$ one has
$$
\mathbb{E}\bigl[V_p(B_p(L))\bigr]
\le C\Bigl(\frac{\log m}{m} N^{p/2} + \Bigl[\frac{\log m}{m} N^{p/2}\Bigr]^{1/2}\Bigr).
$$
Thus,
for any integer $m\ge c(\varepsilon, p) N^{p/2}\log N$
there are points $X_1, \ldots, X_m$
such that
$$
(1-\varepsilon)\|f\|_p^p\le \sum_{j=1}^{m}|f(X_j)|^p\le (1+\varepsilon)\|f\|_p^p\quad \forall f\in L.
$$
The combination (see Remark \ref{rem-weights}) of this observation with
Lewis' change of density theorem (see \cite{Lew} or \cite{SZ})
implies that for any $p\ge2$ and already for any $N$-dimensional subspace $L\subset L^p(\mu)$,
for any integer $m\ge c(\varepsilon, p) N^{p/2}\log N$
there are points $X_1, \ldots, X_m$
and positive numbers (weights) $\lambda_1,\ldots, \lambda_m$ such that
$$
(1-\varepsilon)\|f\|_p^p\le \sum_{j=1}^{m}\lambda_j|f(X_j)|^p\le
(1+\varepsilon)\|f\|_p^p\quad \forall f\in L.
$$
This gives a slightly different proof
for the theorem of  J. Bourgain, J. Lindenstrauss, and V. Milman
concerning good embeddings
of finite dimensional subspaces of $L^p[0, 1]$ into $\ell_p^m$ (see \cite[Theorem 7.3]{BLM}).
Their theorem asserts that for any $N$-dimensional subspace $L$ of $L^p[0, 1]$
there is an $N$-dimensional subspace $L'$ in $\ell_p^m$, with $m=c(\varepsilon, p)N^{p/2}\log N$,
at a Banach-Mazur distance not greater than $1+\varepsilon$ from $L$.
We note that the approach in \cite{BLM} is also probabilistic and
also uses empirical distributions.
The mentioned embedding problem
is closely related to our initial question concerning sampling discretization.
We note that in the case $p\in(1, 2)$ M. Talagrand managed to prove
(see \cite{Tal-Emb} or \cite[Theorem 16.8.1]{Tal}) that for an $N$-dimensional subspace $L$ of $L^p[0, 1]$
there is an $N$-dimensional subspace $L'$ in $\ell_p^m$, with $m=c(\varepsilon, p)N\log N[\log\log N]^2$,
at a Banach-Mazur distance not greater than $1+\varepsilon$ from $L$.
Our results imply (see Remark~\ref{rem-weights-2})
that for any number $p\in(1, 2)$ and for any $N$-dimensional subspace $L\subset L^p(\mu)$
for any integer $m\ge c(\varepsilon, p) N[\log N]^2$
there are points $X_1, \ldots, X_m$
and positive numbers (weights) $\lambda_1,\ldots, \lambda_m$ such that
$$
(1-\varepsilon)\|f\|_p^p\le \sum_{j=1}^{m}\lambda_j|f(X_j)|^p\le
(1+\varepsilon)\|f\|_p^p\quad \forall f\in L.
$$
Thus, it will be interesting to understand if it is possible
to reach (or even improve) Talagrand's bound for the dimension $m$
in the embedding problem
by means of the sampling discretization with weights.
More information concerning the embedding
problem can be found in the
expository paper by W.B.~Johnson and G.~Schechtman \cite{JS}.

We also obtain the analog of the Gu$\acute{e}$don--Rudelson
bound \eqref{GR-bound} when one assumes the inclusion of the $\theta$-convex set $B$
not in an Euclidean ball but in another $q$-convex body:
if $B\subset D\subset L$, where $B$ is $\theta$-convex
and $D$ is $q$-convex, then
for any $p\in[\max\{\theta, q\},\infty)$
one has
$$
\mathbb{E}\sup\limits_{f\in B}\Bigl|\frac{1}{m}\sum\limits_{j=1}^m|f(X_j)|^p - \|f\|_p^p\Bigr|
\le C\bigl(A + A^{\frac{1}{q}}
(\sup\limits_{f\in B}\mathbb{E}|f(X_1)|^p)^{1-\frac{1}{q}}\bigr),
$$
where
$$
A=
\frac{[\log m]^q}{m}\mathbb{E}\bigl(\sup\limits_{f\in D}\max\limits_{1\le j\le m}|f(X_j)|^q
\sup\limits_{h\in B}\max\limits_{1\le j\le m}|h(X_j)|^{p-q}\bigr)
+
\frac{\log m}{m}\mathbb{E}\bigl(\sup\limits_{h\in B}
\max\limits_{1\le j\le m}|h(X_j)|^{p}\bigr).
$$

Further the paper is organized as follows.
In the second section we recall the basic notions of the chaining technique,
formulate some extensions of the results from \cite{VH},
and give some technical
lemmas that are used further. In the third section
we obtain
bounds for the expectation of the random variable $V_p(B)$
for $\theta$-convex sets $B$
under the assumptions on the decay rate of the
entropy numbers of the set $B$
with respect to the discretized uniform norm
$\|f\|_{\infty, X}:=\max\limits_{1\le j\le1}|f(X_j)|$
for a fixed set of points $X:=\{X_1,\ldots, X_m\}$.
Finally, in the fourth section we prove the main results of
the paper concerning the sampling discretization in subspaces of $L^p(\mu)$
along with some general bounds for the expectation of $V_p(B)$
for $\theta$-convex sets $B$.
Appendices A and B contain the proofs of the extensions of the results from \cite{VH},
which we are using in the paper.
However, we note that they repeat the proofs from \cite{VH} almost word for word
and are presented here only for the readers' convenience.
In Appendix C we provide the bound for the entropy numbers
of the ball $B_p(L)$, $p\in(1, 2)$, with respect to the norm $\|\cdot\|_{\infty, X}$.

Throughout the paper the symbols $c, c_1, c_2, C, C_1, C_2, \ldots$
denote absolute constants whose values may vary from line to line.
Similarly, the symbols $c(a, b, \ldots), c_1(a, b, \ldots), c_2(a, b, \ldots),
C(a,b,\ldots)$, $C_1(a,b,\ldots)$, $C_2(a,b,\ldots), \ldots$
denote numbers whose values depend only on parameters $a,b,\ldots$,
and also may vary from line to line.
If the random variable $X$ has the distribution $\mu$, we write $\mathbb{E}_Xf(X)$ (or simply $\mathbb{E}f(X)$)
in place of the integral
$\displaystyle \int_\Omega f\, d\mu$.

\section{Generic chaining, van Handel's approach and auxiliary lemmas}

We recall the basic facts from the generic chaining theory (see \cite{Tal}).

Let $\varepsilon_f$ be a random process with $f\in(F, \varrho)$
where $\varrho$ is a quasi-metric on $F$, i.e.
it has all the properties of a metric but, in place of usual
triangle inequality, one has the following relaxed triangle inequality
\begin{equation}\label{triangle}
\varrho(f,g)\le R(\varrho(f,h)+\varrho(h,g))
\end{equation}
for some constant $R>0$ for all $f,g,h\in F$.
Assume that there are numbers $K>0$ and $\alpha>0$ such that
\begin{equation}\label{cond}
P(|\varepsilon_f-\varepsilon_g|\ge Kt^{1/\alpha}\varrho(f,g))\le  2e^{-t}
\end{equation}
for all $t>0$.

\begin{definition}
An admissible sequence of $F$ is an increasing sequence $(\mathcal{F}_k)$ of partitions of $F$
such that $|\mathcal{F}_k|\le 2^{2^k}$ for all $k\ge1$ and $|\mathcal{F}_0|=1$.
For $f\in F$ let $F_k(f)$ denote the unique element of $\mathcal{F}_k$ that contains $f$.
\end{definition}

\begin{definition}
Let $\alpha>0$ and $\theta\ge1$. Let
$$
\gamma_{\alpha,\theta}(F,\varrho):=
\Bigl(\inf\sup_{f\in F}\sum\limits_{k=0}^\infty
\bigl[2^{k/\alpha} {\rm diam}\bigl(F_k(f)\bigr)\bigr]^\theta\Bigr)^{1/\theta},
$$
where ${\rm diam}(G):=\sup\limits_{f,g\in G}\varrho(f,g)$
and where the infimum is taken over all admissible sequences of $F$ .
\end{definition}
The quantity $\gamma_{\alpha,\theta}(F,\varrho)$ is called the chaining functional.
If the metric $\varrho$ is induced by a norm $\|\cdot\|$, we will
also use the notation $\gamma_{\alpha,\theta}(F,\|\cdot\|)$ in place of $\gamma_{\alpha,\theta}(F,\varrho)$.

We need the following fundamental result (see \cite[Theorem 2.2.22]{Tal}).

\begin{theorem}\label{T-Tal}
Under the above assumptions \eqref{triangle} and \eqref{cond}
there is a number $C:=C(\alpha, K, R)$,
dependent only on the
parameters $\alpha, K, R$, such that
for any $f_0\in F$ one has
$$
\mathbb{E}\sup\limits_{f\in F}|\varepsilon_f-\varepsilon_{f_0}|
\le C\gamma_{\alpha,1}(F,\varrho).
$$
\end{theorem}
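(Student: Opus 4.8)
The plan is to run Talagrand's generic chaining argument directly from the tail hypothesis \eqref{cond}, reading off the constant from the exponents. I would first dispose of the trivial cases (if $\gamma_{\alpha,1}(F,\varrho)=\infty$ there is nothing to prove, and if $F$ is a point the statement is vacuous) and reduce to a finite set $F$: finiteness of $\gamma_{\alpha,1}(F,\varrho)$ forces ${\rm diam}(F_k(f))\to0$ along a near-optimal admissible sequence, which lets one work on a countable $\varrho$-dense subset of $F$ and control $\sup_{f\in F}|\varepsilon_f-\varepsilon_{f_0}|$ by the supremum over that subset. This is the one place where the relaxed triangle inequality \eqref{triangle}, hence the constant $R$, genuinely enters; for finite $F$ the bound will carry a constant depending only on $\alpha$ and $K$.

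So fix $f_0\in F$ with $F$ finite and choose an admissible sequence $(\mathcal{F}_k)_{k\ge0}$ with $S:=\sup_{f\in F}\sum_{k\ge0}2^{k/\alpha}{\rm diam}(F_k(f))\le2\gamma_{\alpha,1}(F,\varrho)$. For each $k$ pick a point $\pi_k(f)\in F_k(f)$, setting $\pi_0(f):=f_0$ (legitimate since $|\mathcal{F}_0|=1$, so $F_0(f)=F$). Because $F$ is finite, ${\rm diam}(F_k(f))\to0$ forces $F_k(f)=\{f\}$, hence $\pi_k(f)=f$, for all large $k$, so the telescoping identity $\varepsilon_f-\varepsilon_{f_0}=\sum_{k\ge1}\bigl(\varepsilon_{\pi_k(f)}-\varepsilon_{\pi_{k-1}(f)}\bigr)$ holds for every $f$. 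Two observations feed the union bound: since $\pi_k(f)$ and $\pi_{k-1}(f)$ both lie in $F_{k-1}(f)$, one has $\varrho(\pi_k(f),\pi_{k-1}(f))\le{\rm diam}(F_{k-1}(f))$; and the pairs $(\pi_k(f),\pi_{k-1}(f))$ that occur as $f$ ranges over $F$ number at most $|\mathcal{F}_k|\le2^{2^k}$.

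The heart of the matter is the union bound over scales. For each $k\ge1$ apply \eqref{cond} with $t=t_k:=u+2^{k+1}$, $u\ge1$, to each of the at most $2^{2^k}$ relevant pairs; since $2^{2^k}e^{-2^{k+1}}\le e^{-2^{k}}$, summing the failure probabilities over $k$ produces an absolute constant, so with probability at least $1-C_0e^{-u}$ one has simultaneously, for all $f$ and all $k\ge1$,
$$
\bigl|\varepsilon_{\pi_k(f)}-\varepsilon_{\pi_{k-1}(f)}\bigr|\le K\,t_k^{1/\alpha}\,{\rm diam}(F_{k-1}(f))\le c(\alpha)\,K\,\bigl(u^{1/\alpha}+2^{(k-1)/\alpha}\bigr)\,{\rm diam}(F_{k-1}(f)).
$$
Summing over $k\ge1$, reindexing, and using $\sum_j{\rm diam}(F_j(f))\le\sum_j2^{j/\alpha}{\rm diam}(F_j(f))\le S$ yields $\sup_{f\in F}|\varepsilon_f-\varepsilon_{f_0}|\le C_1(\alpha,K)\,u^{1/\alpha}S$ on that event. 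Integrating the tail $P\bigl(\sup_{f\in F}|\varepsilon_f-\varepsilon_{f_0}|\ge C_1u^{1/\alpha}S\bigr)\le C_0e^{-u}$ over $u\ge1$ (with the substitution $s=C_1u^{1/\alpha}S$) gives $\mathbb{E}\sup_{f\in F}|\varepsilon_f-\varepsilon_{f_0}|\le C(\alpha,K)\,S\le2C(\alpha,K)\,\gamma_{\alpha,1}(F,\varrho)$, and removing the finiteness assumption via the approximation from the first paragraph finishes the proof.

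I expect the main difficulty to be arithmetic rather than conceptual: choosing $t_k$ so that the partition size $2^{2^k}$ is beaten by the tail factor $e^{-t_k}$ with room to spare for the sum over $k$, and then converting the resulting deviation inequality into an expectation estimate with a clean constant. The genuinely delicate point is the reduction from finite $F$ to general $F$, where the relaxed triangle inequality is needed to keep the supremum well behaved; the chaining computation itself is routine once the exponents are arranged correctly.
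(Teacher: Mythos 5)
Your proposal is correct and is essentially the argument the paper itself relies on: the paper does not reprove Theorem \ref{T-Tal} but cites \cite[Theorem 2.2.22]{Tal} and notes that the generic chaining proof carries over to general $\alpha$ and to a quasi-metric, which is exactly the telescoping-plus-union-bound computation you carry out (with the correct choice $t_k=u+2^{k+1}$ beating the cardinality $2^{2^k}$ and the correct count of chain pairs via nestedness of the partitions). One minor remark: the quasi-triangle constant $R$ is in fact never needed in your argument, since the only distance estimate used is $\varrho(\pi_k(f),\pi_{k-1}(f))\le{\rm diam}\bigl(F_{k-1}(f)\bigr)$, which requires no triangle inequality, so the constant may be taken to depend on $\alpha$ and $K$ only.
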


We note that in \cite{Tal} the theorem is stated only for a metric $\varrho$
and in the case when $\alpha=2$, but Theorem \ref{T-Tal} can be proved essentially
repeating the argument from \cite{Tal}.

\begin{definition}\label{D-ent}
Recall the definition of the entropy numbers:
$$
e_k(F,\varrho):=
\inf\Bigl\{\varepsilon\colon \exists f_1,\ldots, f_{n_k}\in F\colon
F\subset \bigcup\limits_{j=1}^{n_k}B_\varepsilon(f_j)\Bigr\},
$$
where $n_k=2^{2^k}$ for $k\ge 1$ and $n_0=1$
and where
$B_\varepsilon(f):=\{g\colon \varrho(f,g)<\varepsilon\}$.
\end{definition}
If the metric $\varrho$ is induced by a norm $\|\cdot\|$, we will
also use the notation $e_k(F,\|\cdot\|)$ in place of $e_k(F,\varrho)$.
We note here that sometimes the other definition of the entropy numbers is used with
$2^k$ points in place of $2^{2^k}$.

We will also use the following property of the entropy numbers in an $N$-dimensional space
(see estimate (7.1.6) in \cite{TemBook} and Corollary 7.2.2 there).
Assume that $\varrho$ is induced by a norm $\|\cdot\|$. Then for $k>k_0$ one has
\begin{equation}\label{Eq1}
e_k(F,\|\cdot\|)\le 3\ 2^{2^{k_0}/N} e_{k_0}(F,\|\cdot\|) 2^{-2^k/N}.
\end{equation}

\begin{definition}\label{D-q-conv}
Let $L$ be a linear space endowed with a norm $\|\cdot\|$.
This norm is called
$q$-convex (with constant $\eta>0$) if
$$
\Bigl\|\frac{f+g}{2}\Bigr\|\le \max(\|f\|,\|g\|)-\eta\|f-g\|^q
$$
for any $f, g$ with $\|f\|\le1, \|g\|\le1$.

A symmetric convex body $D\subset L$ is called
$q$-convex (with constant $\eta>0$) if it is the unit ball
of some $q$-convex (with constant $\eta>0$) norm $\|\cdot\|$
on $L$, i.e. $D=\{f\in L\colon \|f\|\le 1\}$.
\end{definition}

We will use the following fundamental result from \cite{VH}.

\begin{theorem}\label{T-VH}
Let $q\ge2$, $p>1$, $\alpha>0$.
Let $L$ be a linear space and let $D\subset L$
be a symmetric $q$-convex (with constant $\eta>0$)
body.
Let $\varrho$ be a quasi-metric on $L$
such that
$$
\varrho(f,g)\le R\bigl(\varrho(f,h)+\varrho(h,g)\bigr);\quad \varrho\Bigl(f, \frac{f+g}{2}\Bigr)\le \varkappa \varrho(f,g)
$$
for all $f,g,h\in L$, for some constants $R,\varkappa>0$.
Assume that there is a metric $d$ on $L$
and for each $h\in L$ there is a norm $\|\cdot\|_h$ on $L$ such that
$$
c_1 d(f,g)^p\le \varrho(f,g)\le c_2\bigl(\|f-g\|_h+d(f,g)(d(f,h)^{p-1}+d(h,g)^{p-1})\bigr)
$$
for some numbers $c_1, c_2>0$.
Then there is a number $C:=C(q,p,\alpha, R,\varkappa, c_1, c_2)$
such that for any $B\subset D$ one has
$$
\gamma_{\alpha,1}(B, \varrho)\le
C\Bigl(
\eta^{-1/q}\Bigl[\sup\limits_{h\in B}
\sum\limits_{k=0}^\infty\bigl(2^{k/\alpha}e_k(D, \|\cdot\|_h)\bigr)^{\frac{q}{q-1}}\Bigr]^{\frac{q-1}{q}}
+
\bigl[\gamma_{\alpha p,p}(B, d)\bigr]^p
\Bigr).
$$
\end{theorem}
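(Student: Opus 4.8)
The plan is to combine Talagrand's chaining machinery (Theorem \ref{T-Tal} and the growth-functional technology behind $\gamma_{\alpha,1}$) with the $q$-convexity of $D$ in the manner of van Handel. The starting point is the observation that the quasi-metric $\varrho$ is, up to constants, dominated by a sum of two pieces: a "linear" part $\|f-g\|_h$ (which depends on a reference point $h$ but is a genuine norm) and a "higher-order" part controlled by $d$. Accordingly, one fixes an admissible sequence for $B$ that is economical simultaneously for the $d$-geometry and, locally, for the $\|\cdot\|_h$-geometry, and one bounds $\gamma_{\alpha,1}(B,\varrho)$ by summing the contributions of the two parts along the chain. The higher-order part is handled directly: by the relaxed triangle inequality and the second hypothesis $\varrho(f,\tfrac{f+g}{2})\le\varkappa\varrho(f,g)$, the diameters in the chain telescope, and the $d$-contribution of the tail is exactly what the functional $\gamma_{\alpha p,p}(B,d)$ measures — after raising to the $p$-th power, because $c_1 d(f,g)^p\le\varrho(f,g)$ means the relevant exponent on the $d$-side is $\alpha p$ with the $\ell^p$-aggregation, producing the term $[\gamma_{\alpha p,p}(B,d)]^p$.

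The heart of the argument is the treatment of the linear part $\|f-g\|_h$, and this is where $q$-convexity enters. The idea, following van Handel's refinement of the chaining technique, is that inside a $q$-convex body one can build the partitions so that each cell $F_k(f)$ is contained in a translate of a suitably shrunk copy of $D$, and the rate of shrinkage is governed by the entropy numbers $e_k(D,\|\cdot\|_h)$. One then runs a Talagrand-style recursive "growth condition" estimate in which the $q$-convexity constant $\eta$ controls how fast the functional contracts at each scale: the key inequality is the modulus-of-convexity bound $\|\tfrac{f+g}{2}\|\le\max(\|f\|,\|g\|)-\eta\|f-g\|^q$, which forces that if many points are far apart in $\|\cdot\|_h$ then they are already well separated inside $D$, so only few of them survive in each partition cell. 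Summing the geometric-type series that results from this recursion, with the entropy numbers plugged in at each level, yields precisely the $\ell^{q/(q-1)}$-aggregated quantity $\bigl[\sup_{h\in B}\sum_k (2^{k/\alpha}e_k(D,\|\cdot\|_h))^{q/(q-1)}\bigr]^{(q-1)/q}$, with the prefactor $\eta^{-1/q}$ coming from the $q$-th root of the convexity constant appearing once per scale and being summed.

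The main obstacle will be organizing a single admissible sequence that is (nearly) optimal for both parts at once, and carefully tracking how the reference point $h$ in $\|\cdot\|_h$ is allowed to vary along the chain while still being controlled by $\sup_{h\in B}$ of the entropy sum. Concretely, when one passes from cell to sub-cell one must re-center the norm at a new point of $B$, and one has to check that the hypotheses $c_1 d(f,g)^p\le\varrho(f,g)\le c_2(\|f-g\|_h+d(f,g)(d(f,h)^{p-1}+d(h,g)^{p-1}))$ interact correctly under this re-centering — in particular that the cross terms $d(f,g)d(f,h)^{p-1}$ are absorbed by a combination of the $\gamma_{\alpha p,p}(B,d)$ term and the diameter bounds already extracted. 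The remaining steps — verifying that $\varrho$ satisfies the increment condition \eqref{cond} needed for Theorem \ref{T-Tal} in order to transfer the $\gamma_{\alpha,1}$ bound to an expectation elsewhere in the paper, and collecting the constants into $C(q,p,\alpha,R,\varkappa,c_1,c_2)$ — are routine bookkeeping. Since the statement and its proof, as the introduction notes, follow \cite{VH} essentially line for line (with the quasi-metric generalization), I expect the actual proof in the paper to be deferred to an appendix and to consist largely of reproducing that argument with the constants adjusted for the factor $R$ in the relaxed triangle inequality.
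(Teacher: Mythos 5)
Your sketch correctly names the ingredients (chaining, $q$-convexity of $D$, the entropy numbers $e_k(D,\|\cdot\|_h)$, the $d$-part producing $[\gamma_{\alpha p,p}(B,d)]^p$) and correctly guesses that the proof follows \cite{VH} and lives in an appendix, but the mechanism you supply for the hard step is not the one that works, and as described it has a genuine gap. You propose to build an admissible partition of $B$ so that ``each cell $F_k(f)$ is contained in a translate of a suitably shrunk copy of $D$'' and then to run a Talagrand-style growth-condition count in which $q$-convexity limits how many $\|\cdot\|_h$-separated points survive in a cell. Nothing in the hypotheses forces the cells of an admissible sequence to sit inside shrunk copies of $D$ (they are merely subsets of $B\subset D$), and $\|\cdot\|$-separation of points does not by itself bound the cardinality of a cell; so this step, as stated, has no justification and does not produce the $\ell^{q/(q-1)}$-aggregation with the $\eta^{-1/q}$ prefactor.

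The missing idea is van Handel's interpolation scheme, which is what the paper actually uses (Appendix A). One introduces the $K$-functional $K(t,f)=\inf_{g\in L}(\|g\|+t\varrho(f,g))$, where $\|\cdot\|$ is the $q$-convex norm whose unit ball is $D$, and its near-minimizers $\pi_t(f)$. The object that lies in a shrunk copy of $D$ is not a partition cell but the set of interpolants $A_t=\{\pi_t(h):h\in A\}$: Lemma \ref{Lem-A} shows via the modulus-of-convexity inequality that ${\rm diam}(A_t,\|\cdot\|)\lesssim (t/\eta)^{1/q}({\rm diam}(A,\varrho)+\sup_h\varrho(h,\pi_t(h)))^{1/q}$. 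Covering this small $\|\cdot\|$-ball by rescaled $\|\cdot\|_{h}$-nets of $D$ gives, after Young's inequality $x^{1/q}y\le bx+b^{-1/(q-1)}y^{q/(q-1)}$, scale-by-scale entropy bounds of the form $e_k(A,\varrho)\lesssim b\,{\rm diam}(A,\varrho)+\sup_f s_k(f)$ with $s_k$ containing $(e_{k-1}(D,\|\cdot\|_f))^{q/(q-1)}$ and $({\rm diam}(F_{k-1}(f),d))^p$ (Lemma \ref{Lem-B}); this is exactly where the exponent $q/(q-1)$ is born. The contraction principle (Theorem \ref{Contraction}) then converts these into a bound on $\gamma_{\alpha,1}(B,\varrho)$ with the $b\,\gamma_{\alpha,1}$ term absorbed, the interpolation estimate (Theorem \ref{Interpolation}) controls $\sum_k 2^{k/\alpha}\varrho(f,\pi_{a2^{k/\alpha}}(f))$ by $a^{-1}$, and optimizing over $a$ yields the stated bound. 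Without the $K$-functional and the contraction principle, your recursion has no engine; with them, no growth-functional counting is needed at all.
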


The quasi-metric $\varrho$ in the above theorem can appear from the expressions of the following type
$$
\widetilde{\varrho}(f,g):=
\Bigl(\int\bigl||f|^p-|g|^p\bigr|^r\, d\nu\Bigr)^{1/r}=\||f|^p-|g|^p\|_{L^r(\nu)}
$$
for some positive (not necessarily probability) measure $\nu$, $p>1, r\in[1,\infty]$.
Indeed, set
\begin{multline*}
\varrho(f,g):=
  \||f-g|(|f|^{p-1}+|g|^{p-1})\|_{L^r(\nu)};
  \\
  \|f\|_h:=\||f||h|^{p-1}\|_{L^r(\nu)};
  \\
  d(f,g):=\||f-g|^p\|_{L^r(\nu)}^{1/p}=\|f-g\|_{L^{pr}(\nu)}.
\end{multline*}
It can be readily verified that $\widetilde{\varrho}(f,g)\le p\varrho(f,g)$.

\begin{lemma}\label{Lem}
For the quasi metric $\varrho$, metric $d$ and norms $\|\cdot\|_h$ defined above we have
$$
\varrho(f,g)\le C_1(p)\bigl(\varrho(f,h)+\varrho(h,g)\bigr);\quad
\varrho\Bigl(f, \frac{f+g}{2}\Bigr)\le \varrho(f,g);
$$
$$
C_2(p)d(f,g)^p\le \varrho(f,g)\le C_3(p)\bigl(\|f-g\|_h+d(f,g)(d(f,h)^{p-1}+d(h,g)^{p-1})\bigr)
$$
for some numbers $C_1(p), C_2(p), C_3(p)$, dependent only on $p>1$.
\end{lemma}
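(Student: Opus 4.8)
The plan is to reduce all four assertions to pointwise inequalities between the integrands and then invoke the triangle inequality in $L^r(\nu)$ and H\"older's inequality (both valid for $r\in[1,\infty]$). Throughout I will use three elementary facts, valid for $p>1$ and nonnegative reals $a,b$: the inequality $\max(a,b)^{p-1}\le a^{p-1}+b^{p-1}$; the quasi-subadditivity $(a+b)^{p-1}\le C_p(a^{p-1}+b^{p-1})$ with $C_p:=\max\{1,2^{p-2}\}$; and Young's inequality $ab^{p-1}\le a^p+b^p$.

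I would first dispatch the midpoint bound and the lower bound, since both are purely pointwise. For the midpoint bound, $f-\frac{f+g}{2}=\frac{f-g}{2}$ and $\bigl|\frac{f+g}{2}\bigr|\le\max(|f|,|g|)$, so $\bigl|\frac{f+g}{2}\bigr|^{p-1}\le|f|^{p-1}+|g|^{p-1}$ pointwise; hence $\bigl|f-\tfrac{f+g}{2}\bigr|\bigl(|f|^{p-1}+\bigl|\tfrac{f+g}{2}\bigr|^{p-1}\bigr)\le\tfrac12|f-g|\bigl(2|f|^{p-1}+|g|^{p-1}\bigr)\le|f-g|\bigl(|f|^{p-1}+|g|^{p-1}\bigr)$, and taking $L^r(\nu)$ norms gives $\varrho(f,\frac{f+g}{2})\le\varrho(f,g)$. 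For the lower bound, $|f-g|\le 2\max(|f|,|g|)$ yields $|f|^{p-1}+|g|^{p-1}\ge\max(|f|,|g|)^{p-1}\ge 2^{-(p-1)}|f-g|^{p-1}$, hence $|f-g|\bigl(|f|^{p-1}+|g|^{p-1}\bigr)\ge 2^{-(p-1)}|f-g|^p$; taking $L^r(\nu)$ norms gives $\varrho(f,g)\ge 2^{-(p-1)}d(f,g)^p$, so $C_2(p)=2^{-(p-1)}$.

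For the upper bound I would insert $h$ via $|f|^{p-1}\le C_p(|f-h|^{p-1}+|h|^{p-1})$ and the analogous estimate for $|g|^{p-1}$, obtaining pointwise $|f-g|\bigl(|f|^{p-1}+|g|^{p-1}\bigr)\le C_p|f-g|\bigl(|f-h|^{p-1}+|g-h|^{p-1}\bigr)+2C_p|f-g||h|^{p-1}$. The $L^r(\nu)$ norm of the last summand equals $2C_p\|f-g\|_h$ by definition of $\|\cdot\|_h$, while for each of the first two summands H\"older's inequality in $L^r(\nu)$ with exponents $p$ and $p/(p-1)$ gives $\bigl\||f-g||f-h|^{p-1}\bigr\|_{L^r(\nu)}\le\|f-g\|_{L^{pr}(\nu)}\|f-h\|_{L^{pr}(\nu)}^{p-1}=d(f,g)d(f,h)^{p-1}$, and symmetrically with $h,g$; summing produces the bound with $C_3(p)=2C_p$.

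Finally, for the relaxed triangle inequality I would expand $|f-g|\le|f-h|+|h-g|$ so that $|f-g|(|f|^{p-1}+|g|^{p-1})$ becomes a sum of four products. The two diagonal ones obey $|f-h||f|^{p-1}\le|f-h|(|f|^{p-1}+|h|^{p-1})$ and $|h-g||g|^{p-1}\le|h-g|(|g|^{p-1}+|h|^{p-1})$, so their $L^r(\nu)$ norms are bounded by $\varrho(f,h)$ and $\varrho(h,g)$. For a cross term such as $|f-h||g|^{p-1}$ I would bound $|g|^{p-1}\le C_p(|g-h|^{p-1}+|h|^{p-1})$: the $|h|^{p-1}$-part contributes at most $C_p\varrho(f,h)$, while Young's inequality gives $|f-h||g-h|^{p-1}\le|f-h|^p+|g-h|^p$, whose $L^r(\nu)$ norm is $d(f,h)^p+d(g,h)^p\le 2^{p-1}\bigl(\varrho(f,h)+\varrho(h,g)\bigr)$ by the lower bound already proved; the remaining cross term $|h-g||f|^{p-1}$ is treated identically. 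Collecting the contributions yields $C_1(p)$. I expect the cross terms to be the only delicate point — there $g$ (respectively $f$) need not be close to $h$, and the device that makes the estimate work is precisely to peel off the stray $|g-h|^{p-1}$ (respectively $|f-h|^{p-1}$) factor into a $d$-term via Young's inequality and then convert it back to $\varrho$-terms through the lower bound $C_2(p)d(\cdot,\cdot)^p\le\varrho(\cdot,\cdot)$.
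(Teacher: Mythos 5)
Your proof is correct and follows essentially the same route as the paper's: the same pointwise decompositions, H\"older's inequality in $L^r(\nu)$ for the upper bound, and Young's inequality to absorb the cross terms $|f-h||h-g|^{p-1}$ back into $\varrho$-terms via the lower bound $C_2(p)d(\cdot,\cdot)^p\le\varrho(\cdot,\cdot)$. The only (immaterial) difference is that the paper performs this last conversion pointwise before taking norms, whereas you pass through $d(f,h)^p+d(g,h)^p$ at the norm level.
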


\begin{proof}
We note that
$$
(|f|+|g|)^{p-1}\le 2^{p-1}\max\{|f|^{p-1}, |g|^{p-1}\}\le 2^{p-1}(|f|^{p-1}+|g|^{p-1})
$$ for $p>1$.
Thus,
\begin{multline*}
2^{1-p}|f-g|^p\le |f-g|(|f|^{p-1}+|g|^{p-1})=|f-g|(|f-h+h|^{p-1}+|g-h+h|^{p-1}
\\
\le 2^{p-1}|f-g|(|f-h|^{p-1}+|h|^{p-1}+|g-h|^{p-1}+|h|^{p-1})
\\
\le
2^p(|f-g||h|^{p-1} + |f-g||f-h|^{p-1}+|f-g||g-h|^{p-1})
\end{multline*}
implying, by triangle and H$\ddot{o}$lder's inequalities, the estimates
$$
2^{1-p}d(f,g)^p
\le \varrho(f,g)
\le
2^{p}\bigl(\|f-g\|_h + d(f,g)(d(f,h)^{p-1}+d(g,h)^{p-1})\bigr).
$$
Next,
\begin{multline*}
\varrho\Bigl(f, \frac{f+g}{2}\Bigr)
=
2^{-1}\Bigl\||f-g|\Bigl(|f|^{p-1}+\Bigl|\frac{f+g}{2}\Bigr|^{p-1}\Bigr)\Bigr\|_r
\\
\le
2^{-1}\||f-g|(|f|^{p-1}+|f|^{p-1}+|g|^{p-1})\|_r
\le
\varrho(f,g).
\end{multline*}
Finally,
\begin{multline*}
|f-g|(|f|^{p-1}+|g|^{p-1})
\\
\le
2^{p-1}\bigl(|f-h|(|f|^{p-1}+|h|^{p-1}+|h-g|^{p-1})+|h-g|(|f-h|^{p-1}+|h|^{p-1}+|g|^{p-1})\bigr)
\\
=
2^{p-1}\bigl(|f-h|(|f|^{p-1}+|h|^{p-1}) + |h-g|(|h|^{p-1}+|g|^{p-1})
\\
+
|f-h||h-g|^{p-1} + |h-g||f-h|^{p-1}\bigr).
\end{multline*}
We now note that for any positive numbers $a, b$ by
Young's inequality one has $ab^{p-1}\le a^p+b^p$.
Thus,
\begin{multline*}
|f-h||h-g|^{p-1} + |h-g||f-h|^{p-1}\le 2(|f-h|^p+|h-g|^p)
\\
\le 2^p\bigl(|f-h|(|f|^{p-1}+|h|^{p-1}) +
|h-g|(|h|^{p-1}+|g|^{p-1})\bigr)
\end{multline*}
and
$$
\varrho(f,g)
\le 4^p\bigl(\varrho(f,h)+\varrho(h,g)\bigr).
$$
The lemma is proved.
\end{proof}

\begin{remark}{\rm
We note that in \cite{VH} only a special case of Theorem \ref{T-VH}
was considered (see Theorem 7.3 there), but the proof of Theorem \ref{T-VH}
repeats the argument there almost verbatim.
We will provide the details in Appendix A for the readers' convenience.
}
\end{remark}

We need the following bound (see \cite[Theorem 4.1.4]{Tal} and \cite[Theorem~5.8]{VH}).

\begin{theorem}\label{T-Conv}
Let $B$ be a symmetric $q$-convex (with constant $\eta$)
body in some linear space $L$ and
let $\|\cdot\|$ be a norm on $L$.
Then
for any $\alpha>0$ there is a number $C(\alpha, q)>0$
such that
$$
\gamma_{\alpha,q}(B,\|\cdot\|)\le C(\alpha, q) \eta^{-1/q}
\sup\limits_{k\ge0}2^{k/\alpha}e_k(B,\|\cdot\|).
$$
\end{theorem}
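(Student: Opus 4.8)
The plan is to prove Theorem \ref{T-Conv} by Talagrand's growth-functional method, in the streamlined form of van Handel used in \cite{VH}, the whole argument resting on the fact that $q$-convexity of the gauge of $B$ forces averages of well-separated families of points of $B$ to lie at a definite depth inside $B$. First I would normalize: $\gamma_{\alpha,q}(B,\|\cdot\|)$ and $\sup_{k\ge0}2^{k/\alpha}e_k(B,\|\cdot\|)$ are both positively homogeneous of degree one in $\|\cdot\|$, while the convexity constant $\eta$ of the gauge $\|\cdot\|_B$ of $B$ does not involve $\|\cdot\|$ at all; so, replacing $\|\cdot\|$ by $\|\cdot\|/s$ with $s:=\sup_{k\ge0}2^{k/\alpha}e_k(B,\|\cdot\|)$, I may assume $e_k(B,\|\cdot\|)\le 2^{-k/\alpha}$ for all $k\ge0$, in particular ${\rm diam}(B,\|\cdot\|)\le 2$. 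It then suffices to build an admissible sequence $(\mathcal{F}_k)_{k\ge0}$ of $B$ with
$$
\sup_{f\in B}\ \sum_{k=0}^{\infty}\bigl[2^{k/\alpha}\,{\rm diam}\bigl(F_k(f)\bigr)\bigr]^{q}\ \le\ C(\alpha,q)\,\eta^{-1},
$$
since this gives $\gamma_{\alpha,q}(B,\|\cdot\|)\le C(\alpha,q)^{1/q}\eta^{-1/q}$ and the theorem follows after undoing the normalization.

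Next I would record the convexity input in a usable form: iterating the two-point inequality of Definition \ref{D-q-conv} in the Pisier--Maurey manner yields a constant $\eta'=c(q)\eta$ with $\tfrac12\|f\|_B^{q}+\tfrac12\|g\|_B^{q}-\|\tfrac{f+g}{2}\|_B^{q}\ge \eta'\|f-g\|_B^{q}$ for $f,g\in B$, and, more to the point, the barycenter of points $x_1,\dots,x_m\in B$ pairwise at $\|\cdot\|_B$-distance $\ge a$ has $\|\cdot\|_B$-norm at most $1-c(q)\eta a^{q}$. Geometrically, a thin shell near the boundary of $B$ carries only few such separated points, and descending into a "deep" part of $B$ means one may replace $B$ by a homothet $rB$ with $r$ strictly smaller.

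With this in hand I would apply the general partitioning scheme (a family of functionals obeying a growth condition yields an admissible sequence), taking for $F_n$ a shifted version of the quantity above attached to a cell $A$ of the current partition together with an enclosing homothet $x_0+rB$: for $A\subset x_0+rB$ let $F_n(A)$ be the infimum, over admissible sequences of $A$ and over enclosing homothets, of $\sup_f\sum_{k\ge0}[2^{(n+k)/\alpha}{\rm diam}(A_k(f))]^{q}$, so that $F_0(B)$ is exactly the quantity to be bounded and $F_n$ is monotone in $A$. The growth condition one must verify reads: if $A\subset x_0+rB$ contains pieces $A_1,\dots,A_m$ pairwise $\ge a\,2^{-n/\alpha}$-separated in $\|\cdot\|$ with $m$ of order $2^{2^{n}}$, then — using $e_j(x_0+rB,\|\cdot\|)=r\,e_j(B,\|\cdot\|)\le r2^{-j/\alpha}$ to locate the separated points and the shell estimate — either $a$ is bounded by an absolute constant, or each $A_i$ re-encloses in $x_i+r_iB$ with $r_i\le r(1-c(q)\eta a^{q})$ and $F_n(A)\ge \min_{i}F_{n+1}(A_i)+c(\alpha,q)\eta a^{q}$. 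Since the depth $1-r$ cannot exceed $1$, summing these gains along any chain yields $C(\alpha,q)\eta^{-1}$ with the correct $q$-th power, each level carrying a large-diameter cell costing a geometric factor in $r^{q}$ while gaining a term of order $\eta a^{q}$.

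The hard part is the growth condition: converting "the cell is covered by few $\|\cdot\|$-balls of radius $r2^{-n/\alpha}$" into "the cell, up to a controlled loss, lies inside a homothet $x_i+r_iB$ with $r_i$ a definite factor below $r$". Since $\|\cdot\|$ and $\|\cdot\|_B$ are a priori unrelated, a small $\|\cdot\|$-ball around a point of $B$ need not be a small $\|\cdot\|_B$-homothet, so one has to work with the correct localized body, and it is here — and in matching the exponents $2^{k/\alpha}$, the $q$-th power, and the single power of $\eta^{-1}$ — that $q$-convexity is genuinely used and where the bookkeeping is most delicate. An alternative that sidesteps the explicit partitioning is to invoke van Handel's interpolation description of $\gamma_{\alpha,q}$ for convex bodies (\cite[Theorem~5.8]{VH}), in which the same convexity input appears as a $K$-functional estimate and the combinatorics above is replaced by an interpolation inequality; proving that inequality is then the step to get right.
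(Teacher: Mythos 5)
This statement is not proved in the paper at all: it is imported verbatim from \cite[Theorem 4.1.4]{Tal} and \cite[Theorem 5.8]{VH}, and the only related argument the paper actually writes out is Appendix B, where the companion result (Theorem \ref{T-Conv-2}) is obtained by the interpolation route --- the $K$-functional $K(t,f)=\inf_g(\|g\|_B+t^p\|f-g\|^p)$, the contraction principle (Theorem \ref{Contraction}), the diameter-of-minimizers lemma (Lemma \ref{Lem-C}), and van Handel's Lemma \ref{Lem-3}. Your primary plan is the other known route, Talagrand's partitioning scheme with a depth functional, which you outline with the correct normalization and the correct exponent bookkeeping (total depth gain at most $1$, each separated family contributing $a^q$ to the chaining sum and $c\,\eta a^q$ to the depth, whence the single factor $\eta^{-1}$ on the $q$-th power). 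The interpolation route you mention only as a fallback is in fact the one the paper is set up to exploit; its advantage is that the entire convexity input is isolated in one lemma of the type of Lemma \ref{Lem-A}/\ref{Lem-C}, and the same machinery then yields Theorem \ref{T-Conv-2} with no extra work.

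Two substantive comments on your sketch. First, as written it is not a proof: the growth-condition verification, which you correctly identify as carrying all the content, is described but not carried out, and a referee would not accept ``the bookkeeping is most delicate'' in place of that verification. Second, the specific obstacle you single out --- that $\|\cdot\|$ and $\|\cdot\|_B$ are a priori unrelated, so $\|\cdot\|$-separation of points need not give $\|\cdot\|_B$-separation --- is in fact dissolved by the normalization you already performed. Once $\sup_{k\ge0}2^{k/\alpha}e_k(B,\|\cdot\|)=1$, one has ${\rm diam}(B,\|\cdot\|)\le 2e_0(B,\|\cdot\|)\le 2$, and by symmetry of $B$ every $h$ satisfies $\|h\|=\|h\|_B\,\|h/\|h\|_B\|\le\|h\|_B\sup_{b\in B}\|b\|\le\|h\|_B\cdot\tfrac12{\rm diam}(B,\|\cdot\|)\le\|h\|_B$; so $\|\cdot\|$-separated points of $B$ are automatically $\|\cdot\|_B$-separated at the same scale (and the same holds inside any homothet $x_0+rB$ after rescaling), after which the two-point inequality of Definition \ref{D-q-conv} applies directly --- no Pisier--Maurey iteration and no $m$-point barycenter statement is needed, since a single pair already pushes the midpoint to depth $\eta a^q$. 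With that observation inserted, your partitioning argument closes; without it, the proposal remains a plan rather than a proof.
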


We also need the following extension of the above result.

\begin{theorem}\label{T-Conv-2}
Let $B$ be a symmetric $q$-convex (with constant $\eta$)
body in some linear space $L$ and
let $\|\cdot\|$ be a norm on $L$.
Then
for any $\alpha>0$ and for any $p\in[1,q)$
there is a number $C(\alpha,p,q)>0$
such that
$$
\gamma_{\alpha,p}(B, \|\cdot\|)\le
C(\alpha,p,q)\eta^{-p/q}\Bigl(\sum_{k\ge0}(2^{k/\alpha}e_k(B, \|\cdot\|))^{\frac{pq}{q-p}}
\Bigr)^{\frac{q-p}{pq}}.
$$
\end{theorem}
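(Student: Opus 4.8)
I would first try to reduce the statement to Theorem~\ref{T-Conv}, the case $p=q$, by \emph{snowflaking}. Since $p/q\le1$, the map $d(f,g):=\|f-g\|^{p/q}$ is a genuine translation-invariant metric, and a term-by-term comparison of diameters gives the identities $\gamma_{\alpha,p}(B,\|\cdot\|)=\gamma_{\alpha q/p,\,q}(B,d)^{q/p}$ and $e_k(B,d)=e_k(B,\|\cdot\|)^{p/q}$ for all $k\ge0$. The proof of Theorem~\ref{T-Conv} — like that of Theorem~\ref{T-VH}, which is already stated for a general quasi-metric — does not use homogeneity of the measuring distance, only the triangle inequality, so it applies to $d$; substituting and raising to the power $q/p$ yields $\gamma_{\alpha,p}(B,\|\cdot\|)\le C(\alpha,p,q)\,\eta^{-1/p}\sup_{k\ge0}2^{k/\alpha}e_k(B,\|\cdot\|)$. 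This is clean but has $\sup_k$ in place of the $\ell^{pq/(q-p)}$-norm (and $\eta^{-1/p}$ in place of $\eta^{-p/q}$); since the paper's applications need the $\ell^{pq/(q-p)}$-form to keep the power of $\log$ sharp, I would not stop here, though this reduction is a useful check on the shape of the answer.

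For the stated inequality I would re-run the proof of Theorem~\ref{T-Conv} itself — Talagrand's partitioning/growth-condition scheme for $q$-convex bodies (\cite[Chapters~2 and~4]{Tal}), in the streamlined form of \cite{VH} — keeping its geometric core intact and changing only the choice of functional and the final accounting. The geometric input is unchanged: in a $q$-convex body the midpoint of two $\delta$-separated points lies in $(1-\eta\delta^q)B$, which bounds the number of $(\delta/2)$-separated points in any subset of $B$ of diameter of order $\delta$, hence permits an admissible sequence $(\mathcal F_k)$ whose cell diameters contract geometrically while $|\mathcal F_k|\le 2^{2^k}$. What changes is that one runs the growth condition of \cite[Section~2.3]{Tal} with the target exponent $p$ in the functional in place of $q$, while still feeding it the covering estimate coming from the $q$-convexity of $B$. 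This mismatch — exponent $p$ in the functional, exponent $q$ in the convexity estimate — is what produces the $\ell^{pq/(q-p)}$-norm on the right: writing $r=\tfrac{pq}{q-p}$, one has $\tfrac1p=\tfrac1q+\tfrac1r$, so that a H\"older inequality with the exponents dictated by this identity separates the scheme's accumulation ("pay $\eta^{-1/q}$ per covering step, record $p$-th powers") into the factor $\bigl(\sum_k(2^{k/\alpha}e_k(B,\|\cdot\|))^{r}\bigr)^{1/r}$ and a power of $\eta^{-1/q}$ equal to $\eta^{-p/q}$.

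The main obstacle is that this amounts to re-proving Theorem~\ref{T-Conv} with an extra parameter rather than invoking it: the $\sup_k$-control stated there is genuinely too weak — an $\ell^p$-sum of $\ell^q$-summable, uniformly bounded increments need not converge — so there is no shortcut, and one must verify the modified growth condition directly, balancing the $p$-th-power functional against the $q$-convex separation estimate so that the exponents combine as above. I expect this to be the only delicate step; constructing the admissible sequence around it and carrying out the final geometric-series estimates are exactly as in \cite{Tal},\cite{VH}. If the growth-condition bookkeeping turns out awkward, a fallback is to take common refinements of the admissible sequence furnished by the snowflaking argument of the first paragraph with the elementary Dudley-type sequence satisfying $\sup_f\sum_k(2^{k/\alpha}{\rm diam}(F_k(f)))^p\le C(\alpha,p)\sum_k(2^{k/\alpha}e_k)^p$ and apply H\"older; this already yields a bound of the form $\eta^{-1/q}(\sup_k2^{k/\alpha}e_k)^{p/q}\bigl(\sum_k(2^{k/\alpha}e_k)^p\bigr)^{(q-p)/(pq)}$, which differs from the stated one only in the power of $\eta$ and in the slack of the Lyapunov interpolation inequality relating it to $\bigl(\sum_k(2^{k/\alpha}e_k)^r\bigr)^{1/r}$, and may already be enough for the applications.
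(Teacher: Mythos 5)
Your second paragraph is essentially the paper's proof (Appendix~B), which follows van Handel's Theorem~5.8 with the functional run at exponent $p$ while the $q$-convexity of $B$ still governs the geometry: the mismatch is resolved by a Young/H\"older step with $\tfrac1p=\tfrac1q+\tfrac{q-p}{pq}$, exactly as you predict, and this is what produces the $\ell^{pq/(q-p)}$-norm of the entropy numbers and the constant $\eta^{-p/q}$. Concretely, the ingredients you would need to supply for the ``delicate step'' you defer are: (i) the interpolation functional $K(t,f)=\inf_g\bigl(\|g\|_B+t^p\|f-g\|^p\bigr)$ with minimizers $\pi_t(f)$ and van Handel's bound $\sup_{f\in B}\sum_k\bigl(2^{k/\alpha}\|f-\pi_{a2^{k/\alpha}}(f)\|\bigr)^p\le c(\alpha)a^{-p}$ (Lemma~\ref{Lem-3}); (ii) the $q$-convexity estimate ${\rm diam}(A_t,\|\cdot\|_B)\le c\,(t/\eta)^{p/q}\bigl({\rm diam}(A,\|\cdot\|)+\sup_h\|h-\pi_t(h)\|\bigr)^{p/q}$ (Lemma~\ref{Lem-C}); (iii) the elementary inequality $x^{p/q}y\le bx+b^{-p/(q-p)}y^{q/(q-p)}$ to split the resulting entropy bound, followed by the contraction principle (Theorem~\ref{Contraction}) applied to $\gamma_{\alpha,p}$ and an optimization over the scale $a$, which is where the exponents $\eta^{-p/q}$ and $\tfrac{q-p}{pq}$ actually appear. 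So the plan is the right one, but the step you call delicate is the entire content of the proof, not a routine verification.

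Two smaller caveats. Your snowflaking reduction in the first paragraph overstates what Theorem~\ref{T-Conv} gives for the metric $d=\|\cdot\|^{p/q}$: its proof does use homogeneity of the measuring norm, in the step where a set contained in $r\,B+x$ has entropy numbers at most $r\,e_k(B,\|\cdot\|)$; for the snowflake this scaling acquires the exponent $p/q$, which perturbs the bookkeeping, so the identity-plus-substitution argument is not quite a black-box application (you only use it as a sanity check, so this is harmless). Second, your fallback bound $\eta^{-1/q}\bigl(\sup_k2^{k/\alpha}e_k\bigr)^{p/q}\bigl(\sum_k(2^{k/\alpha}e_k)^p\bigr)^{(q-p)/(pq)}$ would in fact suffice for the paper's application in Lemma~\ref{L1}(3) (both factors are controlled by the same $W_B(X)$ and $\log m$ data there), but deriving it by taking common refinements of two admissible sequences and interpolating the diameters termwise requires a H\"older step whose exponents degenerate if one insists on recovering exactly the $\ell^p$-sum of the Dudley sequence; this route also needs to be checked rather than asserted.
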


The proof again repeats the argument from \cite[Theorem 5.8]{VH} almost verbatim.
We present the proof in Appendix B for the readers' convenience.

Finally, we will use the following technical bound.
\begin{lemma}\label{L-bound}
Let $a, b>0$. Then there is a number $C(a,b)>0$ such that
$$
\sum\limits_{k\ge \log m}(2^{a k}2^{-2^k/m})^b
\le C(a,b) m^{ab}\quad \forall m\ge2.
$$
\end{lemma}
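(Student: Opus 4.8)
The plan is to isolate the factor $m^{ab}$ by re-indexing the tail sum. Set $k_0 := \lceil \log m \rceil$, so that the sum over integers $k \ge \log m$ is exactly the sum over $k \ge k_0$, and write $k = k_0 + j$ with $j \ge 0$. The point is that after this substitution every summand splits into $m^{ab}$ times a term of the form $2^{(aj - 2^j)b}$, and the resulting series in $j$ converges to a constant depending only on $a$ and $b$ because $2^j$ eventually dominates any linear function of $j$.

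Concretely, I would first record the two elementary consequences of the choice of $k_0$. Since $k_0 \ge \log m$ we have $2^{k_0} \ge m$, hence for $k = k_0 + j$ with $j \ge 0$,
$$\frac{2^k}{m} = \frac{2^{k_0}}{m}\, 2^{j} \ge 2^{j}, \qquad \text{so} \qquad 2^{-2^{k}/m} \le 2^{-2^{j}}.$$
Since $k_0 < \log m + 1$ we have $2^{k_0} < 2m$, hence $2^{ak} = 2^{a k_0} 2^{aj} < (2m)^{a} 2^{aj} = 2^{a} m^{a} 2^{aj}$. Combining these two bounds gives, for every $j \ge 0$,
$$\bigl(2^{ak} 2^{-2^{k}/m}\bigr)^{b} \le \bigl(2^{a} m^{a} 2^{aj} 2^{-2^{j}}\bigr)^{b} = 2^{ab}\, m^{ab}\, 2^{(aj - 2^{j})b}.$$

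Summing over $k \ge \log m$, i.e. over $j \ge 0$, then yields
$$\sum_{k \ge \log m} \bigl(2^{ak} 2^{-2^{k}/m}\bigr)^{b} \le 2^{ab}\, m^{ab} \sum_{j=0}^{\infty} 2^{(aj - 2^{j})b} =: C(a,b)\, m^{ab},$$
and the series $\sum_{j \ge 0} 2^{(aj - 2^{j})b}$ is a finite constant depending only on $a$ and $b$, since its general term decays super-exponentially (the exponent $(aj - 2^{j})b \to -\infty$). This is the claimed inequality. There is no real obstacle here; the only point that needs a touch of care is the passage from the real threshold $\log m$ to the integer threshold $k_0$, which is handled by the two-sided estimate $m \le 2^{k_0} < 2m$ used above, and the hypothesis $m \ge 2$ is used only to ensure $\log m > 0$ so that this threshold is well behaved.
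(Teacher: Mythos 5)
Your proof is correct and follows essentially the same route as the paper: both arguments shift the summation index by $\log m$ to factor out $m^{ab}$ and reduce the tail to an $m$-independent convergent series dominated by the super-exponential decay of $2^{-2^{j}b}$. The only (harmless) difference is that you round to the integer threshold $k_0=\lceil \log m\rceil$ and sum $\sum_{j\ge 0}2^{(aj-2^{j})b}$ directly, whereas the paper performs the exact substitution $x=k-\log m$ and invokes the elementary bound $ax-2^{x}\le -x+c(a)$ to compare with a geometric series.
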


\begin{proof}
Note that
$$
m^{-ab}\sum\limits_{k\ge \log N}(2^{a k-2^k/m})^b
=
\sum\limits_{k\ge \log m}(2^{a (k-\log m)-2^{k-\log m}})^b.
$$
There is a number $c(a)>0$ such that $a x- 2^x\le -x + c(a)$ for any $x>0$.
Thus, the last expression is estimated by
$$
2^{bc(a)}\sum\limits_{k\ge \log m}(2^{-(k-\log m)})^b\le C(a,b).
$$
The lemma is proved.
\end{proof}

\section{Discretization under the entropy numbers decay rate assumption}

Let $X_1, \ldots, X_m$ be independent identically distributed random variables
and
let $B$ be a set of functions. We consider the following random variables:
$$
V_p(B):=
\sup\limits_{f\in B}\Bigl|\frac{1}{m}\sum\limits_{j=1}^m|f(X_j)|^p - \|f\|_p^p\Bigr|,\quad
R_p(f) = \sum\limits_{j=1}^m|f(X_j)|^p
$$
In this section we provide conditional bounds for the expectation $\mathbb{E}\bigl[V_p(B)\bigr]$
under the assumptions on the decay rate of
the entropy numbers of the set $B$ with respect to some discretized uniform norm.

Following the ideas of O.~Gu$\acute{e}$don and M.~Rudelson from \cite{GueRud}
we start with the following symmetrization argument.

\begin{lemma}\label{Lem-1}
Assume that there is a number $\delta\in(0,1)$ such that,
for every fixed set of $m$ points $X:=\{X_1,\ldots, X_m\}$, for some number $\Theta(X)$,
one has
$$
\mathbb{E}_\varepsilon\sup\limits_{f\in B}\Bigl|\sum\limits_{j=1}^m\varepsilon_j|f(X_j)|^p\Bigr|\le
\Theta(X)\sup\limits_{f\in B}\bigl(R_p(f)\bigr)^{1-\delta}
$$
where $\varepsilon_1,\ldots, \varepsilon_m$ are independent symmetric Bernoulli random variables with values $\pm1$.
Then
$$
\mathbb{E}V_p(B)\le
2^{1/\delta}m^{-1}\mathbb{E}[\Theta(X)^{1/\delta}]
+ 2\delta^{-1} \bigl(m^{-1}\mathbb{E}[\Theta(X)^{1/\delta}]\bigr)^\delta
\Bigl(\sup\limits_{f\in B}\mathbb{E}|f(X_1)|^p\Bigr)^{1-\delta}.
$$
\end{lemma}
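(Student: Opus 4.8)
The plan is to run the standard symmetrization argument for empirical processes, keeping careful track of the fact that the bound on the Rademacher average is not uniform but depends on $\sup_{f\in B}(R_p(f))^{1-\delta}$, which itself is a random quantity controlled by $V_p(B)$. First I would introduce an independent copy $X_1',\dots,X_m'$ of the sample and write, by Jensen's inequality applied to the (convex) supremum,
\[
\mathbb{E}V_p(B)=\mathbb{E}\sup_{f\in B}\Bigl|\tfrac1m\sum_j |f(X_j)|^p-\mathbb{E}|f(X_1)|^p\Bigr|
\le \mathbb{E}\sup_{f\in B}\Bigl|\tfrac1m\sum_j\bigl(|f(X_j)|^p-|f(X_j')|^p\bigr)\Bigr|.
\]
Then I would insert Bernoulli signs $\varepsilon_j$: since $|f(X_j)|^p-|f(X_j')|^p$ is symmetric, the expression has the same distribution as $\tfrac1m\sum_j\varepsilon_j(|f(X_j)|^p-|f(X_j')|^p)$, and by the triangle inequality this is at most $\tfrac2m\,\mathbb{E}\sup_{f\in B}\bigl|\sum_j\varepsilon_j|f(X_j)|^p\bigr|$. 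Applying the hypothesis and then conditioning, I get
\[
\mathbb{E}V_p(B)\le \tfrac2m\,\mathbb{E}\Bigl[\Theta(X)\sup_{f\in B}\bigl(R_p(f)\bigr)^{1-\delta}\Bigr].
\]

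The second step is to decouple $\Theta(X)$ from $\sup_f(R_p(f))^{1-\delta}$ using Hölder's inequality with exponents $1/(1-\delta)$ and $1/\delta$:
\[
\mathbb{E}\Bigl[\Theta(X)\sup_{f\in B}\bigl(R_p(f)\bigr)^{1-\delta}\Bigr]
\le \bigl(\mathbb{E}[\Theta(X)^{1/\delta}]\bigr)^{\delta}\,
\Bigl(\mathbb{E}\sup_{f\in B}R_p(f)\Bigr)^{1-\delta}.
\]
Now the key observation: $R_p(f)=\sum_j|f(X_j)|^p = m\bigl(\tfrac1m\sum_j|f(X_j)|^p - \|f\|_p^p\bigr) + m\|f\|_p^p$, so
\[
\mathbb{E}\sup_{f\in B}R_p(f)\le m\,\mathbb{E}V_p(B) + m\sup_{f\in B}\mathbb{E}|f(X_1)|^p.
\]
Write $S:=\mathbb{E}V_p(B)$, $T:=\sup_{f\in B}\mathbb{E}|f(X_1)|^p$, and $\Lambda := m^{-1}\mathbb{E}[\Theta(X)^{1/\delta}]$. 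Combining the above gives the self-bounding inequality
\[
S \le 2\Lambda^{\delta}\,(S+T)^{1-\delta}.
\]

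The final step — and the only genuinely fiddly point — is to solve this implicit inequality for $S$. I would split into two cases. If $S\le T$, then $(S+T)^{1-\delta}\le (2T)^{1-\delta}\le 2T^{1-\delta}$, so $S\le 2\Lambda^\delta\cdot 2T^{1-\delta}$, but more relevantly one uses subadditivity $(S+T)^{1-\delta}\le S^{1-\delta}+T^{1-\delta}$ (valid since $0<1-\delta<1$) to write $S\le 2\Lambda^\delta S^{1-\delta} + 2\Lambda^\delta T^{1-\delta}$. If the first term dominates, $S\le 2\Lambda^\delta S^{1-\delta}$ gives $S^\delta\le 2\Lambda^\delta$, i.e. $S\le 2^{1/\delta}\Lambda$; if the second term dominates, $S\le 4\Lambda^\delta T^{1-\delta}$. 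A clean way to get the stated constants is: in the first case $S\le 2\cdot 2\Lambda^\delta S^{1-\delta}$ so $S\le 4^{1/\delta}\Lambda$ — to match the paper's $2^{1/\delta}$ one should instead argue that $S\le 2\Lambda^\delta S^{1-\delta}$ directly forces $S\le 2^{1/\delta}\Lambda$, and in the second case bound $(S+T)^{1-\delta}\le 2^{1-\delta}\max\{S,T\}^{1-\delta}$ more carefully to extract the factor $2\delta^{-1}$; the $\delta^{-1}$ arises from absorbing the $S^{1-\delta}$ term via Young's inequality $2\Lambda^\delta S^{1-\delta}\le \delta S + (1-\delta)(2\Lambda^\delta/\delta)^{1/\delta}\cdot\text{(const)}$, wait — cleaner: apply Young's inequality $ab\le \delta a^{1/\delta}+(1-\delta)b^{1/(1-\delta)}$ to $a=(2\Lambda^\delta)^{1/\delta}\cdot(\text{scaling})$ so that the $S^{1-\delta}$ power gets absorbed into $\tfrac12 S$, leaving $S\le 2^{1/\delta}\Lambda + C(\delta)\Lambda^\delta T^{1-\delta}$ with $C(\delta)=2\delta^{-1}$. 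I expect the main obstacle to be nothing conceptual but rather chasing the constants $2^{1/\delta}$ and $2\delta^{-1}$ through this last algebraic step so that they come out exactly as stated; the Young-inequality absorption $2\Lambda^\delta(S+T)^{1-\delta}\le \tfrac12 S + 2^{1/\delta}\Lambda + 2\delta^{-1}\Lambda^\delta T^{1-\delta}$, followed by moving $\tfrac12 S$ to the left and multiplying by $2$, is the cleanest route and should be carried out explicitly.
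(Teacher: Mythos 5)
Your proposal follows the paper's proof essentially step for step: symmetrization with an independent copy and Bernoulli signs, H\"older's inequality to decouple $\Theta(X)$ from $\sup_{f}R_p(f)$, the reduction $\mathbb{E}\sup_f R_p(f)\le m(S+T)$ leading to the self-bounding inequality $S\le 2\Lambda^\delta (S+T)^{1-\delta}$, and its resolution via subadditivity plus Young's inequality. The only slip is in the final absorption: Young's inequality with exponents $1/\delta$ and $1/(1-\delta)$ gives $2\Lambda^\delta S^{1-\delta}\le \delta\,2^{1/\delta}\Lambda+(1-\delta)S$ rather than $\tfrac12 S+\cdots$ (the latter fails for $\delta<\tfrac12$ and in any case doubles the constants), and keeping the residual $\delta S$ on the left and dividing by $\delta$ yields exactly $S\le 2^{1/\delta}\Lambda+2\delta^{-1}\Lambda^\delta T^{1-\delta}$, which is precisely the paper's computation.
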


\begin{proof}
Let $X_1',\ldots,X_m'$ be independent copies of $X_1,\ldots,X_m$.
We note that
\begin{multline*}
m\mathbb{E}V_p(B)
=
\mathbb{E}\sup\limits_{f\in B}\Bigl|\sum\limits_{j=1}^m(|f(X_j)|^p - \mathbb{E}|f(X_j')|^p)\Bigr|
\le
\mathbb{E}_X\mathbb{E}_{X'} \sup\limits_{f\in B}\Bigl|\sum\limits_{j=1}^m(|f(X_j)|^p - |f(X_j')|^p)\Bigr|
\\
=
\mathbb{E}_X\mathbb{E}_{X'}\mathbb{E}_\varepsilon
\sup\limits_{f\in B}\Bigl|\sum\limits_{j=1}^m\varepsilon_j(|f(X_j)|^p - |f(X_j')|^p)\Bigr|
\le 2\mathbb{E}_X\mathbb{E}_\varepsilon
\sup\limits_{f\in B}\Bigl|\sum\limits_{j=1}^m\varepsilon_j|f(X_j)|^p\Bigr|
\\
\le
2\mathbb{E}\bigl[\Theta(X)[\sup\limits_{f\in B}R_p(f)]^{1-\delta}\bigr]
\le
2\bigl(\mathbb{E}[\Theta(X)^{1/\delta}]\bigr)^\delta
m^{1-\delta}\Bigl(\mathbb{E}\sup\limits_{f\in B}\frac{1}{m}\sum\limits_{j=1}^m|f(X_j)|^p\Bigr)^{1-\delta}
\\
\le
2\bigl(\mathbb{E}[\Theta(X)^{1/\delta}]\bigr)^\delta  m^{1-\delta}
\Bigl(\mathbb{E}V_p(B) + \sup\limits_{f\in B}\mathbb{E}|f(X_1)|^p\Bigr)^{1-\delta}.
\end{multline*}
Thus,
$$
\mathbb{E}V_p(B)\le 2\bigl(\mathbb{E}[\Theta(X)^{1/\delta}]\bigr)^\delta  m^{-\delta}
\Bigl(\mathbb{E}V_p(B) + \sup\limits_{f\in B}\mathbb{E}|f(X_1)|^p\Bigr)^{1-\delta}
$$
and
$$
\mathbb{E}V_p(B)\le 2^{1/\delta}m^{-1}\mathbb{E}[\Theta(X)^{1/\delta}]
+ 2\delta^{-1} \bigl(m^{-1}\mathbb{E}[\Theta(X)^{1/\delta}]\bigr)^\delta
\Bigl(\sup\limits_{f\in B}\mathbb{E}|f(X_1)|^p\Bigr)^{1-\delta}.
$$
Indeed, if for some $v, a, b>0$ and some $\delta\in(0,1)$
one has the estimate $v\le a (v+b)^{1-\delta}$,
then by convexity and Young's inequality one has
$$a(v+b)^{1-\delta}\le av^{1-\delta} + ab^{1-\delta}\le
\delta a^{1/\delta} + (1-\delta)v+ ab^{1-\delta}$$ and
$v\le a^{1/\delta} + \delta^{-1}ab^{1-\delta}$.
The lemma is proved.
\end{proof}

Lemma \ref{Lem-1} reduces the main problem of estimating the expectation
$\mathbb{E}\bigl[V_p(B)\bigr]$
to the estimation of
$$
\mathbb{E}_\varepsilon\sup\limits_{f\in B}\Bigl|\sum\limits_{j=1}^m\varepsilon_j|f(X_j)|^p\Bigr|
$$
for any fixed discrete point set $X=\{X_1,\ldots, X_m\}$.
Thus, we now deal with the Bernoulli random
process $\varepsilon_f:=\sum\limits_{j=1}^m\varepsilon_j|f(X_j)|^p$
and we want to estimate the expectation of its supremum.
For the Bernoulli random
process one has the following tail estimate (see \cite[Lemma 4.3]{LedTal}).

\begin{lemma}\label{TailsEst}
Let $\varepsilon_1,\ldots, \varepsilon_m$ be independent symmetric Bernoulli random variables with values $\pm1$.
Then for any $\tau\in[2, \infty)$ there is a number $C_\tau$, depending only on $\tau$, such that
$$
P\Bigl(\bigl|\sum\limits_{j=1}^m\varepsilon_jc_j\bigr|\ge
C_\tau\bigl(\sum\limits_{j=1}^m|c_j|^{\tau'}\bigr)^{1/\tau'}t^{1/\tau}\Bigr)\le 2e^{-t},
$$
where $\tau'=\frac{\tau}{\tau-1}$.
\end{lemma}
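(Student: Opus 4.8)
The plan is to obtain the tail bound directly, by decomposing the coefficient vector into its large and small coordinates — a split that does not depend on the $\varepsilon_j$ — bounding the large part deterministically, the small part by Hoeffding's inequality, and then balancing the two contributions. By homogeneity in $(c_j)$ we may assume $\bigl(\sum_{j=1}^m|c_j|^{\tau'}\bigr)^{1/\tau'}=1$ (if all the $c_j$ vanish there is nothing to prove), so it suffices to show $P(|S|\ge C_\tau t^{1/\tau})\le 2e^{-t}$ for every $t>0$, where $S:=\sum_{j=1}^m\varepsilon_jc_j$. Fix $t>0$, set $\lambda:=t^{-1/\tau'}$, and put $A:=\{j\colon|c_j|>\lambda\}$, $S_A:=\sum_{j\in A}\varepsilon_jc_j$, $S_{A^c}:=\sum_{j\notin A}\varepsilon_jc_j$, so that $S=S_A+S_{A^c}$.

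For the large part I would use the crude deterministic estimate $|S_A|\le\sum_{j\in A}|c_j|$. Since $|c_j|^{\tau'}>\lambda^{\tau'}$ on $A$ while $\sum_{j\in A}|c_j|^{\tau'}\le1$, we get $|A|<\lambda^{-\tau'}=t$; and H$\ddot{o}$lder's inequality with the conjugate exponents $(\tau,\tau')$ (recall $1/\tau+1/\tau'=1$) gives $\sum_{j\in A}|c_j|\le|A|^{1/\tau}\bigl(\sum_{j\in A}|c_j|^{\tau'}\bigr)^{1/\tau'}\le|A|^{1/\tau}<t^{1/\tau}$. Hence $|S_A|<t^{1/\tau}$ pointwise.

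For the small part, $|c_j|\le\lambda$ on $A^c$ and $\tau'\le2$, so $\sum_{j\notin A}c_j^2\le\lambda^{2-\tau'}\sum_{j\notin A}|c_j|^{\tau'}\le\lambda^{2-\tau'}$; Hoeffding's inequality for Rademacher sums then yields $P(|S_{A^c}|\ge s)\le2\exp\bigl(-s^2/(2\lambda^{2-\tau'})\bigr)$ for every $s>0$. Taking $s:=\sqrt{2\lambda^{2-\tau'}t}$ makes the right-hand side equal to $2e^{-t}$, and a short computation using $\lambda=t^{-1/\tau'}$ together with $1-1/\tau'=1/\tau$ shows $s=\sqrt2\,t^{1/\tau}$. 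A union bound applied to $|S|\le|S_A|+|S_{A^c}|$ now gives $P\bigl(|S|\ge(1+\sqrt2)t^{1/\tau}\bigr)\le2e^{-t}$, and undoing the normalization multiplies the threshold by $\bigl(\sum_j|c_j|^{\tau'}\bigr)^{1/\tau'}$. This proves the lemma with $C_\tau=1+\sqrt2$, which in fact may be taken independent of $\tau$.

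I do not expect a genuine obstacle; the only point requiring care is the exponent bookkeeping, and in particular the observation that one cannot simply apply a single subgaussian estimate to the whole of $S$ — that would give only a $\sqrt t$ tail, which for $\tau>2$ is strictly weaker than the claim when the coefficients are spread out. Separating the at most $t$ large coordinates is precisely what removes this loss: their total $\ell^1$ mass is controlled purely through the $\ell^{\tau'}$ normalization by H$\ddot{o}$lder, while the bounded coordinates carry the subgaussian fluctuation, and the choice $\lambda\asymp t^{-1/\tau'}$ is exactly the one making both contributions of order $t^{1/\tau}$. An essentially equivalent alternative would be to establish the moment bound $\|S\|_{L^r}\le C_\tau r^{1/\tau}\bigl(\sum_j|c_j|^{\tau'}\bigr)^{1/\tau'}$ for all $r\ge1$ by the same truncation combined with Khinchine's inequality on the bounded part, and then pass to the tail via Markov's inequality optimized in $r$.
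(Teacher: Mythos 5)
Your proof is correct: the normalization, the bound $|A|<t$ on the number of large coordinates, the H\"older estimate $\sum_{j\in A}|c_j|\le |A|^{1/\tau}<t^{1/\tau}$, the variance bound $\sum_{j\notin A}c_j^2\le\lambda^{2-\tau'}$ (using $\tau'\le 2$), and the exponent computation $s=\sqrt2\,t^{1/\tau}$ all check out, and the split into large and small coordinates is indeed deterministic so the union bound is legitimate. The paper itself gives no proof of this lemma --- it simply cites Lemma 4.3 of Ledoux and Talagrand --- and your truncation argument is essentially the standard proof of that cited result, so there is nothing further to reconcile.
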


For a fixed discrete set $X=\{X_1,\ldots, X_m\}$
and for any non-negative function $\varphi$ on $X$
we consider the norms
$\|f\|_{r, X; \varphi}:=\Bigl(\sum\limits_{j=1}^m|f(X_j)|^r\varphi(X_j)\Bigr)^{1/r},
r\in[1, \infty)$, defined on all functions $f\colon X\to \mathbb{R}$.
When $\varphi\equiv1$, we write $\|\cdot\|_{r, X}$ in place
of $\|\cdot\|_{r, X; 1}$.
We also set
$\|f\|_{\infty, X}:=\max\limits_{1\le j\le m}|f(X_j)|$.

\begin{lemma}\label{L0}
Let $p\in[1,\infty)$, $q\in[2,\infty)$, $r\in (1,2]$.
Let $X=\{X_1,\ldots, X_m\}$ be a fixed set,
let $L$ be a linear space of functions defined on $X$,
and assume that $D\subset L$ is a symmetric
$q$-convex (with constant $\eta>0$) body.
Then there is a constant $C:=C(p, q, r, \eta)$, which depends only on parameters
$p$, $q$, $r$, and $\eta$,
such that for any $B\subset D$ one has
\begin{multline*}
\mathbb{E}_\varepsilon\sup\limits_{f\in B}\Bigl|\sum\limits_{j=1}^m\varepsilon_j|f(X_j)|^p\Bigr|
\le
C\Bigl(
\Bigl[\sup\limits_{h\in B}
\sum\limits_{k=0}^\infty\bigl(2^{k/r'}e_k(D, \|\cdot\|_{r, X; |h|^{r(p-1)}})\bigr)^{\frac{q}{q-1}}\Bigr]^{\frac{q-1}{q}}
+
\bigl[\gamma_{ pr',p}(B, \|\cdot\|_{pr, X})\bigr]^p
\Bigr),
\end{multline*}
where $r'=\frac{r}{r-1}$.
\end{lemma}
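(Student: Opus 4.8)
The plan is to apply the chaining machinery of Theorem \ref{T-VH} to the Bernoulli process $\varepsilon_f=\sum_{j=1}^m\varepsilon_j|f(X_j)|^p$, using the counting measure $\nu$ on the finite set $X=\{X_1,\dots,X_m\}$ as the measure $\nu$ appearing in the discussion after Theorem \ref{T-VH}. First I would fix the parameter $r\in(1,2]$ and take in that discussion $p$ as our $p$ and $r$ as the exponent there, so that the relevant quasi-metric is
$$
\varrho(f,g):=\||f-g|(|f|^{p-1}+|g|^{p-1})\|_{r,X},
$$
the norms are $\|f\|_h:=\||f|\,|h|^{p-1}\|_{r,X}=\|f\|_{r,X;|h|^{r(p-1)}}$, and the metric is $d(f,g):=\|f-g\|_{pr,X}$. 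By Lemma \ref{Lem} these objects satisfy precisely the hypotheses of Theorem \ref{T-VH} (the relaxed triangle inequality with constant $C_1(p)$, the midpoint bound $\varrho(f,(f+g)/2)\le\varrho(f,g)$, and the two-sided comparison with $c_1=C_2(p)$, $c_2=C_3(p)$).

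Next I would verify the tail condition \eqref{cond} for $\varepsilon_f$ with respect to $\varrho$. Conditioning on $X$ and applying Lemma \ref{TailsEst} with $\tau=r'=\frac{r}{r-1}\in[2,\infty)$ (valid since $r\in(1,2]$) to the coefficients $c_j=|f(X_j)|^p-|g(X_j)|^p$, one gets
$$
P\Bigl(|\varepsilon_f-\varepsilon_g|\ge C_{r'}\,\bigl\||f|^p-|g|^p\bigr\|_{r,X}\,t^{1/r'}\Bigr)\le 2e^{-t}.
$$
Since $\bigl||f|^p-|g|^p\bigr|\le p\,|f-g|(|f|^{p-1}+|g|^{p-1})$ pointwise, we have $\||f|^p-|g|^p\|_{r,X}\le p\,\varrho(f,g)$, so \eqref{cond} holds with $\alpha=r'$ and $K=pC_{r'}$. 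Then Theorem \ref{T-Tal} (applied with $f_0$ any fixed point of $B$ and using that $\varepsilon_{f_0}$ has mean zero so $\mathbb{E}\sup_{f\in B}|\varepsilon_f|\le \mathbb{E}\sup_{f\in B}|\varepsilon_f-\varepsilon_{f_0}|+\mathbb{E}|\varepsilon_{f_0}|$, the last term being absorbed) yields
$$
\mathbb{E}_\varepsilon\sup_{f\in B}\Bigl|\sum_{j=1}^m\varepsilon_j|f(X_j)|^p\Bigr|\le C\,\gamma_{r',1}(B,\varrho).
$$

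Finally I would feed this into Theorem \ref{T-VH} with $\alpha=r'$, using $q\ge2$ and $p>1$ as required, together with the observation that a $q$-convex body is contained in itself ($B\subset D$). This gives
$$
\gamma_{r',1}(B,\varrho)\le C\Bigl(\eta^{-1/q}\Bigl[\sup_{h\in B}\sum_{k=0}^\infty\bigl(2^{k/r'}e_k(D,\|\cdot\|_{r,X;|h|^{r(p-1)}})\bigr)^{\frac{q}{q-1}}\Bigr]^{\frac{q-1}{q}}+\bigl[\gamma_{r'p,p}(B,\|\cdot\|_{pr,X})\bigr]^p\Bigr),
$$
where $\alpha p=r'p$ matches the index in the statement, and the constant depends only on $q,p,r',R=C_1(p),\varkappa=1,c_1=C_2(p),c_2=C_3(p)$, hence only on $p,q,r,\eta$. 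Combining the two displays proves the lemma. The one point that needs genuine care — the main obstacle — is matching the abstract hypotheses of Theorem \ref{T-VH} to the concrete objects built from the counting measure: one must check that the case $p=1$ is handled (then $\varrho,d,\|\cdot\|_h$ degenerate but the inequalities of Lemma \ref{Lem} still hold, or one treats $p=1$ directly since $e_k(D,\|\cdot\|_{1,X;1})$ and the $\gamma$-functional simplify), and that the factor $\eta^{-1/q}$ and all constants are correctly tracked as depending only on the listed parameters; everything else is a direct substitution.
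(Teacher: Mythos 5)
Your proposal is correct and follows essentially the same route as the paper: condition on $X$, use Lemma \ref{TailsEst} with $\tau=r'$ to get the increment condition \eqref{cond} for the quasi-metric $\varrho_{r'}$, reduce to $\gamma_{r',1}(B,\varrho_{r'})$ via Theorem \ref{T-Tal}, and then invoke Theorem \ref{T-VH} through Lemma \ref{Lem} with exactly the norms $\|\cdot\|_{r,X;|h|^{r(p-1)}}$ and the metric $\|\cdot\|_{pr,X}$. The two side remarks you flag (absorbing $\mathbb{E}|\varepsilon_{f_0}|$, which is dominated by the $k=0$ entropy term since $r\le 2$ and $D$ is symmetric, and the degenerate but harmless case $p=1$) are also glossed over in the paper and do not affect the argument.
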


\begin{proof} For any $\tau\in[2, \infty)$, by Lemma \ref{TailsEst}, we have the estimate \eqref{cond}
with the quasi-metric
\begin{equation}\label{eq-a-metric}
\varrho_\tau(f,g):=
\Bigl(\sum\limits_{j=1}^m
\bigl||f(X_j)-g(X_j)|(|f(X_j)|^{p-1}+ |g(X_j)|^{p-1})\bigr|^{\tau'}
\Bigr)^{1/\tau'}.
\end{equation}
We chose $\tau = \frac{r}{r-1}=r'$.
Thus, by Theorem \ref{T-Tal}, the bound for the expectation of the supremum over $B$
of the process $\varepsilon_f:=\sum\limits_{j=1}^m\varepsilon_j|f(X_j)|^p$
will follow from the bound
for the chaining functional $\gamma_{\tau,1}(B, \varrho_\tau)$.
By Lemma~\ref{Lem}, we can apply Theorem~\ref{T-VH}
with
$$
\|f\|_h=\|f\|_{r, X; |h|^{r(p-1)}}=\Bigl(\sum\limits_{j=1}^m|f(X_j)|^{r}
|h(X_j)|^{r(p-1)}\Bigr)^{1/r}
$$
and
$$
d(f,g) =
\Bigl(\sum\limits_{j=1}^m|f(X_j) - g(X_j)|^{pr}\Bigr)^{\frac{1}{pr}}
=
\|f-g\|_{pr, X}.
$$
By Theorem~\ref{T-VH}, there is a constant $C:= C(p, q, r, \eta)$ such that
$$
\gamma_{\tau, 1}(B, \varrho_\tau)\le
C\Bigl(
\Bigl[\sup\limits_{h\in B}
\sum\limits_{k=0}^\infty\bigl(2^{k/\tau}e_k(B, \|\cdot\|_h)\bigr)^{\frac{q}{q-1}}\Bigr]^{\frac{q-1}{q}}
+
\bigl[\gamma_{\tau p,p}(B, d)\bigr]^p
\Bigr)
$$
which is the announced bound.
\end{proof}

We now bound the summands of the right hand side of the estimate from the previous lemma
under different assumptions on the bodies $D$ and $B$.

\begin{lemma}\label{L1}
Let $L$ be a linear space of functions defined on a discrete set $X=\{X_1, \ldots, X_m\}$
and let $r\in(1, 2]$, $r':=\frac{r}{r-1}$, $q\ge2$.

$1)$ If $D\subset L$ is a Euclidean unit ball, then
there is a numerical constant $C$ such that
for any $p\in(1,\infty)$ and any $h\in L$ one has
$$
\Bigl[\sum\limits_{k=0}^\infty\bigl(2^{k/2}
e_k(D, \|\cdot\|_{2, X;|h|^{2(p-1)}})\bigr)^2\Bigr]^{1/2}\!\le
\bigl[\sup\limits_{f\in D}\|f\|_{\infty, X}\bigr]
\Bigl(\sum\limits_{j=1}^{m}|h(X_j)|^{2(p-1)}\Bigr)^{1/2}\!.
$$

$2)$ If $D\subset L$, then
for any $p\in(1,\infty)$ and for any $t\in(0, r]$
there is a number $c:=c(p, r, t)$ such that for any $h\in L$ one has
\begin{multline*}
\Bigl[\sum\limits_{k=0}^\infty\bigl(2^{k/r'}
e_k(D, \|\cdot\|_{r, X;|h|^{r(p-1)}})\bigr)^{\frac{q}{q-1}}\Bigr]^{\frac{q-1}{q}}
\\
\le
c
\sup\limits_{f\in D}
\Bigl(\sum\limits_{j=1}^m|f(X_j)|^{pr-t}\Bigr)^{\frac{r-t}{r(pr-t)}}
\Bigl(\sum\limits_{j=1}^m|h(X_j)|^{pr-t}\Bigr)^{\frac{pr-r}{r(pr-t)}}
\Bigl[\sum\limits_{k=0}^\infty\bigl(2^{k/r'}
[e_k(D, \|\cdot\|_{\infty, X})]^{t/r}\bigr)^{\frac{q}{q-1}}\Bigr]^{\frac{q-1}{q}}.
\end{multline*}

$3)$ If $B\subset L$ is $\theta$-convex (with a constant $\zeta>0$) body, then
for any $p\in(1,\infty)$ and for any $s\in(0, pr]$ there is a number $C:=C(p, s, \theta, \zeta)$ such that
$$
\bigl[\gamma_{pr',p}(B, \|\cdot\|_{pr, X})\bigr]^p
\le
C\sup\limits_{f\in B}\Bigl(\sum_{j=1}^{m}|f(X_j)|^{pr-s}\Bigr)^{1/r}
\sup\limits_{k\ge 0}2^{k/r'}[e_k(B,\|\cdot\|_{\infty, X})]^{s/r}
$$
if $p\ge\theta$ and
$$
\bigl[\gamma_{pr',p}(B, \|\cdot\|_{pr, X})\bigr]^p
\le
C\sup\limits_{f\in B}\Bigl(\sum_{j=1}^{m}|f(X_j)|^{pr-s}\Bigr)^{1/r}
\Bigl(\sum_{k\ge0}\bigl(2^{k/r'}[e_k(B,\|\cdot\|_{\infty,X})]^{s/r}\bigr)^{\frac{\theta}{\theta-p}}
\Bigr)^{\frac{\theta-p}{\theta}}
$$
if $p\in(1, \theta)$.
\end{lemma}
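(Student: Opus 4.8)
The plan is to prove the three assertions separately. Parts 2) and 3) rest on one and the same device — a Hölder/covering transfer from the discretized uniform norm $\|\cdot\|_{\infty,X}$ — while part 1) uses the Hilbertian structure directly. For part 1) I would set $\varphi:=|h|^{2(p-1)}$ and regard the seminorm $\|\cdot\|_{2,X;\varphi}$, i.e. $f\mapsto(\sum_j|f(X_j)|^2\varphi(X_j))^{1/2}$, as a Hilbertian seminorm on the finite dimensional space $L$; let $\lambda_1\ge\lambda_2\ge\dots\ge0$ be its eigenvalues relative to the Euclidean structure whose unit ball is $D$. Simultaneous diagonalization of the two quadratic forms shows that the identity map carries $D$ onto an ellipsoid with semi-axes $\sqrt{\lambda_i}$, so the classical estimate for the entropy numbers of an ellipsoid gives $\sum_{k\ge0}(2^{k/2}e_k(D,\|\cdot\|_{2,X;\varphi}))^2\le C\sum_i\lambda_i$ with an absolute constant $C$. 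It then remains to evaluate $\sum_i\lambda_i$: picking any basis $u_1,\dots,u_N$ of $L$ orthonormal for the inner product whose unit ball is $D$, using that the trace is basis independent, and using the Christoffel function identity $\sum_i|u_i(x)|^2=\sup_{f\in D}|f(x)|^2$, one obtains $\sum_i\lambda_i=\sum_i\|u_i\|_{2,X;\varphi}^2=\sum_{j=1}^m\varphi(X_j)\sup_{f\in D}|f(X_j)|^2\le[\sup_{f\in D}\|f\|_{\infty,X}]^2\sum_{j=1}^m|h(X_j)|^{2(p-1)}$, which is the assertion (up to the absolute constant coming from the ellipsoid estimate).

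For part 2) the key is the pointwise-to-sum bound: for $t\in(0,r]$ and any function $g$ on $X$, since $|g(X_j)|^t\le\|g\|_{\infty,X}^t$ and since the exponents $\frac{pr-t}{r-t}$ and $\frac{pr-t}{pr-r}$ are conjugate while the degrees balance (namely $(r-t)+r(p-1)=pr-t$), Hölder's inequality applied to $\sum_j|g(X_j)|^{r-t}|h(X_j)|^{r(p-1)}$ gives
$$\|g\|_{r,X;|h|^{r(p-1)}}\le\|g\|_{\infty,X}^{t/r}\Bigl(\sum_j|g(X_j)|^{pr-t}\Bigr)^{\frac{r-t}{r(pr-t)}}\Bigl(\sum_j|h(X_j)|^{pr-t}\Bigr)^{\frac{pr-r}{r(pr-t)}}.$$
Then I fix $k$ and let $g_1,\dots,g_{n_k}\in D$ (with $n_k=2^{2^k}$) be an almost optimal $\|\cdot\|_{\infty,X}$-net of $D$; for $f\in D$ and its nearest centre $g_i$ one has $\|f-g_i\|_{\infty,X}$ arbitrarily close to at most $e_k(D,\|\cdot\|_{\infty,X})$, while, $f$ and $g_i$ both lying in $D$, the (quasi-)triangle inequality in $\ell^{pr-t}$ gives $(\sum_j|f(X_j)-g_i(X_j)|^{pr-t})^{1/(pr-t)}\le c(p,r,t)\sup_{f\in D}(\sum_j|f(X_j)|^{pr-t})^{1/(pr-t)}$ (the constant depends on $p$ because $pr-t$ may be $<1$). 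Hence $g_1,\dots,g_{n_k}$ also form a net of $D$ in $\|\cdot\|_{r,X;|h|^{r(p-1)}}$, so
$$e_k(D,\|\cdot\|_{r,X;|h|^{r(p-1)}})\le c(p,r,t)[e_k(D,\|\cdot\|_{\infty,X})]^{t/r}\sup_{f\in D}\Bigl(\sum_j|f(X_j)|^{pr-t}\Bigr)^{\frac{r-t}{r(pr-t)}}\Bigl(\sum_j|h(X_j)|^{pr-t}\Bigr)^{\frac{pr-r}{r(pr-t)}}.$$
Since the two sup/sum factors are independent of $k$, substituting this into $[\sum_k(2^{k/r'}e_k(D,\|\cdot\|_{r,X;|h|^{r(p-1)}}))^{q/(q-1)}]^{(q-1)/q}$ pulls them out to the first power and leaves exactly the right-hand side of 2).

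For part 3) I would first reduce the convexity exponent: for $f,g\in B$ one has $\|f-g\|\le2$, hence $\|f-g\|^\theta\ge2^{\theta-p}\|f-g\|^p$ whenever $p\ge\theta$, so $B$ is then also $p$-convex with constant $\zeta 2^{\theta-p}$. Thus for $p\ge\theta$ I apply Theorem~\ref{T-Conv} with $q=p$ and $\alpha=pr'$, and for $p\in(1,\theta)$ I apply Theorem~\ref{T-Conv-2} with $q=\theta$ and $\alpha=pr'$; raising the resulting estimate to the power $p$ bounds $[\gamma_{pr',p}(B,\|\cdot\|_{pr,X})]^p$ by $C\sup_k(2^{k/(pr')}e_k(B,\|\cdot\|_{pr,X}))^p$ in the first case and by $C(\sum_k(2^{k/(pr')}e_k(B,\|\cdot\|_{pr,X}))^{\theta p/(\theta-p)})^{(\theta-p)/\theta}$ in the second. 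Finally I transfer to $\|\cdot\|_{\infty,X}$ exactly as in part 2: for $s\in(0,pr]$ and $g\in 2B$ the inequality $|g(X_j)|^s\le\|g\|_{\infty,X}^s$ gives $\|g\|_{pr,X}\le\|g\|_{\infty,X}^{s/(pr)}(\sum_j|g(X_j)|^{pr-s})^{1/(pr)}$, whence the same net argument yields $e_k(B,\|\cdot\|_{pr,X})\le C[e_k(B,\|\cdot\|_{\infty,X})]^{s/(pr)}\sup_{f\in B}(\sum_j|f(X_j)|^{pr-s})^{1/(pr)}$. Plugging this in, the $k$-independent factor $\sup_{f\in B}(\sum_j|f(X_j)|^{pr-s})^{1/(pr)}$ appears with overall exponent $p\cdot\frac{1}{pr}=\frac{1}{r}$ when $p\ge\theta$, and with $\frac{1}{pr}\cdot\frac{\theta p}{\theta-p}\cdot\frac{\theta-p}{\theta}=\frac{1}{r}$ when $p<\theta$; what is left is $\sup_k2^{k/r'}[e_k(B,\|\cdot\|_{\infty,X})]^{s/r}$, respectively $(\sum_k(2^{k/r'}[e_k(B,\|\cdot\|_{\infty,X})]^{s/r})^{\theta/(\theta-p)})^{(\theta-p)/\theta}$, which are the two asserted bounds.

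I do not expect a single deep obstacle here: each part reduces to Hölder's inequality, a covering/net transfer, and an appeal to the already proved Theorems~\ref{T-Conv} and~\ref{T-Conv-2} (and, for part 1, to the standard entropy estimate for ellipsoids together with the Christoffel identity). The work lies entirely in the bookkeeping — verifying that the Hölder exponents are conjugate and that the degrees in $f$, $g$ and $h$ add up to $pr-t$ (or $pr-s$), and keeping track of which factors are independent of the chaining index $k$ so that they factor out of the $\gamma$-type sums. The only genuine subtlety is that $\ell^{pr-t}$ (respectively $\ell^{pr-s}$) is merely a quasi-norm when its exponent is $<1$, which is exactly why the constants in 2) and 3) are allowed to depend on $p$.
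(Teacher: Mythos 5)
Your proposal is correct and follows essentially the same route as the paper: the ellipsoid entropy bound plus the Christoffel-type identity $\sum_i|u_i(x)|^2=\sup_{f\in D}|f(x)|^2$ for part 1, and the Hölder/net transfer from $\|\cdot\|_{\infty,X}$ combined with Theorems \ref{T-Conv} and \ref{T-Conv-2} for parts 2 and 3. The only cosmetic deviation is in part 3 for $p\ge\theta$, where you upgrade $\theta$-convexity to $p$-convexity before invoking Theorem \ref{T-Conv}, while the paper instead uses the monotonicity $\gamma_{pr',p}\le\gamma_{pr',\theta}$ and applies the theorem with $q=\theta$; the two reductions are equivalent up to constants.
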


\begin{proof}

$1)$ The first claim has been observed in \cite{VH} (see the proof of Corollary~7.4 there)
and follows from the bounds for the entropy numbers
of ellipsoids with respect to a Euclidean norm
from \cite[Lemma 2.5.5]{Tal}. The cited lemma implies that
$e_{k+3}(D, \|\cdot\|)\le 3\max\limits_{i\le k}(a_{2^i}2^{i-k})$
for any Euclidean ball $D$
and for any norm $\|\sum c_iu_i\|=(\sum a_i^2 c_i^2)^{1/2}$, where $\{u_i\}$ is an orthonormal
basis in $L$ with respect to the norm generated by the Euclidean ball $D$ and where
$\{a_i\}$ is a non-increasing sequence of positive numbers.
Thus,
\begin{multline*}
\Bigl[\sum\limits_{k=3}^\infty\bigl(2^{k/2}
e_k(D, \|\cdot\|)\bigr)^2\Bigr]^{1/2}\le
c_1\Bigl[\sum\limits_{k=0}^\infty2^{k}
\sum\limits_{i\le k}\bigl(a_{2^i}2^{i-k}\bigr)^2\Bigr]^{1/2}
=
c_1\Bigl[\sum\limits_{i}a_{2^i}^22^{2i}
\sum\limits_{k=i}^\infty2^{-k}\Bigr]^{1/2}
\\=
c_2\Bigl[\sum\limits_{i}a_{2^i}^22^{i}\Bigr]^{1/2}
\le
c_2\Bigl[\sum\limits_{i}a_{i}^2\Bigr]^{1/2}
= c_2\Bigl[\sum\limits_{i}\|u_i\|^2\Bigr]^{1/2}.
\end{multline*}
In our case
\begin{multline*}
\Bigl[\sum\limits_{k=0}^\infty\bigl(2^{k/2}
e_k(D, \|\cdot\|_{2, X;|h|^{2(p-1)}})\bigr)^2\Bigr]^{1/2}
\le
c_2\Bigl[\sum\limits_{i}\|u_i\|_{2, X;|h|^{2(p-1)}}^2\Bigr]^{1/2}
\\=
c_2\Bigl[\sum\limits_{i}\sum\limits_{j=1}^{m}|u_i(X_j)|^2|h(X_j)|^{2(p-1)}\Bigr]^{1/2}
\le
c_2\max\limits_{1\le j\le m}\Bigl[\sum\limits_{i}|u_i(X_j)|^2\Bigr]^{1/2}
\Bigl[\sum\limits_{j=1}^{m}|h(X_j)|^{2(p-1)}\Bigr]^{1/2}
\\
=
c_2\max\limits_{1\le j\le m}\sup\limits_{\sum c_i^2\le 1}\Bigl|\sum\limits_{i}c_iu_i(X_j)\Bigr|
\Bigl[\sum\limits_{j=1}^{m}|h(X_j)|^{2(p-1)}\Bigr]^{1/2}
=c_2\bigl[\sup\limits_{f\in D}\|f\|_{\infty, X}\bigr]
\Bigl(\sum\limits_{j=1}^{m}|h(X_j)|^{2(p-1)}\Bigr)^{1/2}\!.
\end{multline*}
The first claim is proved.

$2)$ For $p>1$ and $t\in(0, r]$ one has
\begin{multline*}
\|f\|_{r, X; |h|^{r(p-1)}}=\Bigl(\sum\limits_{j=1}^m|f(X_j)|^{r}
|h(X_j)|^{r(p-1)}\Bigr)^{1/r}
\le
\|f\|_{\infty,X}^{t/r} \Bigl(\sum\limits_{j=1}^m|f(X_j)|^{r-t}|h(X_j)|^{pr-r}\Bigr)^{1/r}
\\
\le
\|f\|_{\infty,X}^{t/r}\Bigl(\sum\limits_{j=1}^m|f(X_j)|^{pr-t}\Bigr)^{\frac{r-t}{r(pr-t)}}
\Bigl(\sum\limits_{j=1}^m|h(X_j)|^{pr-t}\Bigr)^{\frac{pr-r}{r(pr-t)}}
\end{multline*}
and there is a number $c(p, r, t)$ such that for any $f, g\in D$
$$
\|f-g\|_{r, X; |h|^{r(p-1)}}
\le
c(p, r, t)\|f-g\|_{\infty,X}^{t/r}
\sup\limits_{u\in D}
\Bigl(\sum\limits_{j=1}^m|u(X_j)|^{pr-t}\Bigr)^{\frac{r-t}{r(pr-t)}}
\Bigl(\sum\limits_{j=1}^m|h(X_j)|^{pr-t}\Bigr)^{\frac{pr-r}{r(pr-t)}}
$$
which implies the second claim.

$3)$ Firstly, we note that for any $f, g\in B$ and for any $s\in(0, pr]$ one has
$$
\|f-g\|_{pr, X}^p
\le \|f-g\|_{\infty, X}^{s/r}
\Bigl(\sum_{j=1}^{m}|f(X_j) - g(X_j)|^{pr-s}\Bigr)^{1/r}
\le
2^{p}\|f-g\|_{\infty, X}^{s/r}\sup\limits_{u\in B}\Bigl(\sum_{j=1}^{m}|u(X_j)|^{pr-s}\Bigr)^{1/r}.
$$
If $p\ge\theta$,
by Theorem \ref{T-Conv}, one has
\begin{multline*}
\bigl[\gamma_{pr',p}(B, \|\cdot\|_{pr, X})\bigr]^p\le
\bigl[\gamma_{pr',\theta}(B, \|\cdot\|_{pr, X})\bigr]^p
\le
C_1(p, \theta, \zeta)\sup\limits_{k\ge 0}[2^{k/(pr')}e_k(B, \|\cdot\|_{pr, X})]^p
\\
\le
C_2(p, \theta, \zeta)\sup\limits_{u\in B}\Bigl(\sum_{j=1}^{m}|u(X_j)|^{pr-s}\Bigr)^{1/r}
\sup\limits_{k\ge 0}2^{k/r'}[e_k(B,\|\cdot\|_{\infty,X})]^{s/r}.
\end{multline*}
If $p\in(1, \theta)$,
by Theorem \ref{T-Conv-2}, one has
\begin{multline*}
\bigl[\gamma_{pr',p}(B, \|\cdot\|_{pr, X})\bigr]^p
\le
C_3(p, \theta, \zeta)\Bigl(\sum_{k\ge0}(2^{k/(pr')}
e_k(B, \|\cdot\|_{pr}))^{\frac{p\theta}{\theta-p}}\Bigr)^{\frac{\theta-p}{\theta}}
\\
\le
C_4(p, \theta, \zeta)\sup\limits_{u\in B}\Bigl(\sum_{j=1}^{m}|u(X_j)|^{pr-s}\Bigr)^{1/r}
\Bigl(\sum_{k\ge0}(2^{k/r'}e_k(B,\|\cdot\|_{\infty,X}^{s/r}))^{\frac{\theta}{\theta-p}}
\Bigr)^{\frac{\theta-p}{\theta}}.
\end{multline*}
The third claim is proved.
\end{proof}

The previous two lemmas imply the following conditional result
under the entropy numbers decay rate assumption.

\begin{theorem}\label{T0}
Let $p\in(1,\infty)$,  $\theta\ge 2$,  $\alpha\in (0, \infty)$,
and let $L$ be some subspace of~$L^p(\mu)\cap C(\Omega)$ for some Borel probability measure $\mu$
on a compact set $\Omega$.
Let $B\subset L$
be a
symmetric
$\theta$-convex (with constant $\zeta>0$) body.
Assume that for any fixed set of $m$ points $X=\{X_1, \ldots, X_m\}$
there is a constant $W_B(X)$ such that
$$
e_k(B, \|\cdot\|_{\infty, X})\le W_B(X)2^{-k/\alpha}.
$$

$1)$ Assume that $p\ge\alpha$.
Then there is a number $C:=C(p, \theta, \zeta, \alpha)$ such that
$$
\mathbb{E}\sup\limits_{f\in B}\Bigl|\frac{1}{m}\sum\limits_{j=1}^m|f(X_j)|^p - \|f\|_p^p\Bigr|
\le C\bigl(A +
A^{\frac{1}{\max\{\alpha, 2\}}}
(\sup\limits_{f\in B}\mathbb{E}|f(X_1)|^p)^{1-\frac{1}{\max\{\alpha, 2\}}}\bigr),
$$
where
$$
A= \frac{[\log m]^{\max\{\alpha, 2\}(1-\frac{1}{\theta})}}{m}
\mathbb{E}\bigl([W_B(X)]^\alpha
\sup\limits_{f\in B}\max\limits_{1\le j\le m}|f(X_j)|^{p-\alpha}\bigr).
$$

$2)$ Assume that $p\ge\max\{\alpha, 2\}$ and
assume that there is
a symmetric
$q$-convex (with a constant $\eta>0$) body $D\subset L$ such that
$B\subset D$. Assume that for any fixed discrete set of $m$ points $X=\{X_1, \ldots, X_m\}$
there is a constant $W_D(X)$ such that
$$
e_k(D, \|\cdot\|_{\infty, X})\le W_D(X)2^{-k/\beta}
$$
for some
$\beta\in[2, p]$. Then
there is a number $C:=C(p, \theta, \zeta, q, \eta, \alpha, \beta)$ such that
$$
\mathbb{E}\sup\limits_{f\in B}\Bigl|\frac{1}{m}\sum\limits_{j=1}^m|f(X_j)|^p - \|f\|_p^p\Bigr|
\le C\bigl(A_B+A_D +
(A_B+A_D)^{1/\beta}
(\sup\limits_{f\in B}\mathbb{E}|f(X_1)|^p)^{1-\frac{1}{\beta}}\bigr),
$$
where
$$
A_B= \frac{[\log m]^{\beta\max\{(1-\frac{p}{\theta}),0\}}}{m}
\mathbb{E}\bigl([W_B(X)]^\alpha
\sup\limits_{f\in B}\max\limits_{1\le j\le m}|f(X_j)|^{p-\alpha}\bigr)
$$
$$
A_D= \frac{[\log m]^{\beta(1-\frac{1}{q})}}{m}
\mathbb{E}\bigl([W_D(X)]^\beta
\sup\limits_{f\in B}\max\limits_{1\le j\le m}|f(X_j)|^{p-\beta}\bigr)
$$

$3)$ Assume that $p\ge\max\{\alpha, 2\}$ and assume that  there is a Euclidean ball $D\subset L$
such that $B\subset D$. Then
there is a constant $C:=C(p, \theta, \zeta, \alpha)$ such that
$$
\mathbb{E}\sup\limits_{f\in B}\Bigl|\frac{1}{m}\sum\limits_{j=1}^m|f(X_j)|^p - \|f\|_p^p\Bigr|
\le C\bigl(A +
A^{1/2}
(\sup\limits_{f\in B}\mathbb{E}|f(X_1)|^p)^{1/2}\bigr),
$$
where
\begin{multline*}
A=
\frac{1}{m}\mathbb{E}\bigl(\sup\limits_{f\in D}\max\limits_{1\le j\le m}|f(X_j)|^2
\sup\limits_{h\in B}\max\limits_{1\le j\le m}|h(X_j)|^{p-2}\bigr)
\\
+
\frac{[\log m]^{2\max\{1-\frac{p}{\theta}, 0\}}}{m}
\mathbb{E}\bigl([W_B(X)]^{\alpha}\sup\limits_{h\in B}
\max\limits_{1\le j\le m}|h(X_j)|^{p-\alpha}\bigr).
\end{multline*}
\end{theorem}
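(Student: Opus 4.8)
The plan is to feed the symmetrization estimate of Lemma~\ref{Lem-1} the chaining bound of Lemma~\ref{L0}, controlling the two summands on the right of Lemma~\ref{L0} by the entropy estimates of Lemma~\ref{L1}; everything below is carried out for a fixed realisation $X=\{X_1,\dots,X_m\}$, and functions on $X$ are viewed inside an $m$-dimensional space. The first step is the choice of parameters: in part~1 take $\delta=1/\max\{\alpha,2\}$ and $r=2$ if $\alpha\le 2$, $r=\alpha/(\alpha-1)$ if $\alpha>2$, so that $r'=\max\{\alpha,2\}$ and $r\in(1,2]$; in part~2 take $\delta=1/\beta$, $r=\beta/(\beta-1)$; in part~3 take $\delta=1/2$, $r=2$. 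In all cases $1-\delta=1/r$, which turns out to be exactly the exponent of $\sup_{f\in B}R_p(f)$ generated by Lemmas~\ref{L0} and \ref{L1}. Thus it suffices to produce, for each fixed $X$, a number $\Theta(X)$ with
$$
\mathbb{E}_\varepsilon\sup\limits_{f\in B}\Bigl|\sum\limits_{j=1}^m\varepsilon_j|f(X_j)|^p\Bigr|\le \Theta(X)\sup\limits_{f\in B}\bigl(R_p(f)\bigr)^{1-\delta}
$$
and to check that $m^{-1}\mathbb{E}[\Theta(X)^{1/\delta}]$ is bounded above by $A$ in part~1, by $A_B+A_D$ in part~2, and by $A$ in part~3; Lemma~\ref{Lem-1} then finishes the proof, the numerical factors $2^{1/\delta},2\delta^{-1}$ being absorbed into $C$.

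For the first summand of Lemma~\ref{L0} one applies Lemma~\ref{L1}: part~2 with $D=B$ in part~1 of the theorem, part~2 with the given $q$-convex body $D$ in part~2 of the theorem, and part~1 (the ellipsoid bound) in part~3 of the theorem. In the first two situations the crucial point is the choice of the auxiliary exponent $t$ in Lemma~\ref{L1}, part~2: taking $t=\gamma(r-1)$, where $\gamma$ is the entropy-decay exponent of the relevant body ($\gamma=\alpha$ for $D=B$, $\gamma=\beta$ for the given $q$-convex $D$), makes $2^{k/r'}\bigl[e_k(D,\|\cdot\|_{\infty,X})\bigr]^{t/r}\le W(X)$ for $0\le k\le\log m$, so the head of the entropy sum contributes just a factor $\log m$; for $k>\log m$ one invokes the finite-dimensional estimate \eqref{Eq1} with $k_0=\lceil\log m\rceil$, which gives $e_k(D,\|\cdot\|_{\infty,X})\le C W(X)2^{-k_0/\gamma}2^{-2^k/m}$ with $2^{-k_0/\gamma}\le m^{-1/\gamma}$, whereupon Lemma~\ref{L-bound} shows that the tail contributes only a constant multiple of the same power of $W(X)$ (the $m^{1/\gamma}$ from Lemma~\ref{L-bound} cancelling $m^{-1/\gamma}$). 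Raising to the power $(q-1)/q$ (with $q=\theta$ in part~1) yields a single logarithmic factor, $(\log m)^{(\theta-1)/\theta}$ in part~1 and $(\log m)^{(q-1)/q}$ in parts~2--3, while the ellipsoid bound of Lemma~\ref{L1}, part~1, used in part~3 carries no logarithm.

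The second summand $\bigl[\gamma_{pr',p}(B,\|\cdot\|_{pr,X})\bigr]^p$ is estimated by Lemma~\ref{L1}, part~3, with $s=\alpha(r-1)$, separating the cases $p\ge\theta$ (Theorem~\ref{T-Conv} applies; no logarithm) and $p\in(1,\theta)$ (Theorem~\ref{T-Conv-2} applies, and the same split-at-$\log m$ argument produces an extra $(\log m)^{1-p/\theta}$). In every case, and likewise for the first summand, one then uses $pr-t-p=(p-\gamma)(r-1)\ge0$ (valid since $p\ge\alpha$ and $p\ge\beta$) to write
$$
\sum_{j=1}^m|f(X_j)|^{pr-t}\le \Bigl(\max_{1\le j\le m}|f(X_j)|\Bigr)^{(p-\gamma)(r-1)}R_p(f),
$$
takes $\sup_{f\in B}$ over the product, and raises to the power $1/r$, arriving at the required form $\Theta(X)\sup_{f\in B}(R_p(f))^{1-\delta}$ with $\Theta(X)$ the sum of one term built from $W_B(X)$, $\max_j|f(X_j)|$ and a power of $\log m$, and one term built from $W_D(X)$ (or, in part~3, from $\sup_{f\in D}\max_j|f(X_j)|$) in the same fashion.

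Finally one raises $\Theta(X)$ to the power $1/\delta=r'$. The identity $r'(r-1)=r$ turns $\gamma(r-1)\tfrac{r'}{r}$ into $\gamma$ and $(p-\gamma)(r-1)\tfrac{r'}{r}$ into $p-\gamma$, so the powers of $W_B(X)$, $W_D(X)$ become $\alpha$, $\beta$, the powers of $\max_j|f(X_j)|$ become $p-\alpha$, $p-\beta$, and the powers of $\log m$ become $r'(1-\tfrac1\theta)=\max\{\alpha,2\}(1-\tfrac1\theta)$, $\beta(1-\tfrac1q)$, and $\beta\max\{1-\tfrac p\theta,0\}$ respectively; hence $m^{-1}\mathbb{E}[\Theta(X)^{1/\delta}]$ matches $A$, $A_B+A_D$, $A$ term by term, and Lemma~\ref{Lem-1} yields the three assertions. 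I expect the main obstacle to be precisely this bookkeeping — coordinating the exponents $r,t,s,\delta$ across three separate uses of the entropy bounds and verifying that the finite-dimensional tail (via \eqref{Eq1} and Lemma~\ref{L-bound}) never costs more than a constant, so that every logarithmic loss comes solely from the range $0\le k\le\log m$.
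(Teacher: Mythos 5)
Your proposal is correct and follows essentially the same route as the paper's proof: symmetrization via Lemma~\ref{Lem-1} with $\delta=1/\tau$, the chaining bound of Lemma~\ref{L0}, the entropy estimates of Lemma~\ref{L1} with exactly the paper's parameter choices ($r'=\tau=\max\{\alpha,2\}$, $\beta$, $2$; $t=\gamma/(\tau-1)$, $s=\alpha/(\tau-1)$), and the tail control via \eqref{Eq1} and Lemma~\ref{L-bound}. The only slip is cosmetic: the head terms $2^{k/r'}[e_k]^{t/r}$ are bounded by $W(X)^{t/r}$ rather than $W(X)$, but your final bookkeeping (power of $W$ becoming $\gamma$ after raising to $1/\delta=r'$) already uses the correct exponent.
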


\begin{proof}
For $\theta, q\ge 2$,  $\alpha, \beta\in (0, \infty)$, $p\in[\max\{\alpha, \beta\},\infty)$,
consider any $\tau\ge\max\{\beta, 2\}$.
Let $r = \frac{\tau}{\tau-1}$, i.e. $r'=\frac{r}{r-1}=\tau$,
$t=\frac{\beta}{\tau-1}\le\frac{\tau}{\tau-1}=r$.
Applying Lemma~\ref{L1}(2) we get
\begin{multline*}
\sup\limits_{h\in B}\Bigl[\sum\limits_{k=0}^\infty\bigl(2^{k/r'}
e_k(D, \|\cdot\|_{r, X;|h|^{r(p-1)}})\bigr)^{\frac{q}{q-1}}\Bigr]^{\frac{q-1}{q}}
\\
\le
C_1(p, \tau, \beta)
\sup\limits_{f\in D}
\Bigl(\sum\limits_{j=1}^m|f(X_j)|^{\frac{p\tau-\beta}{\tau-1}}
\Bigr)^{\frac{\tau-1}{\tau}\cdot\frac{\tau-\beta}{p\tau-\beta}}
\sup\limits_{h\in B}
\Bigl(\sum\limits_{j=1}^m|h(X_j)|^{\frac{p\tau-\beta}{\tau-1}}
\Bigr)^{\frac{\tau-1}{\tau}\cdot\frac{\tau(p-1)}{p\tau-\beta}}
\\
\times
\Bigl[\sum\limits_{k=0}^\infty\bigl(2^{k/\tau}
[e_k(D, \|\cdot\|_{\infty, X})]^{\beta/\tau}\bigr)^{\frac{q}{q-1}}
\Bigr]^{\frac{q-1}{q}}.
\end{multline*}
We firstly note that
$$
\sup\limits_{f\in D}
\Bigl(\sum\limits_{j=1}^m|f(X_j)|^{\frac{p\tau-\beta}{\tau-1}}
\Bigr)^{\frac{\tau-1}{\tau}\cdot\frac{\tau-\beta}{p\tau-\beta}}
\le
\sup\limits_{f\in D}\|f\|_{\infty, X}^{\frac{p-\beta}{\tau}\cdot\frac{\tau-\beta}{p\tau-\beta}}
\sup\limits_{f\in D}
\Bigl(\sum\limits_{j=1}^m|f(X_j)|^p\Bigr)^{\frac{\tau-1}{\tau}\cdot\frac{\tau-\beta}{p\tau-\beta}}
$$
and
$$
\sup\limits_{h\in B}
\Bigl(\sum\limits_{j=1}^m|h(X_j)|^{\frac{p\tau-\beta}{\tau-1}}
\Bigr)^{\frac{\tau-1}{\tau}\cdot\frac{\tau(p-1)}{p\tau-\beta}}
\!\!\le\!
\sup\limits_{h\in B}\|h\|_{\infty, X}^{\frac{p-\beta}{\tau}\cdot\frac{\tau(p-1)}{p\tau-\beta}}
\!\!\sup\limits_{h\in B}
\Bigl(\sum\limits_{j=1}^m|h(X_j)|^p\Bigr)^{\frac{\tau-1}{\tau}\cdot\frac{\tau(p-1)}{p\tau-\beta}}
$$
Secondly, we note that the dimension $N_X$
of the linear space
$$L_X:=\{(f(X_1), \ldots, f(X_m))\colon f\in L\}$$
is not greater than $m$.
Thus, by the estimate \eqref{Eq1} for any $k>k_0:=[\log m]$
\begin{multline*}
e_k(D,\|\cdot\|_{\infty, X})\le 3\ 2^{2^{k_0}/N_X} e_{k_0}(D,\|\cdot\|_{\infty, X}) 2^{-2^k/N_X}
\\
\le
6e_{k_0}(D,\|\cdot\|_{\infty, X}) 2^{-2^k/m}\le
6\cdot 2^{1/\beta}W_D(X)m^{-1/\beta}2^{-2^k/m}
\end{multline*}
implying that
\begin{multline*}
\Bigl[\sum\limits_{k=0}^\infty\bigl(2^{k/\tau}
[e_k(B, \|\cdot\|_{\infty, X})]^{\beta/\tau}\bigr)^{\frac{q}{q-1}}
\Bigr]^{\frac{q-1}{q}}
\\
\le
C_2(\beta)[W_D(X)]^{\beta/\tau}\Bigl[\sum\limits_{k\le \log m} 1
+ m^{-\frac{q}{\tau(q-1)}}
\sum\limits_{k> \log m}
\bigl(2^{k/\beta}2^{-2^k/m}\bigr)^{\frac{\beta q}{\tau(q-1)}}
\Bigr]^{\frac{q-1}{q}}
\\
\le C_3(\beta, q, \tau)
[W_D(X)]^{\beta/\tau} [\log m]^{\frac{q-1}{q}},
\end{multline*}
where in the last inequality we have used the bound from Lemma \ref{L-bound}.

Let  $s=\frac{\alpha}{\tau-1}\le\frac{p}{\tau-1}\le\frac{p\tau}{\tau-1}=pr$.
By Lemma \ref{L1}(3),
for $p\ge\theta$ there is a positive number $C_4:=C_4(p, s, \theta, \zeta)$ such that
\begin{multline*}
\bigl[\gamma_{pr',p}(B, \|\cdot\|_{pr, X})\bigr]^p
\!\!\le\!
C_4\sup\limits_{f\in B}\Bigl(\sum_{j=1}^{m}|f(X_j)|^{\frac{p\tau-\alpha}{\tau-1}}
\Bigr)^{\frac{\tau-1}{\tau}}
\sup\limits_{k\ge 0}2^{k/\tau}[e_k(B,\|\cdot\|_{\infty, X})]^{\alpha/\tau}
\\
\le
C_4 \sup\limits_{f\in B}\|f\|_{\infty, X}^{\frac{p-\alpha}{\tau}}
\sup\limits_{f\in B}\Bigl(\sum_{j=1}^{m}|f(X_j)|^{p}
\Bigr)^{\frac{\tau-1}{\tau}}
[W_B(X)]^{\alpha/\tau}.
\end{multline*}

For $p\in(1, \theta)$, by the same Lemma \ref{L1}(3),
there is a number $C_4:=C_4(p, s, \theta, \zeta)$ such that
$$
\bigl[\gamma_{pr',p}(B, \|\cdot\|_{pr, X})\bigr]^p
\le
C_4\sup\limits_{f\in B}\Bigl(\sum_{j=1}^{m}|f(X_j)|^{\frac{p\tau-\alpha}{\tau-1}}
\Bigr)^{\frac{\tau-1}{\tau}}
\Bigl(\sum_{k\ge0}\bigl(2^{k/\tau}[e_k(B,\|\cdot\|_{\infty,X})]^{\alpha/\tau}
\bigr)^{\frac{\theta}{\theta-p}}\Bigr)^{\frac{\theta-p}{\theta}}.
$$
The first factor is bounded by
$$
\sup\limits_{f\in B}\|f\|_{\infty, X}^{\frac{p-\alpha}{\tau}}
\sup\limits_{f\in B}\Bigl(\sum_{j=1}^{m}|f(X_j)|^{p}
\Bigr)^{\frac{\tau-1}{\tau}}.
$$
To estimate the second factor we again use the inequality
\eqref{Eq1} which implies that for any $k>k_0:=[\log m]$ one has
$$
e_k(B,\|\cdot\|_{\infty, X})\le
6\cdot 2^{1/\alpha}W_B(X)m^{-1/\alpha}2^{-2^k/m}.
$$
Combining this bound with Lemma \ref{L-bound} we get
\begin{multline*}
\Bigl(\sum\limits_{k=0}^\infty\bigl(2^{k/\tau}
[e_k(B, \|\cdot\|_{\infty, X})]^{\alpha/\tau}\bigr)^{\frac{\theta}{\theta-p}}
\Bigr)^{\frac{\theta-p}{\theta}}
\\
\le
C_5(\alpha)[W_B(X)]^{\alpha/\tau}\Bigl(\sum\limits_{k\le \log m} 1
+ m^{-\frac{\theta}{\tau(\theta-p)}}
\sum\limits_{k> \log m}
\bigl(2^{k/\alpha}2^{-2^k/m}\bigr)^{\frac{\alpha\theta}{\tau(\theta-p)}}\Bigr)^{\frac{\theta-p}{\theta}}
\\
\le C_6(\alpha, \theta, \tau)
[W_B(X)]^{\alpha/\tau} [\log m]^{\frac{\theta-p}{\theta}}.
\end{multline*}

$1)$ We take $D=B$, $q=\theta$, $\beta=\alpha$, and any $\tau\in[\max\{\alpha, 2\}, \infty)$.
Since for $p\in(1, \theta)$ one has $1-\frac{p}{\theta}<1-\frac{1}{\theta}$,
Lemma~\ref{L0} and the above bounds imply that there is a
constant $C_7:=C_7(p, \theta, \zeta, \alpha, \tau)$ such that
$$
\mathbb{E}_\varepsilon\sup\limits_{f\in B}\Bigl|\sum\limits_{j=1}^m\varepsilon_j|f(X_j)|^p\Bigr|
\le
C_7[W_B(X)]^{\alpha/\tau}\sup\limits_{h\in B}\|h\|_{\infty, X}^{\frac{p-\alpha}{\tau}}
\sup\limits_{f\in B}\Bigl(\sum_{j=1}^{m}|f(X_j)|^{p}
\Bigr)^{1-\frac{1}{\tau}}
[\log m]^{1-\frac{1}{\theta}}.
$$
Lemma \ref{Lem-1} implies that there is a constant $C_8:=C_8(p, \theta, \zeta, \alpha, \tau)$
such that
$$
\mathbb{E}\sup\limits_{f\in B}\Bigl|\frac{1}{m}\sum\limits_{j=1}^m|f(X_j)|^p - \|f\|_p^p\Bigr|
\le C_8\bigl(A_\tau +
A_\tau^{1/\tau}(\sup\limits_{f\in B}\mathbb{E}|f(X_1)|^p)^{1-\frac{1}{\tau}}\bigr),
$$
where
$$
A_\tau= \frac{[\log m]^{\tau(1-\frac{1}{\theta})}}{m}\mathbb{E}\bigl([W_B(X)]^\alpha
\sup\limits_{f\in B}\max\limits_{1\le j\le m}|f(X_j)|^{p-\alpha}\bigr).
$$
Since the least power of logarithm is achieved for the minimal possible $\tau$
we take $\tau=\max\{\alpha, 2\}$
and get the first claim of the theorem.

$2)$ We take $\tau=\beta$. Then by Lemma~\ref{L0} and by the above bounds,
one can find a
constant $C_9:=C_9(p, \theta, \zeta, q, \eta, \alpha, \beta)$ such that
for $p\in(1, \theta)$ one has
\begin{multline*}
\mathbb{E}_\varepsilon\sup\limits_{f\in B}\Bigl|\sum\limits_{j=1}^m\varepsilon_j|f(X_j)|^p\Bigr|
\le
C_9
\sup\limits_{f\in B}\Bigl(\sum_{j=1}^{m}|f(X_j)|^{p}\Bigr)^{1-\frac{1}{\beta}}
\\
\times\Bigl(
\sup\limits_{h\in B}\|h\|_{\infty, X}^{\frac{p-\beta}{\beta}}
W_D(X)[\log m]^{1-\frac{1}{q}}+
[W_B(X)]^{\alpha/\beta}\sup\limits_{h\in B}\|h\|_{\infty, X}^{\frac{p-\alpha}{\beta}}
[\log m]^{1-\frac{p}{\theta}}\Bigr)
\end{multline*}
and for $p\in[\theta, \infty)$ one has
\begin{multline*}
\mathbb{E}_\varepsilon\sup\limits_{f\in B}\Bigl|\sum\limits_{j=1}^m\varepsilon_j|f(X_j)|^p\Bigr|
\le
C_9
\sup\limits_{f\in B}\Bigl(\sum_{j=1}^{m}|f(X_j)|^{p}\Bigr)^{1-\frac{1}{\beta}}
\\
\times\Bigl(
\sup\limits_{h\in B}\|h\|_{\infty, X}^{\frac{p-\beta}{\beta}}
W_D(X)[\log m]^{1-\frac{1}{q}}+
[W_B(X)]^{\alpha/\beta}\sup\limits_{h\in B}\|h\|_{\infty, X}^{\frac{p-\alpha}{\beta}}\Bigr).
\end{multline*}
Lemma \ref{Lem-1} now implies the second claim of the theorem.

$3)$ The Euclidean ball is $2$-convex.
We take $q=2$ and $\tau=2$. By Lemma~\ref{L1}(1), one has
\begin{multline*}
\sup\limits_{h\in B}\Bigl[\sum\limits_{k=0}^\infty\bigl(2^{k/2}
e_k(D, \|\cdot\|_{2, X;|h|^{2(p-1)}})\bigr)^2\Bigr]^{1/2}
\le
\bigl[\sup\limits_{f\in D}\|f\|_{\infty, X}\bigr]
\sup\limits_{h\in B}\Bigl(\sum\limits_{j=1}^{m}|h(X_j)|^{2(p-1)}\Bigr)^{1/2}
\\
\le \bigl[\sup\limits_{f\in D}\|f\|_{\infty, X}\bigr]\cdot
\bigl[\sup\limits_{h\in B}\|h\|_{\infty, X}^{\frac{p}{2}-1}\bigr]
\sup\limits_{h\in B}\Bigl(\sum\limits_{j=1}^{m}|h(X_j)|^{p}\Bigr)^{1/2}.
\end{multline*}
By Lemma \ref{L0} and by the above bounds, there is a number
$C_{10}:=C_{10}(p, \theta, \zeta, \alpha)$ such that
\begin{multline*}
\mathbb{E}_\varepsilon\sup\limits_{f\in B}\Bigl|\sum\limits_{j=1}^m\varepsilon_j|f(X_j)|^p\Bigr|
\le
C_{10}\sup\limits_{h\in B}\Bigl(\sum\limits_{j=1}^{m}|h(X_j)|^{p}\Bigr)^{1/2}
\\
\times\Bigl(
\sup\limits_{f\in D}\|f\|_{\infty, X}
\sup\limits_{h\in B}\|h\|_{\infty, X}^{\frac{p}{2}-1}
+
[W_B(X)]^{\alpha/2}\sup\limits_{h\in B}\|h\|_{\infty, X}^{\frac{p-\alpha}{2}}
[\log m]^{\max\{1-\frac{p}{\theta}, 0\}}
\Bigr).
\end{multline*}
Lemma \ref{Lem-1} now implies the third claim of the theorem.
\end{proof}

\begin{remark}\label{rem1}
{\rm
It follows from the proof that in the previous theorem
we need the assumptions
$$
e_k(B, \|\cdot\|_{\infty, X})\le W_B(X)2^{-k/\alpha}\quad
e_k(D, \|\cdot\|_{\infty, X})\le W_D(X)2^{-k/\beta}
$$
only for $k\le \log m$.
Actually, if $L$ is an $N$-dimensional subspace and $N\le m$
(as it is in the most cases we consider),
we need the above entropy numbers decay assumptions only for $k\le \log N$.
In that case in the above theorem all instances of $\log m$
should be replaced with $\log N$.
}
\end{remark}

\begin{remark}\label{rem-Dud-bound}{\rm
We note that under the assumptions of Theorem \ref{T0},
instead of Theorem \ref{T-VH}
one could use a simpler Dudley's entropy bound (see \cite[Proposition 2.2.10]{Tal})
to estimate the expectation of the
supremum of the Bernoulli process from Lemma \ref{L0}.
By this bound, applied with a
quasi-metric
$\varrho_\tau(f,g)$ for some fixed $\tau\ge\max\{\alpha, 2\}$
(see formula \eqref{eq-a-metric}), we have
\begin{multline*}
\mathbb{E}_\varepsilon\sup\limits_{f\in B}\Bigl|\sum\limits_{j=1}^m\varepsilon_j|f(X_j)|^p\Bigr|
\le C\sum\limits_{k=0}^\infty 2^{k/\tau}e_k(B, \varrho_\tau)
\\
\le
C_1\sup_{f,h\in B}
\Bigl(\sum\limits_{j=1}^m|f(X_j)|^{\tau'(1-\frac{\alpha}{\tau})}|h(X_j)|^{\tau'(p-1)}\Bigr)^{1/\tau'}
\sum\limits_{k=0}^\infty 2^{k/\tau}e_k(B, \|\cdot\|_{\infty, X}^{\alpha/\tau})
\\
\le\! C_2\!\!\sup_{f,h\in B}
\Bigl(\sum\limits_{j=1}^m|f(X_j)|^{\frac{p\tau-\alpha}{\tau-1}}
\Bigr)^{\frac{\tau-\alpha}{\tau'(p\tau-\alpha)}}\!
\Bigl(\sum\limits_{j=1}^m|h(X_j)|^{\frac{p\tau-\alpha}{\tau-1}}
\Bigr)^{\frac{p\tau-\tau}{\tau'(p\tau-\alpha)}}
W_B(X)^{\alpha/\tau}\log m
\\
=
C_2\sup_{f\in B}
\Bigl(\sum\limits_{j=1}^m|f(X_j)|^{\frac{p\tau-\alpha}{\tau-1}}\Bigr)^{1/\tau'}
W_B(X)^{\alpha/\tau}\log m,
\end{multline*}
where in the second estimate we have used
H$\ddot{o}$lder's inequality and
applied Lemma \ref{L-bound}.
Here numbers $C, C_1, C_2$ depend only on parameters
$\alpha, p, \tau$.
Assume that $p\ge \alpha$, then
$$
\Bigl(\sum\limits_{j=1}^m|f(X_j)|^{\frac{p\tau-\alpha}{\tau-1}}\Bigr)^{1/\tau'}
\le \|f\|_{\infty, X}^{\frac{p-\alpha}{\tau}}
\Bigl(\sum\limits_{j=1}^m|f(X_j)|^p\Bigr)^{1/\tau'}
$$
implying
$$
\mathbb{E}_\varepsilon\sup\limits_{f\in B}\Bigl|\sum\limits_{j=1}^m\varepsilon_j|f(X_j)|^p\Bigr|
\le\!
C_2\!\sup_{h\in B}\|h\|_{\infty, X}^{\frac{p-\alpha}{\tau}}
\sup_{f\in B}
\Bigl(\sum\limits_{j=1}^m|f(X_j)|^p\Bigr)^{1-\frac{1}{\tau}}\!
W_B(X)^{\alpha/\tau}\log m.
$$
By Lemma \ref{Lem-1}, taking the minimal possible $\tau=\max\{\alpha, 2\}$
(to minimize the power of logarithm), we get
$$
\mathbb{E}\sup\limits_{f\in B}\Bigl|\frac{1}{m}\sum\limits_{j=1}^m|f(X_j)|^p - \|f\|_p^p\Bigr|
\le C\bigl(A +
A^{\frac{1}{\max\{\alpha, 2\}}}
(\sup\limits_{f\in B}\mathbb{E}|f(X_1)|^p)^{1-\frac{1}{\max\{\alpha, 2\}}}\bigr),
$$
where
$$
A= \frac{[\log m]^{\max\{\alpha, 2\}}}{m}
\mathbb{E}\bigl([W_B(X)]^\alpha
\sup\limits_{f\in B}\max\limits_{1\le j\le m}|f(X_j)|^{p-\alpha}\bigr).
$$
This bound is valid for any convex set $B\subset L$ (not necessarily $\theta$-convex),
but omitting the additional information about $\theta$-convexity
we lose $[\log m]^{\frac{\max\{\alpha, 2\}}{\theta}}$ factor.
}
\end{remark}

Theorem \ref{T0} already provides the
following conditional result for the problem of sampling discretization.

\begin{corollary}
Let $p\in(1,\infty)$
and let $L$ be any $N$-dimensional subspace of~$L^p(\mu)\cap C(\Omega)$
for some Borel probability measure $\mu$
on a compact set $\Omega$.
Let $B_p(L):=\{f\in L\colon \|f\|_p\le1\}$.
Assume that for any fixed discrete set of $m\ge N$ points $X=\{X_1, \ldots, X_m\}$
there is a constant $W(X)$ such that
$$
e_k(B_p(L), \|\cdot\|_{\infty, X})\le W(X)2^{-k/p}.
$$
Then
there is a number $C:=C(p)$ such that
$$
\mathbb{E}\sup\limits_{f\in B_p(L)}\Bigl|\frac{1}{m}\sum\limits_{j=1}^m|f(X_j)|^p - \|f\|_p^p\Bigr|
\le C\bigl(A + A^{\frac{1}{\max\{p, 2\}}}\bigr),
$$
where
$$
A= \frac{[\log N]^{\max\{p, 2\}-1}}{m}
\mathbb{E}\bigl([W(X)]^p\bigr).
$$
In particular, there is a large enough constant $c(p)$ such that
for every $\delta\in(0, 1)$, for every $\varepsilon\in(0, 1)$
and for every $m\ge N$ such that
$$m\ge c(p)(\delta\varepsilon)^{-\max\{p, 2\}}
\mathbb{E}\bigl([W(X)]^p\bigr)[\log N]^{\max\{p, 2\}-1}$$
one has
$$
(1-\varepsilon)\|f\|_p^p\le \sum_{j=1}^{m}|f(X_j)|^p\le (1+\varepsilon)\|f\|_p^p\quad \forall f\in L
$$
with probability greater than $1-\delta$.
\end{corollary}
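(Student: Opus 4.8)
The plan is to obtain the first assertion as an immediate specialization of Theorem~\ref{T0}(1) to $B=B_p(L)$ with $\alpha=p$ and $\theta=\max\{p,2\}$, and then to pass to the high-probability conclusion via Chebyshev's inequality.

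To set up the application I would first recall the classical fact that the $L^p$-norm restricted to $L$ turns $B_p(L)$ into a symmetric $\max\{p,2\}$-convex body: for $p\ge 2$ Clarkson's inequality gives $\bigl\|\tfrac{f+g}{2}\bigr\|_p\le\max(\|f\|_p,\|g\|_p)-\eta(p)\|f-g\|_p^{\,p}$, while for $p\in(1,2)$ the corresponding quadratic uniform-convexity inequality for $L^p$-norms yields the same estimate with the exponent on $\|f-g\|_p$ replaced by $2$, and in either case the convexity constant $\zeta=\zeta(p)$ depends only on $p$. With $\alpha=p$, the hypothesis $e_k(B_p(L),\|\cdot\|_{\infty,X})\le W(X)2^{-k/p}$ is exactly the entropy-decay assumption of Theorem~\ref{T0} with $W_B(X)=W(X)$, and the requirement $p\ge\alpha$ holds trivially, so Theorem~\ref{T0}(1) applies. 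Now $p-\alpha=0$, so the factor $\sup_{f\in B_p(L)}\max_{1\le j\le m}|f(X_j)|^{p-\alpha}$ equals $1$ and $[W_B(X)]^\alpha=[W(X)]^p$; moreover $\max\{\alpha,2\}\bigl(1-\tfrac1\theta\bigr)=\max\{p,2\}-1$. Hence, with $C=C\bigl(p,\max\{p,2\},\zeta(p),p\bigr)=C(p)$ and $A_m:=\frac{[\log m]^{\max\{p,2\}-1}}{m}\,\mathbb{E}\bigl[W(X)^p\bigr]$, Theorem~\ref{T0}(1) produces the bound $\mathbb{E}\bigl[V_p(B_p(L))\bigr]\le C\bigl(A_m+A_m^{1/\max\{p,2\}}(\sup_{f\in B_p(L)}\mathbb{E}|f(X_1)|^p)^{1-1/\max\{p,2\}}\bigr)$. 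Since $\sup_{f\in B_p(L)}\mathbb{E}|f(X_1)|^p=\sup_{\|f\|_p\le 1}\|f\|_p^p\le 1$ the trailing factor disappears, and since $L$ is $N$-dimensional with $N\le m$, Remark~\ref{rem1} lets me replace $\log m$ by $\log N$ (the entropy assumption being needed only for $k\le\log N$); this gives precisely the quantity $A$ of the statement together with the bound $\mathbb{E}\bigl[V_p(B_p(L))\bigr]\le C\bigl(A+A^{1/\max\{p,2\}}\bigr)$.

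For the second assertion I would apply Chebyshev's inequality, $P\bigl(V_p(B_p(L))\ge\varepsilon\bigr)\le\varepsilon^{-1}\mathbb{E}\bigl[V_p(B_p(L))\bigr]\le C\varepsilon^{-1}\bigl(A+A^{1/\max\{p,2\}}\bigr)$. Taking $c(p)$ large enough---concretely $c(p)\ge\bigl(2C(p)\bigr)^{\max\{p,2\}}$---the hypothesis $m\ge c(p)(\delta\varepsilon)^{-\max\{p,2\}}\mathbb{E}[W(X)^p][\log N]^{\max\{p,2\}-1}$ forces $A\le c(p)^{-1}(\delta\varepsilon)^{\max\{p,2\}}\le 1$, hence $A+A^{1/\max\{p,2\}}\le 2A^{1/\max\{p,2\}}\le 2c(p)^{-1/\max\{p,2\}}\delta\varepsilon$ and therefore $C\varepsilon^{-1}\bigl(A+A^{1/\max\{p,2\}}\bigr)\le\delta$. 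On the complementary event, of probability greater than $1-\delta$, we have $V_p(B_p(L))<\varepsilon$, which---since $\|f\|_p^p$ and $\frac1m\sum_{j=1}^m|f(X_j)|^p$ are both positively homogeneous of degree $p$ in $f$---is equivalent to $(1-\varepsilon)\|f\|_p^p\le\frac1m\sum_{j=1}^m|f(X_j)|^p\le(1+\varepsilon)\|f\|_p^p$ for every $f\in L$. No genuine difficulty arises: the corollary is bookkeeping on top of Theorem~\ref{T0}(1), the only points worth spelling out being the $\max\{p,2\}$-convexity of $B_p(L)$ with a $p$-dependent constant and the passage from $\log m$ to $\log N$ furnished by Remark~\ref{rem1}.
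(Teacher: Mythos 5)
Your proposal is correct and follows exactly the paper's route: the first assertion is Theorem~\ref{T0}(1) applied with $B=B_p(L)$, $\alpha=p$, $\theta=\max\{p,2\}$ (using the $\max\{p,2\}$-convexity of the $L^p$-norm), with $\log m$ replaced by $\log N$ via Remark~\ref{rem1}, and the second assertion is Chebyshev's inequality. The bookkeeping ($p-\alpha=0$, $\sup_{f\in B_p(L)}\mathbb{E}|f(X_1)|^p\le 1$, the exponent $\max\{p,2\}-1$, and the choice of $c(p)$) is all as intended.
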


\begin{proof}
The first part follows from Theorem \ref{T0}(1) and Remark \ref{rem1}
since the $L^p$-norm is $\max\{p, 2\}$-convex with some constant $\zeta(p)$.
The second part is just the application of Chebyshev's inequality.
\end{proof}

\begin{remark}
{\rm
We note that such conditional result is already applicable in many situations,
since in many cases one can independently obtain bounds for the entropy numbers
even with respect to the uniform norm $\|\cdot\|_\infty$ in place of discretized uniform norm
$\|\cdot\|_{\infty, X}$.
For example, this is the case for the so called hyperbolic cross trigonometric polynomials
(see \cite{Tem2} and \cite{Tem6}).
}
\end{remark}

To obtain general results without explicit assumptions
on the entropy numbers
one needs to use general bounds for the entropy numbers.
We will do in the next section.

\section{Discretization under the Nikolskii-type inequality assumption}

First of all, there is a bound for the entropy numbers of a general $\theta$-convex set
with respect to the discretized uniform norm $\|\cdot\|_{\infty, X}$
(see \cite[Lemma~16.5.4]{Tal} and \cite{Tal-Sect}).
We recall this bound in the form it is stated in \cite{Tal} and then
reformulate it for our case.
\begin{lemma}[see Lemma~16.5.4 in \cite{Tal}]\label{Lem-Tal}
Let $(E,\|\cdot\|_E)$ be a Banach space and let the norm
$\|\cdot\|_{E^*}$ in the dual space $E^*$
be $\theta$-convex with some constant $\zeta>0$ for some $\theta\ge2$.
For a fixed set of vectors $\Phi:=\{\varphi_1, \ldots, \varphi_m\}$, consider
a (semi)norm $\|\varphi^*\|_{\infty, \Phi}:=\max\limits_{1\le j\le m}|\varphi^*(\varphi_j)|$ on~$E^*$.
Then for some number $C:=C(\theta,\zeta)$, which depends only on $\theta$ and $\zeta$, one has
$$
e_k(B_*,\|\cdot\|_{\infty, \Phi})\le
C\bigl[\max\limits_{1\le j\le m}\|\varphi_j\|_{E}\bigr] 2^{-k/\theta}[\log m]^{1/\theta},
$$
where $B_*:=\{\varphi^*\in E^*\colon \|\varphi^*\|_{E^*}\le 1\}$.
\end{lemma}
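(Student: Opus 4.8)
The plan is the following. By homogeneity (replace each $\varphi_j$ by $\varphi_j/\max_i\|\varphi_i\|_E$) it suffices to treat the normalized case $\max_{1\le j\le m}\|\varphi_j\|_E\le 1$, where the assertion becomes $e_k(B_*,\|\cdot\|_{\infty,\Phi})\le C(\theta,\zeta)\,2^{-k/\theta}[\log m]^{1/\theta}$. The first observation is that, with $V:=\mathrm{conv}\{\pm\varphi_1,\dots,\pm\varphi_m\}\subset B_E$, one has $\|\varphi^*\|_{\infty,\Phi}=\sup_{v\in V}\varphi^*(v)$, so the unit ball of the seminorm $\|\cdot\|_{\infty,\Phi}$ on $E^*$ is exactly the polar $V^{\circ}$; since $V\subset B_E$ we get $V^{\circ}\supset B_*$, which already handles $k=0$. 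Thus the task reduces to bounding the covering numbers $N(B_*,\varepsilon V^{\circ})$ and converting the bound into the stated decay of entropy numbers.

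The heart of the matter is the \emph{primal} covering estimate for $V$ itself, namely a bound on $N(V,\delta B_E)$, and this is where the hypothesis enters. Up to the value of the constant, the assumption that $E^*$ is $\theta$-convex (Definition~\ref{D-q-conv}) is uniform convexity of $E^*$ of power type $\theta$, hence $E$ is uniformly smooth of power type $\theta'$ with $\tfrac1\theta+\tfrac1{\theta'}=1$, and in particular $E$ has Rademacher type $\theta'$ with a constant $\tau=\tau(\theta,\zeta)$. Now I would apply Maurey's empirical (selection) method: given $v\in V$, write $v=\mathbb{E}Z$ for a random vector $Z$ with values in $\{\pm\varphi_1,\dots,\pm\varphi_m\}$, draw $n$ independent copies $Z_1,\dots,Z_n$, and estimate $\mathbb{E}\bigl\|\tfrac1n\sum_{i\le n}Z_i-v\bigr\|_E$ by symmetrization followed by the type-$\theta'$ inequality; since $\|Z_i-v\|_E\le 2$ and $1-\tfrac1{\theta'}=\tfrac1\theta$, this is at most $C\tau n^{-1/\theta}$. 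Taking $n\asymp(\tau/\delta)^{\theta}$ produces, for every $v\in V$, an $n$-term average of the $\pm\varphi_j$ lying within $\delta$ of $v$ in $E$; there are at most $(2m)^n$ such averages, so $\log N(V,\delta B_E)\le (C\tau/\delta)^{\theta}\log m$.

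It remains to pass from this primal estimate to the dual covering number $N(B_*,\varepsilon V^{\circ})$ and then read off the entropy numbers. For the transfer one may invoke the general duality of entropy numbers (in the circle of results of Bourgain, Pajor, Szarek, Tomczak-Jaegermann and of Artstein-Avidan, Milman, Szarek), which gives $\log N(B_*,\varepsilon V^{\circ})\le C\log N(V,c\varepsilon B_E)$ with absolute $C,c$. Feeding in the bound from the previous paragraph yields $\log N(B_*,\varepsilon V^{\circ})\le (C'\tau/\varepsilon)^{\theta}\log m$, and choosing $\varepsilon$ so that the right-hand side equals $2^k$ gives precisely $e_k(B_*,\|\cdot\|_{\infty,\Phi})\le C(\theta,\zeta)\,2^{-k/\theta}[\log m]^{1/\theta}$, with $C(\theta,\zeta)$ proportional to $\tau$.

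The step I expect to be the main obstacle is this primal-to-dual transfer: quoting the general entropy-number duality is clean but rests on a deep theorem, so for a self-contained argument one would instead run the selection step directly against the $m$ coordinate functionals inside $\ell_\infty^m=C(\{1,\dots,m\})$ — i.e. approximate the image of $B_*$ in the $\ell_\infty^m$-norm by empirical averages, where the convergence rate already incorporates the maximum over the $m$ coordinates. In either formulation the factor $[\log m]^{1/\theta}$ is produced by exactly the same mechanism: the logarithm of the size $2m$ of the generating set, equivalently the union bound over the $m$ coordinates, which is what turns a dimension-free estimate into one depending on $m$ only through $\log m$. The remaining ingredients — the homogeneity reduction, the identification of $\|\cdot\|_{\infty,\Phi}$ with the support function of $V$, the fact that $\theta$-convexity of $E^*$ gives type $\theta'$ of $E$, and the passage from covering numbers to entropy numbers — are routine.
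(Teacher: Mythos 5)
The paper does not prove this lemma at all: it is quoted verbatim from Talagrand (Lemma~16.5.4 in \cite{Tal}), and the only indication of a proof is the remark following Corollary~\ref{ent-est}, which sketches exactly the two-step scheme you propose --- reduce by duality of entropy numbers to covering the convex hull $U=\mathrm{conv}\{\pm\varphi_1,\dots,\pm\varphi_m\}$ inside $B_E$, then cover $U$ by $n$-term averages of the generators with rate $n^{-1/\theta}$ coming from the power-type-$\theta'$ smoothness of $E$ (the paper's remark does this deterministically via the Weak Chebyshev Greedy Algorithm, you do it probabilistically via Maurey's empirical method; these are interchangeable). So in substance your route is the intended one.

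There is, however, one step you should not leave as stated: the primal-to-dual transfer. The duality of entropy numbers with \emph{absolute} constants $C,c$ is not a theorem --- it is an open conjecture in general, and the Artstein-Avidan--Milman--Szarek result you allude to settles only the case where one of the two bodies is an ellipsoid, which is not your situation unless $\theta=2$. What is actually available, and what both Talagrand and the paper's remark invoke, is the $K$-convex case (iterations of Proposition~2 of \cite{BPST-J}): since $E$ is uniformly smooth of power type $\theta'>1$ it is $B$-convex, hence $K$-convex with constant controlled by $(\theta,\zeta)$, and the duality then holds with $C,c$ depending on $(\theta,\zeta)$ --- which is all the lemma needs, but the dependence must be acknowledged and the correct reference used. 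Your proposed ``self-contained'' fallback does not repair this: Maurey's selection method requires the set being covered to be the convex hull of $O(m)$ generators, and $B_*$ is not such a set, so you cannot ``run the selection step directly'' on the image of $B_*$ in $\ell_\infty^m$; the duality (or an equivalent iteration scheme) is genuinely where the work lies. With the transfer step attributed to \cite{BPST-J} rather than to a nonexistent absolute-constant duality theorem, the argument is complete.
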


\begin{corollary}\label{ent-est}
Let $L$ be a linear space of functions defined on some set $X=\{X_1, \ldots, X_m\}$
and let $B\subset L$
be a $\theta$-convex body with some constant $\zeta>0$.
Then there is a constant $C:=C(\theta, \zeta)$ such that
$$
e_k(B,\|\cdot\|_{\infty, X})\le
C\bigl[\max\limits_{1\le j\le m}\sup\limits_{f\in B}|f(X_j)|\bigr] 2^{-k/\theta}[\log m]^{1/\theta}.
$$
\end{corollary}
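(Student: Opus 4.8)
The plan is to obtain Corollary \ref{ent-est} as a direct specialization of Lemma \ref{Lem-Tal}, once $L$ is identified with a dual Banach space and the points $X_j$ are encoded as evaluation functionals. First I would note that $L$, being a space of functions on the finite set $X$, is finite dimensional and hence reflexive, so it is canonically the second dual of $E:=L^{*}$. Since $B$ is a $\theta$-convex body with constant $\zeta>0$, by definition it is the unit ball of a genuine $\theta$-convex norm $\|\cdot\|_{B}$ on $L$; I would equip $E=L^{*}$ with the dual norm $\|\cdot\|_{E}:=\|\cdot\|_{B}^{*}$, so that $(E^{*},\|\cdot\|_{E^{*}})=(L,\|\cdot\|_{B})$ isometrically and $B_{*}:=\{\varphi^{*}\in E^{*}\colon\|\varphi^{*}\|_{E^{*}}\le1\}=B$. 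By hypothesis $\|\cdot\|_{E^{*}}=\|\cdot\|_{B}$ is $\theta$-convex with constant $\zeta$, so all the hypotheses of Lemma \ref{Lem-Tal} are satisfied.

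Next I would take $\varphi_{j}\in E=L^{*}$ to be the evaluation functional $\varphi_{j}(f):=f(X_{j})$, $f\in L$, for $1\le j\le m$, and set $\Phi:=\{\varphi_{1},\ldots,\varphi_{m}\}$. For $\varphi^{*}=f\in E^{*}=L$ one then has $\|\varphi^{*}\|_{\infty,\Phi}=\max_{1\le j\le m}|\varphi^{*}(\varphi_{j})|=\max_{1\le j\le m}|f(X_{j})|=\|f\|_{\infty,X}$, so the seminorm $\|\cdot\|_{\infty,\Phi}$ of Lemma \ref{Lem-Tal} coincides with $\|\cdot\|_{\infty,X}$ on $L$. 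Since $B_{*}=B$ as a subset of $L$, it follows that $e_{k}(B_{*},\|\cdot\|_{\infty,\Phi})=e_{k}(B,\|\cdot\|_{\infty,X})$ for all $k$, because entropy numbers depend only on the set and the metric.

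It then remains only to identify the quantity $\max_{1\le j\le m}\|\varphi_{j}\|_{E}$ appearing in Lemma \ref{Lem-Tal}. By definition of the dual norm, $\|\varphi_{j}\|_{E}=\|\varphi_{j}\|_{B}^{*}=\sup_{f\in B}|\varphi_{j}(f)|=\sup_{f\in B}|f(X_{j})|$, hence $\max_{1\le j\le m}\|\varphi_{j}\|_{E}=\max_{1\le j\le m}\sup_{f\in B}|f(X_{j})|$. Substituting the two identifications into the conclusion of Lemma \ref{Lem-Tal},
$$
e_{k}(B_{*},\|\cdot\|_{\infty,\Phi})\le C(\theta,\zeta)\bigl[\max_{1\le j\le m}\|\varphi_{j}\|_{E}\bigr]2^{-k/\theta}[\log m]^{1/\theta},
$$
gives exactly the asserted inequality, with the same constant $C(\theta,\zeta)$.

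The argument is essentially bookkeeping once the dualization is set up, so I do not anticipate a serious obstacle; the only points deserving a line of care are the finite-dimensionality (hence reflexivity) of $L$, which is what lets us write $L=E^{*}=(L^{*})^{*}$ with matching norms, and the fact that $\|\cdot\|_{B}$ is a norm rather than a mere seminorm on $L$, which is built into the definition of a $\theta$-convex body and ensures the duality pairing behaves as expected.
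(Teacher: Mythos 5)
Your proposal is correct and follows essentially the same route as the paper's own proof: identify the finite-dimensional space $L$ with $E^{*}$ for $E:=L^{*}$ equipped with the dual norm of $\|\cdot\|_{B}$, encode the points as evaluation functionals $\varphi_{j}(f)=f(X_{j})$, and apply Lemma \ref{Lem-Tal} after noting $\|f\|_{\infty,\Phi}=\|f\|_{\infty,X}$ and $\|\varphi_{j}\|_{E}=\sup_{f\in B}|f(X_{j})|$. No gaps.
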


\begin{proof}
We note that $B$ is the unit ball of some $\theta$-convex norm $\|\cdot\|_L$.
Let $E$ be the dual space (with respect to this norm) to $L$, i.e. $E=L^*$.
Then $L=E^*$ ($L$ is finite dimensional) and functionals
$\varphi_j(f):=f(X_j)$ are elements of $L^*=E$.
Thus, we take $\Phi:=\{\varphi_1, \ldots, \varphi_m\} \subset E$ and
by the above lemma one has
$$
e_k(B,\|\cdot\|_{\infty, \Phi})\le
K\bigl[\max\limits_{1\le j\le m}\|\varphi_j\|_{E}\bigr] 2^{-k/\theta}[\log m]^{1/\theta}.
$$
It remains to notice that
$\|f\|_{\infty, \Phi} = \max\limits_{1\le j\le m}|f(X_j)|$ for each $f\in L$
and that $\|\varphi_j\|_E = \sup\limits_{f\in B}|f(X_j)|$.
The corollary is proved.
\end{proof}

\begin{remark}
{\rm
It is interesting to note that
one can obtain Lemma \ref{Lem-Tal} from the greedy approximation theory.
Without loss of generality, we assume that $\|\varphi_j\|_E=1$, $\forall j\in\{1,\ldots, m\}$.
Let $U$ be a convex hull of $\pm x_1,\ldots,\pm x_m$.
The first step is the same as in Talagrand's work \cite[Lemma 3.3]{Tal-Sect}:
by iterations of Proposition~$2$ from \cite{BPST-J}
the desired estimate follows from the bound
$$e_k(U, \|\cdot\|_E)\le K(p,\eta) 2^{-k/\theta}[\log m]^{1/\theta}.$$
And now this bound can be deduced from the bound for the best $n$-term approximation:
let $\mathcal{D}=\{y_j\}$ be a set of $r$ points in $E$, then
$$
\sigma_n(U,\mathcal{D}):=\sup\limits_{y\in U}\inf\limits_{\{c_j\}, |\Lambda|=n}\|y-\sum_{j\in\Lambda}c_jy_j\|_E.
$$
It is known (see \cite[Theorem 7.4.3]{TemBook} and \cite[Theorem 3.1]{Tem5}) that
$e_k(U, \|\cdot\|)\le C(\omega)A[\log 2r]^{\omega}2^{-\omega k}$ for every $k\le \log r$
provided that there is a system $\mathcal{D}$ of $r$ elements such that
$\sigma_n(U,\mathcal{D})\le A n^{-\omega}$ for every $n\le r$.
We note that the unit ball in the dual space $E$ is $\theta' = \frac{\theta}{\theta-1}$-smooth.
Now taking $\mathcal{D}=\{\pm x_1,\ldots, \pm x_m\}$ and
applying Weak Chebyshev Greedy Algorithm (see \cite[Section 6.2]{TemBook2}),
we get
$\sigma_n(U,D)\le C(\theta,\eta) n^{-1/\theta}$ (see \cite[Theorem 6.8]{TemBook2}).
Thus, for $k\le\log m$, one has
$$e_k(U, \|\cdot\|)\le C_1(\theta,\eta)[\log 4m]^{1/\theta}2^{-k/\theta}\le
C_2(p,\eta)[\log m]^{1/\theta}2^{-k/\theta}.$$
See more on this observation in \cite{Tem7}.
}
\end{remark}

Corollary \ref{ent-est} combined with Theorem \ref{T0}(3)
already allows to improve the main
result of \cite{GueRud} and combined with Theorem \ref{T0}(1)
provides several results for the sampling discretization problem.

\begin{corollary}\label{cor1}
Let $\theta\ge 2$, $p\in[\theta,\infty)$,
and let $L$ be a subspace of~$L^p(\mu)\cap C(\Omega)$
for some Borel probability measure $\mu$
on a compact set $\Omega$.
Let $B\subset L$
be a
symmetric
$\theta$-convex (with a constant $\zeta>0$) body
and assume that there is an Euclidean ball $D\subset L$
such that $B\subset D$.
Then
there is a constant $C:=C(p, \theta, \zeta)$ such that
$$
\mathbb{E}\sup\limits_{f\in B}\Bigl|\frac{1}{m}\sum\limits_{j=1}^m|f(X_j)|^p - \|f\|_p^p\Bigr|
\le C\bigl(A + A^{1/2}
(\sup\limits_{f\in B}\mathbb{E}|f(X_1)|^p)^{1/2}\bigr),
$$
where
$$
A=
\frac{1}{m}\mathbb{E}\bigl(\sup\limits_{f\in D}\max\limits_{1\le j\le m}|f(X_j)|^2
\sup\limits_{h\in B}\max\limits_{1\le j\le m}|h(X_j)|^{p-2}\bigr)
+
\frac{\log m}{m}\mathbb{E}\bigl(\sup\limits_{h\in B}
\max\limits_{1\le j\le m}|h(X_j)|^{p}\bigr).
$$
\end{corollary}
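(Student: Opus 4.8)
The plan is to obtain Corollary~\ref{cor1} as a direct application of Theorem~\ref{T0}(3), feeding in the entropy estimate of Corollary~\ref{ent-est}. First I would check that all hypotheses of Theorem~\ref{T0}(3) are in place with the choice $\alpha=\theta$: the set $B$ is a symmetric $\theta$-convex body with $\theta\ge2$, it is contained in the Euclidean ball $D\subset L$, and the requirement $p\ge\max\{\alpha,2\}$ is exactly the standing assumption $p\ge\theta$ (recall $\theta\ge2$). It then remains to identify the constant $W_B(X)$: by Corollary~\ref{ent-est} there is $C_0:=C_0(\theta,\zeta)$ such that
\[
e_k(B,\|\cdot\|_{\infty,X})\le C_0\bigl[\max_{1\le j\le m}\sup_{f\in B}|f(X_j)|\bigr]\,2^{-k/\theta}[\log m]^{1/\theta},
\]
so one may take $W_B(X)=C_0\bigl[\max_{1\le j\le m}\sup_{f\in B}|f(X_j)|\bigr][\log m]^{1/\theta}$ together with $\alpha=\theta$ in Theorem~\ref{T0}(3).

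Next I would simplify the quantity $A$ produced by Theorem~\ref{T0}(3). Since $p\ge\theta$ we have $\max\{1-\tfrac{p}{\theta},0\}=0$, so the logarithmic prefactor of the second summand of that $A$ is $1$. Raising $W_B(X)$ to the power $\alpha=\theta$ gives $[W_B(X)]^{\theta}=C_0^{\theta}\bigl[\max_{1\le j\le m}\sup_{f\in B}|f(X_j)|\bigr]^{\theta}\log m$, so the second summand of the Theorem's $A$ equals
\[
\frac{C_0^{\theta}\log m}{m}\,\mathbb{E}\Bigl(\bigl[\max_{1\le j\le m}\sup_{f\in B}|f(X_j)|\bigr]^{\theta}\sup_{h\in B}\max_{1\le j\le m}|h(X_j)|^{p-\theta}\Bigr).
\]
The key elementary observation is that, for each realization of $X$, $\max_{1\le j\le m}\sup_{f\in B}|f(X_j)|\le\sup_{f\in B}\max_{1\le j\le m}|f(X_j)|=\sup_{h\in B}\max_{1\le j\le m}|h(X_j)|$, since for a fixed $f$ the maximum over $j$ dominates each coordinate. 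Combined with $p-\theta\ge0$ this gives $\bigl[\max_{1\le j\le m}\sup_{f\in B}|f(X_j)|\bigr]^{\theta}\sup_{h\in B}\max_{1\le j\le m}|h(X_j)|^{p-\theta}\le\sup_{h\in B}\max_{1\le j\le m}|h(X_j)|^{p}$, so the second summand of the Theorem's $A$ is at most $C_0^{\theta}$ times the second summand of the $A$ in the statement; the first summands coincide verbatim.

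Finally, I would note that the $A$ of Theorem~\ref{T0}(3) is therefore at most $\max\{1,C_0^{\theta}\}$ times the $A$ appearing in Corollary~\ref{cor1}. Since $(\lambda t)+(\lambda t)^{1/2}s^{1/2}\le\lambda\,(t+t^{1/2}s^{1/2})$ for any $\lambda\ge1$, the estimate of Theorem~\ref{T0}(3) upgrades to the claimed inequality with a constant $C=C(p,\theta,\zeta)$ (the dependence on $\alpha$ disappears because $\alpha=\theta$). I do not expect a genuine obstacle here beyond the bookkeeping above: the only real point — and it is precisely what Corollary~\ref{cor1} adds over the general Theorem~\ref{T0} — is that feeding the Nikolskii-free entropy bound of Corollary~\ref{ent-est} for a $\theta$-convex body into part~(3) with $\alpha=\theta$ and $p\ge\theta$ collapses the two logarithmic powers to a single $\log m$ and yields the clean two-term $A$.
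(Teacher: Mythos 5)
Your proposal is correct and follows exactly the route the paper intends (the paper only states that Corollary \ref{cor1} follows by combining Corollary \ref{ent-est} with Theorem \ref{T0}(3), and you have supplied precisely that combination with $\alpha=\theta$, the right choice of $W_B(X)$, and the correct absorption of the extra $[\log m]^{1/\theta}$ factor into the single $\log m$ term). The bookkeeping — $\max\{1-p/\theta,0\}=0$, the identity $\max_j\sup_{f\in B}|f(X_j)|=\sup_{f\in B}\max_j|f(X_j)|$, and the monotonicity of $A\mapsto A+A^{1/2}s^{1/2}$ — is all sound.
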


In particular, we get the following result on discretization under the
$(\infty, 2)$ Nikolskii-type
inequality assumption.

\begin{corollary}\label{cor2}
Let $p\in[2,\infty)$ and let $\mu$ be a probability Borel measure
on a compact set $\Omega$.
There is a number $C:=C(p)$, dependent only on $p$,
such that, if $L$ is an
$N$-dimensional subspace of~$L^p(\mu)\cap C(\Omega)$
such that
$$
\|f\|_\infty\le MN^{1/2}\|f\|_2\quad \forall f\in L,
$$
then
$$
\mathbb{E}\sup\limits_{f\in B_p(L)}\Bigl|\frac{1}{m}\sum\limits_{j=1}^m|f(X_j)|^p - \|f\|_p^p\Bigr|
\le C\Bigl(\frac{\log m}{m} M^pN^{p/2} + \Bigl[\frac{\log m}{m} M^pN^{p/2}\Bigr]^{1/2}\Bigr),
$$
where $B_p(L):=\{f\in L\colon \|f\|_p\le 1\}$.
In particular, for every $\varepsilon\in(0, 1)$ and for every $\delta\in(0, 1)$
there is a big enough constant $c:=c(p, \varepsilon, \delta)$ such that
for every $m\ge cM^pN^{p/2}\log(4M^2N)$
one has
$$
(1-\varepsilon)\|f\|_p^p\le \sum_{j=1}^{m}|f(X_j)|^p\le (1+\varepsilon)\|f\|_p^p\quad \forall f\in L
$$
with probability greater than $1-\delta$ for any such subspace $L$.
\end{corollary}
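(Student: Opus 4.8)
The plan is to obtain the corollary as a direct specialization of Corollary~\ref{cor1}: one takes $B$ to be the $L^p$-unit ball of $L$ and $D$ to be its enclosing Euclidean ball, namely the $L^2$-unit ball, and then simplifies the two quantities entering the estimate of Corollary~\ref{cor1} by means of the Nikolskii-type inequality.

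\emph{Step 1: verifying the hypotheses of Corollary~\ref{cor1}.} Since $p\ge2$, the norm $\|\cdot\|_{L^p(\mu)}$ restricted to $L$ is $p$-convex with some constant $\zeta=\zeta(p)>0$: indeed, the $2$-uniform convexity of $L^p$ for $p\ge2$ upgrades to $p$-convexity on the unit ball at the cost of a factor $2^{2-p}$, exactly as is used elsewhere in the paper. Hence $B:=B_p(L)$ is a symmetric $\theta$-convex body with $\theta=p$, and the requirement $p\in[\theta,\infty)$ of Corollary~\ref{cor1} holds with equality. Moreover, since $\mu$ is a probability measure and $p\ge2$ we have $\|f\|_2\le\|f\|_p$ for all $f\in L$, so $B_p(L)\subset B_2(L)$, and $D:=B_2(L)=\{f\in L\colon\|f\|_2\le1\}$ is precisely the unit ball of the Euclidean norm $\|\cdot\|_2$ on $L$. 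Thus Corollary~\ref{cor1} applies with this $B$, this $D$, and $\theta=p$.

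\emph{Step 2: bounding the quantity $A$.} First, $\mathbb{E}|f(X_1)|^p=\|f\|_p^p\le1$ for every $f\in B_p(L)$, so the factor $(\sup_{f\in B}\mathbb{E}|f(X_1)|^p)^{1/2}$ is at most $1$. Next, the Nikolskii-type inequality gives, for any realization of $X=\{X_1,\dots,X_m\}$, the deterministic bounds $\sup_{f\in D}\max_{1\le j\le m}|f(X_j)|\le\sup_{f\in D}\|f\|_\infty\le MN^{1/2}$ and, since $B_p(L)\subset B_2(L)=D$, likewise $\sup_{h\in B}\max_{1\le j\le m}|h(X_j)|\le MN^{1/2}$. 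Substituting these into the definition of $A$ from Corollary~\ref{cor1}, the first summand is at most $\tfrac1m(M^2N)(M^{p-2}N^{(p-2)/2})=\tfrac1m M^pN^{p/2}$ and the second is at most $\tfrac{\log m}{m}M^pN^{p/2}$; using $\log m\ge1$ for $m\ge2$ we obtain $A\le C(p)\tfrac{\log m}{m}M^pN^{p/2}$. Feeding this back into Corollary~\ref{cor1} and absorbing numerical factors into $C(p)$ yields the asserted estimate for $\mathbb{E}\sup_{f\in B_p(L)}\bigl|\tfrac1m\sum_{j=1}^m|f(X_j)|^p-\|f\|_p^p\bigr|$.

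\emph{Step 3: the high-probability discretization.} By Markov's inequality, $P\bigl(V_p(B_p(L))>\varepsilon\bigr)\le\varepsilon^{-1}\mathbb{E}[V_p(B_p(L))]\le C(p)\varepsilon^{-1}\bigl(A+A^{1/2}\bigr)$ with $A=C(p)\tfrac{\log m}{m}M^pN^{p/2}$. Choosing $m$ so large that $A\le1$ and $A^{1/2}\le\varepsilon\delta/(2C(p))$ forces this probability below $\delta$; on the complementary event, the $p$-homogeneity of $f\mapsto\tfrac1m\sum_{j=1}^m|f(X_j)|^p-\|f\|_p^p$ promotes $V_p(B_p(L))\le\varepsilon$ to the two-sided inequality for every $f\in L$. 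The only remaining point is to check that $m\ge c(p,\varepsilon,\delta)M^pN^{p/2}\log(4M^2N)$ is sufficient: this is the routine solution of the self-referential inequality $\tfrac{\log m}{m}\lesssim (M^pN^{p/2})^{-1}$, using that $m\mapsto m/\log m$ is eventually increasing together with the elementary bound $\log\!\bigl(c(p,\varepsilon,\delta)M^pN^{p/2}\bigr)\le c'(p,\varepsilon,\delta)\log(4M^2N)$. I do not expect a genuine obstacle: once Corollary~\ref{cor1} is in hand the argument is pure bookkeeping with the Nikolskii inequality and Markov's inequality, the only mildly delicate step being the passage from the moment bound to the explicit threshold on $m$.
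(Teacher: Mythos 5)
Your proposal is correct and follows essentially the same route as the paper: apply Corollary \ref{cor1} with $B=B_p(L)$ and $D=B_2(L)$, use the Nikolskii-type inequality to bound both summands of $A$ by $\frac{\log m}{m}M^pN^{p/2}$, and conclude the high-probability statement via Markov's (Chebyshev's) inequality. The only cosmetic slip is in your justification of $p$-convexity: for $p\ge2$ the space $L^p$ is $p$-uniformly convex (e.g.\ by Clarkson's inequality), not $2$-uniformly convex, but the conclusion that $B_p(L)$ is $\theta$-convex with $\theta=p$ and constant $\zeta(p)$ is exactly what is needed and what the paper asserts.
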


\begin{proof}
Since for $p\ge 2$ the ball $B_p(L)$ is $p$-convex (with some constant $\zeta(p)$)
and $B_p(L)\subset B_2(L)$,
we can apply the previous corollary with $B=B_p(L)$
and with Euclidean ball $D=B_2(L)$.
We also note that
\begin{multline*}
\sup\limits_{h\in B}
\max\limits_{1\le j\le m}|h(X_j)|^{p}
\le
\sup\limits_{f\in D}\max\limits_{1\le j\le m}|f(X_j)|^2
\sup\limits_{h\in B}\max\limits_{1\le j\le m}|h(X_j)|^{p-2}
\le \sup\limits_{f\in D}\max\limits_{1\le j\le m}|f(X_j)|^p = M^pN^{p/2}
\end{multline*}
Thus, the first part of the assertion follows from Corollary \ref{cor1}.
The part concerning the discretization follows from Chebyshev's inequality,
since $M$ is always greater than or equal~to~$1$.
\end{proof}

\begin{remark}\label{rem-weights}
{\rm
We note that Corollary \ref{cor2} combined with
Lewis' change of density theorem (see \cite{Lew} or \cite{SZ})
implies
that for every $p\ge2$ and for every $\varepsilon\in(0, 1)$
there is a big enough constant $c:=c(p, \varepsilon)$
such that for every $N$-dimensional subspace $L$
of $L^p(\mu)\cap C(\Omega)$ and
for each $m\ge c N^{p/2}\log N$ there are points $X_1, \ldots, X_m$
and there are
positive weights
$\lambda_1, \ldots, \lambda_m$ such that
$$
(1-\varepsilon)\|f\|_p^p\le \sum_{j=1}^{m}\lambda_j|f(X_j)|^p\le (1+\varepsilon)\|f\|_p^p\quad \forall f\in L.
$$
The proof is the same as the proof of Theorem 2.3 in \cite{5-2}.
}
\end{remark}

We note that Corollary \ref{cor2} gives only a power dependence on the dimension~$N$
for the number of discretizing points.
Thus, we seek conditions on $L$ under which one can guarantee linear or almost linear dependence on dimension for the number of points sufficient for discretization.
For this purpose we combine Theorem \ref{T0}(1) with the estimate for the entropy numbers
from Corollary~\ref{ent-est}.

\begin{corollary}\label{cor3}
Let $\theta\ge 2$, $p\in[\theta,\infty)$,
and let $L$ be a subspace of~$L^p(\mu)\cap C(\Omega)$
for some Borel probability measure $\mu$
on a compact set $\Omega$.
Let $B\subset L$
be a
symmetric
$\theta$-convex (with a constant $\zeta>0$) body.
Then
there is a constant $C:=C(p, \theta, \zeta)$ such that
$$
\mathbb{E}\sup\limits_{f\in B}\Bigl|\frac{1}{m}\sum\limits_{j=1}^m|f(X_j)|^p - \|f\|_p^p\Bigr|
\le C\bigl(A +
A^{\frac{1}{\theta}}
(\sup\limits_{f\in B}\mathbb{E}|f(X_1)|^p)^{1-\frac{1}{\theta}}\bigr),
$$
where
$$
A= \frac{[\log m]^{\theta}}{m}
\mathbb{E}\bigl(\sup\limits_{f\in B}\max\limits_{1\le j\le m}|f(X_j)|^{p}\bigr).
$$
\end{corollary}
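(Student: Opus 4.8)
The plan is to obtain Corollary~\ref{cor3} as a direct specialization of Theorem~\ref{T0}(1), taking $\alpha=\theta$ and feeding in the entropy estimate for $\theta$-convex bodies from Corollary~\ref{ent-est}. First I would fix an arbitrary point configuration $X=\{X_1,\dots,X_m\}$ and apply Corollary~\ref{ent-est} to the symmetric $\theta$-convex body $B$, which gives
$$
e_k(B,\|\cdot\|_{\infty,X})\le C(\theta,\zeta)\Bigl[\max_{1\le j\le m}\sup_{f\in B}|f(X_j)|\Bigr]\,[\log m]^{1/\theta}\,2^{-k/\theta}.
$$
Since $\max_{1\le j\le m}\sup_{f\in B}|f(X_j)|=\sup_{f\in B}\max_{1\le j\le m}|f(X_j)|$, this is precisely the decay hypothesis of Theorem~\ref{T0}(1) with exponent $\alpha=\theta$ and with
$$
W_B(X)=C(\theta,\zeta)\,[\log m]^{1/\theta}\,\sup_{f\in B}\max_{1\le j\le m}|f(X_j)|.
$$
Note that $\theta\ge2$ and $p\ge\theta$ ensure $p\in(1,\infty)$, $p\ge\alpha$, and $\max\{\alpha,2\}=\theta$, so all hypotheses of Theorem~\ref{T0}(1) are met.

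Next I would simply substitute this choice of $W_B(X)$ into the conclusion of Theorem~\ref{T0}(1) and do the bookkeeping. The theorem yields the asserted bound with
$$
A'=\frac{[\log m]^{\theta(1-1/\theta)}}{m}\,\mathbb{E}\Bigl([W_B(X)]^\theta\sup_{f\in B}\max_{1\le j\le m}|f(X_j)|^{p-\theta}\Bigr)
$$
and with $(\sup_{f\in B}\mathbb{E}|f(X_1)|^p)^{1-1/\theta}$ and $A'^{1/\theta}$ in the right places, matching the statement. Writing $M_X:=\sup_{f\in B}\max_{1\le j\le m}|f(X_j)|=\sup_{f\in B,\,1\le j\le m}|f(X_j)|$, one has $[W_B(X)]^\theta=C(\theta,\zeta)^\theta[\log m]\,M_X^\theta$, so the $\log m$ prefactor becomes $[\log m]^{\theta-1+1}=[\log m]^{\theta}$; and since $p-\theta\ge0$ the map $x\mapsto x^{p-\theta}$ is monotone, hence $\sup_{f\in B}\max_j|f(X_j)|^{p-\theta}=M_X^{p-\theta}$ and $M_X^\theta\cdot M_X^{p-\theta}=M_X^p=\sup_{f\in B}\max_j|f(X_j)|^p$. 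Therefore $A'=C(\theta,\zeta)^\theta A$ with $A$ as in the statement, and absorbing $C(\theta,\zeta)^\theta$ into the final constant $C(p,\theta,\zeta)$ completes the proof.

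There is no genuine obstacle in this argument; it is essentially a matching of parameters. The only points requiring a moment of care are the interchange $\max_j\sup_{f\in B}=\sup_{f\in B}\max_j$ needed to read Corollary~\ref{ent-est} in the form demanded by Theorem~\ref{T0}(1), and the recombination $M_X^\theta\cdot M_X^{p-\theta}=M_X^p$ together with the corresponding absorption of the extra $[\log m]^{1/\theta}$ factors coming from $W_B(X)$ into the exponent $\theta$ of $\log m$. Both are immediate once one observes that every supremum appearing is just the joint supremum of $|f(X_j)|$ over $f\in B$ and $1\le j\le m$, raised to a nonnegative power.
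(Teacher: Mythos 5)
Your proposal is correct and is precisely the route the paper takes: the text introducing Corollary~\ref{cor3} states that it is obtained by combining Theorem~\ref{T0}(1) with the entropy bound of Corollary~\ref{ent-est}, and your choice $\alpha=\theta$, $W_B(X)=C(\theta,\zeta)[\log m]^{1/\theta}\sup_{f\in B}\max_j|f(X_j)|$ together with the exponent bookkeeping $\theta(1-1/\theta)+1=\theta$ and $M_X^{\theta}\cdot M_X^{p-\theta}=M_X^{p}$ is exactly the intended (and correct) verification.
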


Since the $L^p$-norm is $p$-convex with some constant $\zeta(p)$
for $p\ge 2$, the above
corollary implies the following result on sampling discretization under the
$(\infty, p)$ Nikolskii-type
inequality assumption.

\begin{corollary}\label{cor4}
Let $p\in[2,\infty)$ and let $\mu$ be a probability Borel measure
on a compact set $\Omega$.
There is a number $C:=C(p)$, dependent only on $p$,
such that,
if $L$ is an
$N$-dimensional subspace of~$L^p(\mu)\cap C(\Omega)$
such that
$$
\|f\|_\infty\le MN^{1/p}\|f\|_p\quad \forall f\in L,
$$
then
$$
\mathbb{E}\sup\limits_{f\in B_p(L)}\Bigl|\frac{1}{m}\sum\limits_{j=1}^m|f(X_j)|^p - \|f\|_p^p\Bigr|
\le C\Bigl(\frac{[\log m]^p}{m} M^pN + \Bigl[\frac{[\log m]^p}{m} M^pN\Bigr]^{1/p}\Bigr),
$$
where $B_p(L):=\{f\in L\colon \|f\|_p\le 1\}$.
In particular, for every $\varepsilon\in(0, 1)$ and for every $\delta\in(0, 1)$
there is a big enough constant $c:=c(p, \varepsilon, \delta)$ such that
for every $m\ge cM^pN[\log(4M^pN)]^p$
one has
$$
(1-\varepsilon)\|f\|_p^p\le \sum_{j=1}^{m}|f(X_j)|^p\le (1+\varepsilon)\|f\|_p^p\quad \forall f\in L
$$
with probability greater than $1-\delta$ for any such subspace $L$.
\end{corollary}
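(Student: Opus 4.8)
The plan is to deduce the corollary from Corollary~\ref{cor3} by taking the $\theta$-convex body there to be the $L^p(\mu)$-unit ball itself. For $p\ge2$ the norm $\|\cdot\|_{L^p(\mu)}$ is $p$-convex with a constant $\zeta(p)>0$ depending only on $p$ (a standard consequence of Clarkson's inequalities), so $B:=B_p(L)$ is a symmetric $p$-convex body and Corollary~\ref{cor3} applies with $\theta=p$, producing a constant $C(p)$ for which
\begin{equation*}
\mathbb{E}\sup\limits_{f\in B_p(L)}\Bigl|\frac{1}{m}\sum\limits_{j=1}^m|f(X_j)|^p-\|f\|_p^p\Bigr|
\le C(p)\Bigl(A+A^{1/p}\bigl(\sup\limits_{f\in B_p(L)}\mathbb{E}|f(X_1)|^p\bigr)^{1-1/p}\Bigr),
\end{equation*}
where $A=\frac{[\log m]^p}{m}\,\mathbb{E}\bigl(\sup\limits_{f\in B_p(L)}\max\limits_{1\le j\le m}|f(X_j)|^p\bigr)$.

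Next I would insert the two elementary bounds supplied by the hypotheses. Since $X_1$ has law $\mu$, for $f\in B_p(L)$ one has $\mathbb{E}|f(X_1)|^p=\|f\|_p^p\le1$, hence $\sup\limits_{f\in B_p(L)}\mathbb{E}|f(X_1)|^p\le1$. The $(\infty,p)$ Nikolskii-type inequality gives, for every $f\in B_p(L)$ and every $j$, the pointwise bound $|f(X_j)|\le\|f\|_\infty\le MN^{1/p}\|f\|_p\le MN^{1/p}$, hence $\sup\limits_{f\in B_p(L)}\max\limits_{1\le j\le m}|f(X_j)|^p\le M^pN$ identically and $A\le\frac{[\log m]^p}{m}M^pN$. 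Substituting these estimates into the displayed inequality, and using that $t\mapsto t+t^{1/p}$ is nondecreasing on $[0,\infty)$, yields exactly
\begin{equation*}
\mathbb{E}\sup\limits_{f\in B_p(L)}\Bigl|\frac{1}{m}\sum\limits_{j=1}^m|f(X_j)|^p-\|f\|_p^p\Bigr|
\le C(p)\Bigl(\frac{[\log m]^p}{m}M^pN+\Bigl[\frac{[\log m]^p}{m}M^pN\Bigr]^{1/p}\Bigr),
\end{equation*}
which is the first assertion of the corollary.

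For the discretization statement I would argue by Chebyshev's inequality, exactly as in the proof of Corollary~\ref{cor2}: since $P\bigl(V_p(B_p(L))>\varepsilon\bigr)\le\varepsilon^{-1}\mathbb{E}[V_p(B_p(L))]$, it is enough to make the right-hand side of the last display at most $\delta\varepsilon$. Write $K:=M^pN$ and recall that $M\ge1$, so $K\ge N$ and the condition $m\ge cK[\log(4K)]^p$ already forces $m\ge N$. The quantity $\frac{[\log m]^p}{m}K$ can be made arbitrarily small: the map $m\mapsto[\log m]^p/m$ is decreasing beyond a $p$-dependent threshold, so over the range $m\ge cK[\log(4K)]^p$ the product $\frac{[\log m]^p}{m}K$ is largest at the left endpoint, where $\log m\asymp\log(4K)$ and the product is $\asymp 1/c$. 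Hence the term $A^{1/p}$ dominates and $\mathbb{E}[V_p(B_p(L))]\le 2C(p)\bigl(\frac{[\log m]^p}{m}K\bigr)^{1/p}\le\delta\varepsilon$ once $c=c(p,\varepsilon,\delta)$ is large enough, so $P\bigl(V_p(B_p(L))\le\varepsilon\bigr)\ge1-\delta$; and $V_p(B_p(L))\le\varepsilon$ is, after rescaling $f$ to $\|f\|_p=1$, precisely the two-sided inequality claimed in the statement. Since all constants depend only on $p,M,N$, the conclusion is uniform over the admissible subspaces~$L$. I do not expect a genuine obstacle: everything substantive sits already in Corollary~\ref{cor3} (hence in Theorem~\ref{T0} and Corollary~\ref{ent-est}), and the only point needing a little care is this final bookkeeping that turns $m\ge cM^pN[\log(4M^pN)]^p$ into the smallness of $\frac{[\log m]^p}{m}M^pN$.
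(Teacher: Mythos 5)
Your proposal is correct and follows exactly the route the paper intends: apply Corollary \ref{cor3} with $\theta=p$ (using the $p$-convexity of the $L^p$-norm for $p\ge2$), bound $A$ via the Nikolskii inequality and $\sup_{f\in B_p(L)}\mathbb{E}|f(X_1)|^p\le1$, and finish with Chebyshev's inequality as in the proof of Corollary \ref{cor2}. The final bookkeeping showing that $m\ge cM^pN[\log(4M^pN)]^p$ makes $\frac{[\log m]^p}{m}M^pN$ small is carried out correctly (the supremum over admissible $m$ is attained at the left endpoint and tends to $0$ as $c\to\infty$), so there is nothing to add.
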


If the $\theta$-convex body $B$ is contained in another $q$-convex body $D$,
we can combine Theorem \ref{T0}(2) and entropy numbers bound from Corollary \ref{ent-est}
and get the following analog of Corollary~\ref{cor1}.

\begin{corollary}\label{cor5}
Let $\theta\ge 2$, $q\ge2$, $p\in[\max\{\theta, q\},\infty)$,
and let $L$ be a subspace of~$L^p(\mu)\cap C(\Omega)$
for some Borel probability measure $\mu$
on a compact set $\Omega$.
Let $D\subset L$
be a
symmetric
$q$-convex (with a constant $\eta>0$) body
and let $B\subset D$
be a
symmetric
$\theta$-convex (with a constant $\zeta>0$) body.
Then
there is a constant $C:=C(p, \theta, \zeta, q, \eta)$ such that
$$
\mathbb{E}\sup\limits_{f\in B}\Bigl|\frac{1}{m}\sum\limits_{j=1}^m|f(X_j)|^p - \|f\|_p^p\Bigr|
\le C\bigl(A + A^{\frac{1}{q}}
(\sup\limits_{f\in B}\mathbb{E}|f(X_1)|^p)^{1-\frac{1}{q}}\bigr),
$$
where
$$
A=
\frac{[\log m]^q}{m}\mathbb{E}\bigl(\sup\limits_{f\in D}\max\limits_{1\le j\le m}|f(X_j)|^q
\sup\limits_{h\in B}\max\limits_{1\le j\le m}|h(X_j)|^{p-q}\bigr)
+
\frac{\log m}{m}\mathbb{E}\bigl(\sup\limits_{h\in B}
\max\limits_{1\le j\le m}|h(X_j)|^{p}\bigr).
$$
\end{corollary}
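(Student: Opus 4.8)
The plan is to combine the general entropy estimate of Corollary~\ref{ent-est} with part~(2) of Theorem~\ref{T0}. Since $B$ is $\theta$-convex (with constant $\zeta$) and $D$ is $q$-convex (with constant $\eta$), Corollary~\ref{ent-est} gives, for every discrete set $X=\{X_1,\dots,X_m\}$,
$$e_k(B,\|\cdot\|_{\infty,X})\le W_B(X)\,2^{-k/\theta},\qquad e_k(D,\|\cdot\|_{\infty,X})\le W_D(X)\,2^{-k/q},$$
where $W_B(X)=C(\theta,\zeta)[\log m]^{1/\theta}\sup_{h\in B}\max_{1\le j\le m}|h(X_j)|$ and $W_D(X)=C(q,\eta)[\log m]^{1/q}\sup_{f\in D}\max_{1\le j\le m}|f(X_j)|$; here I use that $\max_{j}\sup_{f\in B}|f(X_j)|=\sup_{f\in B}\max_{j}|f(X_j)|$ since the two suprema commute. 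I would then apply Theorem~\ref{T0}(2) with $\alpha=\theta$ and $\beta=q$: the hypotheses hold because $\theta\ge 2$ and $p\ge\max\{\theta,q\}$ give $p\ge\max\{\alpha,2\}$, while $q\ge 2$ and $q\le p$ give $\beta=q\in[2,p]$; note also that the constant $C(p,\theta,\zeta,q,\eta,\alpha,\beta)$ furnished there depends only on $p,\theta,\zeta,q,\eta$ once $\alpha=\theta$, $\beta=q$ are fixed.

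It then remains to simplify the quantities $A_B$ and $A_D$ that Theorem~\ref{T0}(2) produces. Because $p\ge\theta$, the factor $[\log m]^{q\max\{1-p/\theta,0\}}$ in $A_B$ equals $1$; raising $W_B(X)$ to the power $\alpha=\theta$ yields exactly one extra factor $\log m$, and since $t\mapsto t^{p-\theta}$ is nondecreasing one has $\bigl(\sup_{h\in B}\max_j|h(X_j)|\bigr)^{\theta}\bigl(\sup_{f\in B}\max_j|f(X_j)|\bigr)^{p-\theta}=\bigl(\sup_{h\in B}\max_j|h(X_j)|\bigr)^{p}$. Hence $A_B\le C(\theta,\zeta)\,\frac{\log m}{m}\,\mathbb{E}\bigl(\sup_{h\in B}\max_{1\le j\le m}|h(X_j)|^{p}\bigr)$, which is a multiple of the second term of the quantity $A$ in the statement. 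Likewise, in $A_D$ the prefactor is $[\log m]^{q(1-1/q)}=[\log m]^{q-1}$, raising $W_D(X)$ to the power $\beta=q$ gives one more factor $\log m$ together with $\bigl(\sup_{f\in D}\max_j|f(X_j)|\bigr)^{q}$, so $A_D\le C(q,\eta)\,\frac{[\log m]^{q}}{m}\,\mathbb{E}\bigl(\sup_{f\in D}\max_{1\le j\le m}|f(X_j)|^{q}\sup_{h\in B}\max_{1\le j\le m}|h(X_j)|^{p-q}\bigr)$, a multiple of the first term of $A$. Consequently $A_B+A_D\le C(p,\theta,\zeta,q,\eta)\,A$.

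Finally I would feed these bounds back into the conclusion of Theorem~\ref{T0}(2). With $\beta=q$ that conclusion reads
$$\mathbb{E}\sup_{f\in B}\Bigl|\tfrac{1}{m}\sum_{j=1}^m|f(X_j)|^p-\|f\|_p^p\Bigr|\le C\bigl(A_B+A_D+(A_B+A_D)^{1/q}(\sup_{f\in B}\mathbb{E}|f(X_1)|^p)^{1-1/q}\bigr),$$
and inserting $A_B+A_D\le C'A$ (hence $(A_B+A_D)^{1/q}\le (C')^{1/q}A^{1/q}$) and absorbing all constants into a single $C=C(p,\theta,\zeta,q,\eta)$ gives the asserted inequality. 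The argument is entirely a matter of bookkeeping; the only place that needs a moment of care is the exponent arithmetic — tracking how the $[\log m]^{1/\theta}$ and $[\log m]^{1/q}$ hidden in $W_B$, $W_D$ interact with the powers $\alpha=\theta$, $\beta=q$ and with the $[\log m]$-prefactors already present in $A_B$, $A_D$ — together with the observation that each supremum over $B$ (resp.\ $D$) commutes with $\max_{1\le j\le m}$ and with the monotone power maps, which is what makes the products collapse to the two terms appearing in $A$.
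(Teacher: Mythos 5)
Your proposal is correct and follows exactly the route the paper intends: the paper gives no written proof of Corollary \ref{cor5}, stating only that it follows by combining Theorem \ref{T0}(2) with the entropy bound of Corollary \ref{ent-est}, and your choices $\alpha=\theta$, $\beta=q$ together with the exponent bookkeeping (the $[\log m]^{1/\theta}$ and $[\log m]^{1/q}$ factors in $W_B$, $W_D$ raised to the powers $\theta$ and $q$, combined with the prefactors $1$ and $[\log m]^{q-1}$) reproduce precisely the two terms of $A$. Your verification of the hypotheses of Theorem \ref{T0}(2) and the collapse of the products via monotonicity of the power maps are both sound.
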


We note that all the above results are not applicable in the case $p\in(1, 2)$
and that is why we need to use better bounds for the entropy numbers of the $L^p$
balls for $p\in(1, 2)$.

\begin{lemma}\label{ent-est-2}
Let $p\in(1,2)$ and let $\mu$ be a probability Borel measure
on a compact set $\Omega$.
There is a constant $C:=C(p)$ such that,
if $L$ is an $N$-dimensional subspace of~$L^p(\mu)\cap C(\Omega)$
such that
$$
\|f\|_\infty\le M\|f\|_2\quad \forall f\in L
$$
for some number $M\ge 2$,
then
for any fixed set of $m$ points $X=\{X_1, \ldots, X_m\}$
one has
$$
e_k(B_p(L), \|\cdot\|_{\infty,X})
\le C[\log m]^{1/2} [\log M]^{\frac{1}{p}-\frac{1}{2}}M^{2/p}
2^{-k/p},
$$
where $B_p(L)=\{f\in L\colon \|f\|_p\le1\}$.
\end{lemma}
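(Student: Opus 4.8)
The plan is to follow the scheme of \cite[Proposition~16.8.6]{Tal}: reduce the problem to a multiscale ``peeling'' of $B_p(L)$ by dyadic height levels, at each level controlling a (shrunken) copy of the $L^2(\mu)$-ball by the $2$-convexity estimate of Corollary~\ref{ent-est}, and upgrading the resulting $2^{-k/2}$-rate to the $2^{-k/p}$-rate by summing the contributions of the $\sim\log M$ levels.

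First I would record the a priori bounds. For $f\in B_p(L)$ one has $\|f\|_2^2=\int|f|^{2-p}|f|^p\,d\mu\le\|f\|_\infty^{2-p}\le(M\|f\|_2)^{2-p}$, hence $\|f\|_2\le M^{2/p-1}$ and then $\|f\|_\infty\le M^{2/p}$; in particular $B_p(L)\subset M^{2/p-1}B_2(L)$ and $\sup_{f\in B_p(L)}\|f\|_{\infty,X}\le M^{2/p}$. Feeding the latter into Corollary~\ref{ent-est} (the norm $\|\cdot\|_2$ is $2$-convex, and so is $\|\cdot\|_p$ for $p\in(1,2)$ with a constant $\zeta(p)$) gives the crude bound $e_k(B_p(L),\|\cdot\|_{\infty,X})\le C(p)M^{2/p}[\log m]^{1/2}2^{-k/2}$, which is what must be improved. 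Likewise, since $\sup_{\|g\|_2\le1}|g(X_j)|\le M$ for each $j$, Corollary~\ref{ent-est} applied to the Euclidean ball $B_2(L)$ yields $e_j(B_2(L),\|\cdot\|_{\infty,X})\le CM[\log m]^{1/2}2^{-j/2}$.

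The heart of the matter is the peeling, and this is where the main work lies. For $f\in B_p(L)$ and a dyadic level $v=2^{-\ell}M^{2/p}$, split $f$ restricted to $X$ by soft truncation, $f=f^{\le v}+f^{>v}$ with $f^{\le v}(X_j)=\operatorname{sign}(f(X_j))\cdot\min(|f(X_j)|,v)$. The ``bulk'' part satisfies $\|f^{\le v}\|_{L^2(\mu)}^2\le v^{2-p}\int\min(|f|,v)^p\,d\mu\le v^{2-p}$, so it should be covered with the $2^{-j/2}$-rate applied at radius $v^{1-p/2}$, i.e. within $\sim Mv^{1-p/2}[\log m]^{1/2}2^{-j/2}$; the ``overflow'' part has $\mu$-small support ($\mu(|f|>v)\le v^{-p}$) and obeys $\|f|_X-f^{>v}|_X\|_{\infty,X}\le v$, so it is absorbed into the target accuracy once $v$ is chosen below it, after a coarse cover handled by Corollary~\ref{ent-est} and the a priori sup bound. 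Summing $\log(\text{covering numbers})$ over the $O(\log M)$ dyadic levels between $\sim1$ and $M^{2/p}$, and choosing $\ell=\ell(k)$ to balance the bulk against the overflow, is designed to give $\log(\text{covering number of }B_p(L)\text{ at accuracy }\varepsilon)\le C(p)(\log m)^{p/2}(\log M)^{1-p/2}M^2\varepsilon^{-p}$, equivalently $e_k\le C[\log m]^{1/2}[\log M]^{1/p-1/2}M^{2/p}2^{-k/p}$. By \eqref{Eq1} (cf. Remark~\ref{rem1}) it suffices to prove this for $k\le\log m$.

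The delicate points, which I expect to be the real obstacle, are that soft truncation leaves the subspace $L$, so the bulk piece $f^{\le v}|_X$ is not literally a point of $v^{1-p/2}B_2(L)|_X$ and must be brought back into the linear structure with its $\|\cdot\|_{\infty,X}$-error kept under control — this is precisely where the Nikolskii inequality $\|f\|_\infty\le M\|f\|_2$ enters — and that the overflow $f^{>v}$ need not be supported on few of the sample points $X_j$, which forces an inner peeling on $X$ itself (i.e. an induction with $v$ in place of $M^{2/p}$). It is this double bookkeeping that produces exactly the exponent $1/p-1/2$ on $\log M$.
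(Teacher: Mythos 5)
Your a priori bounds are correct, and your instinct that the crude $2^{-k/2}$ rate coming from Corollary \ref{ent-est} must be upgraded to $2^{-k/p}$ by exploiting the gap between $\|\cdot\|_p$ and $\|\cdot\|_2$ is the right one. But the peeling scheme does not close, and you essentially say so yourself: the two ``delicate points'' you flag at the end are not side issues, they are the entire content of the lemma, and neither is resolved. Concretely: (i) the truncation $f^{\le v}$ leaves $L$, so the bound $\|f^{\le v}\|_{L^2(\mu)}\le v^{1-p/2}$ cannot be fed into Corollary \ref{ent-est}, which requires a $\theta$-convex body sitting inside the finite-dimensional space $L$ together with a bound on $\sup_f\max_j|f(X_j)|$ over that body; no mechanism is given for returning to $L$ with an $\|\cdot\|_{\infty,X}$-error small enough not to destroy the estimate. (ii) The overflow is controlled only through $\mu(|f|>v)\le v^{-p}$, but the lemma is asserted for an \emph{arbitrary} fixed set $X$ of $m$ points with no relation to $\mu$, so the $\mu$-measure of the superlevel set says nothing about how many $X_j$ lie in it; the ``inner peeling on $X$'' invoked to repair this is not described, and the stated approximation $\|f|_X-f^{>v}|_X\|_{\infty,X}\le v$ leaves you having to cover the set of overflow parts, which is no easier than the original problem. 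Finally, the balancing that is supposed to produce exactly the exponent $\frac1p-\frac12$ on $\log M$ is asserted rather than carried out.

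For comparison, the paper's Appendix C stays entirely inside the linear structure and avoids both obstacles. It uses the submultiplicativity $e_{k+1}(B_p(L),\|\cdot\|_{\infty,X})\le 2\,e_k(B_p(L),\|\cdot\|_2)\,e_k(B_2(L),\|\cdot\|_{\infty,X})$ from \cite[Lemma 16.8.9]{Tal}. The second factor is bounded by $cM2^{-k/2}[\log m]^{1/2}$ via the dual Sudakov inequality and the Gaussian maxima bound. The first factor is where $\log M$ enters: dual Sudakov gives $e_k(B_2(L),\|\cdot\|_r)\le cM\sqrt{r}\,2^{-k/2}$, the interpolation $\|f\|_r\le\|f\|_2^{1-\theta}\|f\|_R^{\theta}$ with $R=2r\log M$ upgrades this to $c\,r^{\frac12-\frac1r}[2^{-k}M^2\log M]^{\frac12-\frac1r}$, and the duality of entropy numbers (\cite[Lemma 16.8.10]{Tal}) converts it into a bound on $e_k(B_{r'}(L),\|\cdot\|_2)$; taking $r=p'$ and multiplying the two factors yields precisely the stated exponents. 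As written, your proposal is a plan with the decisive steps missing rather than a proof.
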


The proof of this lemma is actually very similar to the proof of
\cite[Proposition 16.8.6]{Tal} and we present it in Appendix C.

Since the unit ball in $L^p$-norm
is $2$-convex for $p\in(1, 2)$
we now can combine Lemma \ref{ent-est-2} and Theorem \ref{T0}(1)
and obtain the following result on sampling discretization.

\begin{corollary}\label{cor6}
Let $p\in(1, 2)$ and let $\mu$ be a probability Borel measure
on a compact set $\Omega$.
There is a constant $C:=C(p)$ such that,
if $L$ is an $N$-dimensional subspace of~$L^p(\mu)\cap C(\Omega)$
such that
$$
\|f\|_\infty\le MN^{1/2}\|f\|_2\quad \forall f\in L,
$$
then
\begin{multline*}
\mathbb{E}\sup\limits_{f\in B_p(L)}\Bigl|\frac{1}{m}\sum\limits_{j=1}^m|f(X_j)|^p - \|f\|_p^p\Bigr|
\\
\le C\Bigl(\frac{[\log m]^{1+\frac{p}{2}}[\log 4M^2N]^{1-\frac{p}{2}}}{m} M^2N +
\Bigl[\frac{[\log m]^{1+\frac{p}{2}}[\log 4M^2N]^{1-\frac{p}{2}}}{m} M^2N\Bigr]^{1/2}\Bigr),
\end{multline*}
where $B_p(L):=\{f\in L\colon \|f\|_p\le 1\}$.
In particular, for every $\varepsilon\in(0, 1)$ and for every $\delta\in(0, 1)$
there is a big enough constant $c:=c(p, \varepsilon, \delta)$ such that
for every $m\ge cM^2N[\log(4M^2N)]^2$
one has
$$
(1-\varepsilon)\|f\|_p^p\le \sum_{j=1}^{m}|f(X_j)|^p\le (1+\varepsilon)\|f\|_p^p\quad \forall f\in L
$$
with probability greater than $1-\delta$ for any such subspace $L$.
\end{corollary}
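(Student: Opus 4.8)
The plan is to deduce Corollary~\ref{cor6} by inserting the entropy estimate of Lemma~\ref{ent-est-2} into the conditional bound of Theorem~\ref{T0}(1); virtually all of the substance already resides in those two statements (and in Appendix~C), so the remaining task is only to match the parameters and track the logarithmic factors. For $p\in(1,2)$ the $L^p(\mu)$-norm is $2$-convex with some constant $\zeta(p)>0$, so $B_p(L)$ is a symmetric $2$-convex body and Theorem~\ref{T0}(1) is applicable with $\theta=2$ and decay exponent $\alpha=p$ (the requirement $p\ge\alpha$ holds with equality). To produce the constant $W_B(X)$ demanded there, I would first observe that $\|f\|_\infty\le MN^{1/2}\|f\|_2$ implies $\|f\|_\infty\le\widetilde M\|f\|_2$ with $\widetilde M:=\max\{2,MN^{1/2}\}$, so that Lemma~\ref{ent-est-2} yields, for every finite set $X$ of $m$ points and every $k$,
$$
e_k\bigl(B_p(L),\|\cdot\|_{\infty,X}\bigr)\le W\,2^{-k/p},\qquad
W:=C(p)\,[\log m]^{1/2}\,[\log(4M^2N)]^{\frac{1}{p}-\frac{1}{2}}\,M^{2/p}N^{1/p},
$$
where I used $\widetilde M^{2/p}\le 2^{2/p}M^{2/p}N^{1/p}$ and $\log\widetilde M\le\log(4M^2N)$, both valid since $M\ge1$ and $N\ge1$. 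Note that $W$ is independent of $X$.

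Next I would feed $W_B(X)\equiv W$ into Theorem~\ref{T0}(1). Since $\alpha=p<2$ one has $\max\{\alpha,2\}=2$, so the exponent of the logarithm in the quantity $A$ equals $\max\{\alpha,2\}(1-\tfrac{1}{\theta})=2\cdot\tfrac{1}{2}=1$, while $p-\alpha=0$, so the factor $\sup_{f\in B_p(L)}\max_{j}|f(X_j)|^{p-\alpha}$ is identically $1$. Consequently
$$
A=\frac{\log m}{m}\,W^{p}=C(p)\,\frac{[\log m]^{1+p/2}\,[\log(4M^2N)]^{1-p/2}}{m}\,M^2N ,
$$
where I used $p\bigl(\tfrac{1}{p}-\tfrac{1}{2}\bigr)=1-\tfrac{p}{2}$. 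Moreover $\mathbb{E}|f(X_1)|^p=\|f\|_p^p\le1$ for $f\in B_p(L)$, hence $\bigl(\sup_{f\in B_p(L)}\mathbb{E}|f(X_1)|^p\bigr)^{1/2}\le1$, and Theorem~\ref{T0}(1) gives $\mathbb{E}\,V_p(B_p(L))\le C(p)\bigl(A+A^{1/2}\bigr)$, which is exactly the first displayed inequality of the corollary.

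Finally, for the discretization claim I would fix $\varepsilon,\delta\in(0,1)$ and apply Markov's inequality to the nonnegative random variable $V_p(B_p(L))$. The function $m\mapsto[\log m]^{1+p/2}/m$ is decreasing once $m$ exceeds an absolute constant, so for $m\ge m_0:=cM^2N[\log(4M^2N)]^2$ with $c\ge1$ large, $A$ is no larger than its value at $m_0$; using $\log m_0\le C(p)(1+\log c)\log(4M^2N)$ one obtains $A\le C(p)(1+\log c)^{1+p/2}/c$, which tends to $0$ as $c\to\infty$. Hence, choosing $c=c(p,\varepsilon,\delta)$ large enough, we can force $C(p)(A+A^{1/2})\le\varepsilon\delta$, i.e. $\mathbb{E}\,V_p(B_p(L))\le\varepsilon\delta$, and then $P\bigl(V_p(B_p(L))>\varepsilon\bigr)\le\varepsilon^{-1}\mathbb{E}\,V_p(B_p(L))\le\delta$. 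On the complementary event one has $(1-\varepsilon)\|f\|_p^p\le\frac{1}{m}\sum_{j=1}^m|f(X_j)|^p\le(1+\varepsilon)\|f\|_p^p$ for all $f\in L$, which is the asserted discretization inequality.

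No genuine obstacle appears here. The only points that deserve attention are the passage from the Nikolskii constant $MN^{1/2}$ to a constant $\ge2$ as required by Lemma~\ref{ent-est-2}, and the verification that the sampling budget $m\ge cM^2N[\log(4M^2N)]^2$ really absorbs the loose $[\log m]$ factors via the monotonicity of $[\log m]^{1+p/2}/m$; I expect the latter to be the only step where one must be slightly careful.
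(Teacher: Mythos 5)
Your proposal is correct and follows exactly the paper's route: the paper deduces Corollary \ref{cor6} precisely by combining the $2$-convexity of $B_p(L)$ for $p\in(1,2)$ with the entropy bound of Lemma \ref{ent-est-2} (applied with Nikolskii constant $MN^{1/2}$) inside Theorem \ref{T0}(1), and your parameter bookkeeping ($\theta=2$, $\alpha=p$, so the logarithm exponent is $1$ and $W^p$ contributes $[\log m]^{p/2}[\log(4M^2N)]^{1-p/2}M^2N$) reproduces the stated bound. The concluding Chebyshev/Markov step and the absorption of the $[\log m]^{1+p/2}$ factor for $m\ge cM^2N[\log(4M^2N)]^2$ are likewise the intended argument.
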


\begin{remark}\label{rem-weights-2}
{\rm
Similarly to Remark \ref{rem-weights},
the combination of Corollary \ref{cor6} and
Lewis' change of density theorem (see \cite{Lew} or \cite{SZ})
implies
that for every $p\in(1, 2)$ and for every $\varepsilon\in(0, 1)$
there is a big enough constant $c:=c(p, \varepsilon)$
such that for every $N$-dimensional subspace $L$
of $L^p(\mu)\cap C(\Omega)$ and
for each $m\ge c N[\log N]^2$ there are points $X_1, \ldots, X_m$
and there are
positive weights
$\lambda_1, \ldots, \lambda_m$ such that
$$
(1-\varepsilon)\|f\|_p^p\le \sum_{j=1}^{m}\lambda_j|f(X_j)|^p\le (1+\varepsilon)\|f\|_p^p\quad \forall f\in L.
$$
The proof again is the same as the proof of Theorem 2.3 in \cite{5-2}.
}
\end{remark}

\section{Appendix A: the proof of Theorem \ref{T-VH}}

We again stress that the proof of Theorem \ref{T-VH} heavily follows the proof of
\cite[Theorem 7.3]{VH} and is presented here
only for readers' convenience.

We first recall the claim of the theorem.

{\bf Theorem \ref{T-VH}.}
{\it
Let $q\ge2$, $p>1$, $\alpha>0$.
Let $L$ be a linear space and let $D\subset L$
be a symmetric $q$-convex (with constant $\eta>0$)
body.
Let $\varrho$ be a quasi-metric on $L$
such that
$$
\varrho(f,g)\le R\bigl(\varrho(f,h)+\varrho(h,g)\bigr);\quad \varrho\Bigl(f, \frac{f+g}{2}\Bigr)\le \varkappa \varrho(f,g)
$$
for all $f,g,h\in L$, for some constants $R,\varkappa>0$.
Assume that there is a metric $d$ on $L$
and for each $h\in L$ there is a norm $\|\cdot\|_h$ on $L$ such that
$$
c_1 d(f,g)^p\le \varrho(f,g)\le c_2\bigl(\|f-g\|_h+d(f,g)(d(f,h)^{p-1}+d(h,g)^{p-1})\bigr)
$$
for some numbers $c_1, c_2>0$.
Then there is a number $C:=C(q,p,\alpha, R,\varkappa, c_1, c_2)$
such that for any $B\subset D$ one has
$$
\gamma_{\alpha,1}(B, \varrho)\le
C\Bigl(
\eta^{-1/q}\Bigl[\sup\limits_{h\in B}
\sum\limits_{k=0}^\infty\bigl(2^{k/\alpha}e_k(D, \|\cdot\|_h)\bigr)^{\frac{q}{q-1}}\Bigr]^{\frac{q-1}{q}}
+
\bigl[\gamma_{\alpha p,p}(B, d)\bigr]^p
\Bigr).
$$
}

Since $D\subset L$
is a symmetric $q$-convex with constant $\eta>0$ body,
then it is a unit ball with respect to some
$q$-convex with constant $\eta>0$ norm $\|\cdot\|$, i.e.
$D=\{f\in L\colon \|f\|\le 1\}$ and
$$
\Bigl\|\frac{f+g}{2}\Bigr\|\le \max(\|f\|,\|g\|)-\eta\|f-g\|^q
$$
for any $f, g$ with $\|f\|\le1, \|g\|\le1$.

We recall the main tools from \cite{VH} concerning chaining through interpolation.
Let
$$
K(t, f):=\inf\limits_{g\in L}(\|g\|+t\varrho(f,g))
$$
and let $\pi_t(f)$ be any minimizer.

The following contraction principle is formulated and proved in Theorem 3.1 in \cite{VH}.

\begin{theorem}\label{Contraction}
Assume there are functions $s_k(f)\ge0$ and a number $a>0$ such that
$$
e_k(A, \varrho)\le a\, {\rm diam}(A,\varrho) + \sup\limits_{f\in A}s_k(f)
$$
for every $k\in \mathbb{N}\cup\{0\}$ and every set $A\subset B$. Then
$$
\gamma_{\alpha,r}(B,\varrho)\le C(\alpha)\Bigl(a\, \gamma_{\alpha,r}(B,\varrho) +
\Bigl[\sup_{f\in B}\sum\limits_{k\ge0}\bigl(2^{k/\alpha}s_k(f)\bigr)^r\Bigr]^{1/r}\Bigr).
$$
\end{theorem}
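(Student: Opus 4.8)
The plan is to construct, starting from a nearly optimal admissible sequence for $\gamma_{\alpha,r}(B,\varrho)$, a new admissible sequence of $B$ whose $\gamma_{\alpha,r}$-sum is bounded by the right-hand side; by the very definition of $\gamma_{\alpha,r}$ this is exactly the claim. Write $\Gamma:=\gamma_{\alpha,r}(B,\varrho)$ and $S:=\bigl[\sup_{f\in B}\sum_{k\ge0}(2^{k/\alpha}s_k(f))^r\bigr]^{1/r}$; we may assume $0<\Gamma,S<\infty$, and we may also assume $a$ is below any fixed threshold depending on $\alpha$, $r$, and the quasi-metric constant $R$, since otherwise $C(\alpha)\,a\,\Gamma\ge\Gamma$ and there is nothing to prove. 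Fix an admissible sequence $(\mathcal A_n)_{n\ge0}$ of $B$ with $\sup_{f\in B}\sum_{n\ge0}(2^{n/\alpha}\mathrm{diam}(A_n(f),\varrho))^r\le 2\Gamma^r$.

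The construction refines $(\mathcal A_n)$ level by level. Fix $n$ and $A\in\mathcal A_n$ and set the ``floor'' $\Delta_n(A):=\max\{a\,\mathrm{diam}(A,\varrho),\,2^{-2n/\alpha}S\}$. First split $A$ into the dyadic layers of $s_n$ measured against $\Delta_n(A)$, namely $A^{(0)}:=\{g\in A:s_n(g)\le\Delta_n(A)\}$ and $A^{(i)}:=\{g\in A:2^{i-1}\Delta_n(A)<s_n(g)\le2^i\Delta_n(A)\}$ for $i\ge1$; since $2^{n/\alpha}s_n(g)\le S$ for all $g$ and $\Delta_n(A)\ge2^{-2n/\alpha}S$, the layer $A^{(i)}$ can be nonempty only for $i\le c(\alpha)n$. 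Now apply the hypothesis $e_n(A^{(i)},\varrho)\le a\,\mathrm{diam}(A^{(i)},\varrho)+\sup_{g\in A^{(i)}}s_n(g)$ to each layer, cover each layer by $2^{2^n}$ $\varrho$-balls, pass from covers to partitions in the usual way, and take the common refinement over the first $n$ levels. Because $2^{2^j}$ grows doubly exponentially it swallows both the extra factor $2^{2^j}$ from the covering and the factor $\prod_{j\le n}O_\alpha(j)$ from the layers, so the resulting increasing sequence of partitions has cardinality at most $2^{2^{n+\kappa}}$ at level $n$ for some $\kappa=\kappa(\alpha)$. Re-indexing by the shift $\kappa$ — which inflates the $\gamma$-functional by at most the factor $2^{\kappa/\alpha}$, and whose finitely many base levels are harmless because the $k=0$ instance of the hypothesis together with the elementary bound $e_0(B,\varrho)\ge\mathrm{diam}(B,\varrho)/(2R)$ forces $\mathrm{diam}(B,\varrho)\le C(R)\sup_{g}s_0(g)\le C(R)S$ once $a$ is small — produces a genuine admissible sequence $(\mathcal C_n)$.

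The point of this construction is the diameter bound it records. For each $f\in B$ the piece $C_n(f)$ lies in one of the $\varrho$-balls built over the layer of $A_n(f)$ containing $f$, and on that layer $\sup s_n\le2\max\{a\,\mathrm{diam}(A_n(f),\varrho),\,2^{-2n/\alpha}S,\,s_n(f)\}$; hence, by the quasi-metric ball estimate $\mathrm{diam}\le 2R\,e_n$,
$$
\mathrm{diam}(C_n(f),\varrho)\le C(R)\bigl(a\,\mathrm{diam}(A_n(f),\varrho)+s_n(f)+2^{-2n/\alpha}S\bigr)
$$
(the bounded index shift being absorbed into the constant). Raising to the power $r$, weighting by $2^{nr/\alpha}$, summing over $n$, and taking $\sup_f$, the three terms contribute respectively $a^r\cdot2\Gamma^r$, $S^r$, and $S^r\sum_{n}2^{-nr/\alpha}$, so that $\Gamma^r\le C(\alpha,r,R)(a^r\Gamma^r+S^r)$; extracting $r$-th roots and absorbing the dependence on $r$ and $R$ into the constant gives $\gamma_{\alpha,r}(B,\varrho)\le C(\alpha)\bigl(a\,\gamma_{\alpha,r}(B,\varrho)+S\bigr)$.

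I expect the main obstacle to be the bookkeeping of the second paragraph: the layering of each $A\in\mathcal A_n$ by the size of $s_n$ must be organized (this is the role of the floor $\Delta_n(A)$) so that simultaneously only $O_\alpha(n)$ layers ever occur — so that admissibility survives the bounded index shift — and the diameter produced for the piece containing $f$ involves the pointwise value $s_n(f)$ rather than $\sup_{g\in A_n(f)}s_n(g)$; replacing the former by the latter would only yield the weaker inequality with $\bigl[\sum_k(2^{k/\alpha}\sup_{g\in B}s_k(g))^r\bigr]^{1/r}$ in place of $S$. The remaining ingredients — the choice of the near-optimal $(\mathcal A_n)$, turning covers into partitions and taking common refinements, the quasi-metric ball estimate, and the final summation — are routine.
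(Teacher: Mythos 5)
The paper does not actually prove this statement --- it is quoted as Theorem 3.1 of van Handel \cite{VH} --- and your argument is a correct reconstruction of van Handel's proof: the dyadic layering of each cell of a near-optimal admissible sequence by the size of $s_n$ above a geometrically decaying floor (so that only $O_\alpha(n)$ layers occur and the covering radius sees the pointwise value $s_n(f)$), followed by the common refinement and a bounded index shift, are exactly the ingredients there. The only cosmetic caveat is that your constant depends on $r$ and on the quasi-metric constant $R$ as well as on $\alpha$, which is consistent with how the theorem is used in the paper, where all such dependences are absorbed into $C(q,p,\alpha,R,\varkappa,c_1,c_2)$ in Theorem \ref{T-VH}.
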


The following theorem is Lemma 4.5 in \cite{VH}.

\begin{theorem}\label{Interpolation}
For every $a>0$ one has
$$
\sup_{f\in B}\sum\limits_{k\ge0}2^{k/\alpha}\varrho(f, \pi_{a2^{k/\alpha}}(f))\le
C(\alpha)a^{-1}\sup\limits_{f\in B}\|f\|.
$$
\end{theorem}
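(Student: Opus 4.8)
The plan is to exploit the elementary structure of the $K$-functional $t\mapsto K(t,f)$. For each fixed $f$, this map is nondecreasing and concave on $[0,\infty)$ (being an infimum of affine functions of $t$), it satisfies $K(t,f)\le\|f\|$ for every $t\ge0$ (take $g=f$ in the infimum, since $\varrho(f,f)=0$), and $K(0,f)=0$ (take $g=0$). Writing $t_k:=a2^{k/\alpha}$, the key observation is that $\varrho(f,\pi_{t_k}(f))$ is the slope of an affine function lying above $K(\cdot,f)$ and touching it at $t_k$: indeed, plugging $g=\pi_{t_k}(f)$ into the infimum gives $K(t,f)\le\|\pi_{t_k}(f)\|+t\,\varrho(f,\pi_{t_k}(f))$ for every $t>0$, with equality at $t=t_k$ by the minimality of $\pi_{t_k}(f)$.

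First I would compare the two neighbouring nodes $t_{k-1}<t_k$. Subtracting the bound $K(t_{k-1},f)\le\|\pi_{t_k}(f)\|+t_{k-1}\varrho(f,\pi_{t_k}(f))$ from the identity $K(t_k,f)=\|\pi_{t_k}(f)\|+t_k\varrho(f,\pi_{t_k}(f))$ yields $(t_k-t_{k-1})\varrho(f,\pi_{t_k}(f))\le K(t_k,f)-K(t_{k-1},f)$. Since $t_k-t_{k-1}=(1-2^{-1/\alpha})t_k$, this rearranges to
$$
t_k\,\varrho(f,\pi_{t_k}(f))\le\frac{K(t_k,f)-K(t_{k-1},f)}{1-2^{-1/\alpha}},\qquad k\ge1.
$$
Summing over $k\ge1$, the right-hand side telescopes, and using $K(t_k,f)\le\|f\|$ together with $K(t_0,f)\ge0$ gives $\sum_{k\ge1}t_k\varrho(f,\pi_{t_k}(f))\le(1-2^{-1/\alpha})^{-1}\|f\|$. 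The $k=0$ term is bounded directly: $t_0\varrho(f,\pi_{t_0}(f))=K(t_0,f)-\|\pi_{t_0}(f)\|\le K(t_0,f)\le\|f\|$.

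Adding these two estimates and dividing by $a$, and noting that $a^{-1}t_k=2^{k/\alpha}$, I would obtain $\sum_{k\ge0}2^{k/\alpha}\varrho(f,\pi_{t_k}(f))\le a^{-1}\|f\|\bigl(1+(1-2^{-1/\alpha})^{-1}\bigr)$. Taking the supremum over $f\in B$ and setting $C(\alpha):=1+(1-2^{-1/\alpha})^{-1}$ completes the argument.

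I do not expect a genuine obstacle here: the whole proof is a telescoping-sum estimate driven by the concavity of $K(\cdot,f)$, and the only inequality needing a word of explanation is the supporting-line bound $K(t,f)\le\|\pi_{t_k}(f)\|+t\,\varrho(f,\pi_{t_k}(f))$, which is immediate from the definitions and uses neither the $q$-convexity of $D$ nor any property of $\varrho$ beyond $\varrho(f,f)=0$. The two minor points to watch are that the telescoped series converges (it does, because $K(\cdot,f)$ is nondecreasing and bounded above by $\|f\|$) and that the final constant is independent of $f$, which holds since it depends only on $\alpha$.
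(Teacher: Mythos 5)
Your proof is correct. The paper itself does not prove this statement -- it is quoted as Lemma 4.5 of van Handel's paper \cite{VH} -- but your telescoping argument (the supporting-line identity $K(t_k,f)=\|\pi_{t_k}(f)\|+t_k\varrho(f,\pi_{t_k}(f))$ combined with $K(t_{k-1},f)\le\|\pi_{t_k}(f)\|+t_{k-1}\varrho(f,\pi_{t_k}(f))$, then summing the resulting increments of the bounded nondecreasing function $K(\cdot,f)$) is exactly the standard proof of this interpolation lemma, and you are right that it uses nothing beyond $\varrho(f,f)=0$, nonnegativity, and the existence of exact minimizers $\pi_t(f)$ assumed in the setup.
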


Throughout this section the expression $V\lesssim W$ means
that there exists a positive number $C:=C(q,p,\alpha, R,\varkappa, c_1, c_2)$
such that $V\le C W$.

\begin{lemma}\label{Lem-A}
For any $t>0$ and for any $A\subset B\subset D$ one has
$$
{\rm diam}(A_t, \|\cdot\|)\le
c(\varkappa, R, q)\Bigl(\frac{t}{\eta}\Bigr)^{1/q}\Bigl({\rm diam}(A, \varrho) +
\sup\limits_{h\in A}\varrho(h, \pi_t(h))\Bigr)^{1/q}
$$
where
$A_t:=\{\pi_t(h)\colon h\in A\}$.
\end{lemma}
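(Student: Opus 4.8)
The plan is to fix arbitrary $h_1,h_2\in A$, set $g_i:=\pi_t(h_i)\in A_t$, and control $\|g_1-g_2\|$ by combining the $q$-convexity of the norm $\|\cdot\|$ whose unit ball is $D$ with the minimality that defines the interpolants. The first observation is that $g_1,g_2$ actually lie in $D$: since $\varrho(h,h)=0$, one has $\|\pi_t(h)\|\le K(t,h)\le\|h\|+t\varrho(h,h)=\|h\|\le1$ for every $h\in A\subset D$. Hence the $q$-convexity inequality
$$\Bigl\|\tfrac{g_1+g_2}{2}\Bigr\|\le\max(\|g_1\|,\|g_2\|)-\eta\|g_1-g_2\|^q$$
applies to the pair $(g_1,g_2)$, and since $L$ is linear the midpoint $\tfrac{g_1+g_2}{2}$ is an admissible competitor in the infimum defining $K(t,h_i)$.

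Next I would abbreviate $S:={\rm diam}(A,\varrho)+\sup_{h\in A}\varrho(h,\pi_t(h))$, the quantity appearing on the right-hand side, and record three routine consequences of the relaxed triangle inequality (constant $R$), the symmetry of $\varrho$, and the midpoint contraction $\varrho(f,\tfrac{f+g}{2})\le\varkappa\varrho(f,g)$: namely $\varrho(g_1,g_2)\lesssim S$ (via the chain $g_1\to h_1\to h_2\to g_2$), $\varrho(h_1,g_2)\lesssim S$ (via $h_1\to h_2\to g_2$), and $\varrho\bigl(h_2,\tfrac{g_1+g_2}{2}\bigr)\lesssim S$ (via $h_2\to g_2\to\tfrac{g_1+g_2}{2}$, then the previous bound on $\varrho(g_1,g_2)$), all with implied constants depending only on $R$ and $\varkappa$.

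The main step is then the following. Assuming, without loss of generality, that $\|g_1\|\ge\|g_2\|$, the minimality of $g_2$ and $q$-convexity give
$$\|g_2\|+t\varrho(h_2,g_2)\le\Bigl\|\tfrac{g_1+g_2}{2}\Bigr\|+t\varrho\Bigl(h_2,\tfrac{g_1+g_2}{2}\Bigr)\le\|g_1\|-\eta\|g_1-g_2\|^q+t\varrho\Bigl(h_2,\tfrac{g_1+g_2}{2}\Bigr),$$
so that $\eta\|g_1-g_2\|^q\le\bigl(\|g_1\|-\|g_2\|\bigr)+t\varrho\bigl(h_2,\tfrac{g_1+g_2}{2}\bigr)$. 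To absorb the gap $\|g_1\|-\|g_2\|$ I would use the minimality of $g_1$ tested against the competitor $g_2$, which yields $\|g_1\|-\|g_2\|\le t\varrho(h_1,g_2)-t\varrho(h_1,g_1)\le t\varrho(h_1,g_2)$. Combining these and inserting the auxiliary bounds from the previous paragraph gives $\eta\|g_1-g_2\|^q\lesssim tS$, whence $\|g_1-g_2\|\lesssim(t/\eta)^{1/q}S^{1/q}$; taking the supremum over $h_1,h_2\in A$ produces the asserted inequality, with $c(\varkappa,R,q)$ the resulting constant ($q$ enters only through the exponent $1/q$ applied to an $(\varkappa,R)$-dependent constant).

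I expect the only genuine subtlety to be the treatment of the term $\max(\|g_1\|,\|g_2\|)$ in the $q$-convexity inequality: rather than trying to symmetrize, one should use one of the two minimality relations to extract the convexity gain $-\eta\|g_1-g_2\|^q$ and the other to dominate the residual gap $\|g_1\|-\|g_2\|$ by $t\varrho(h_1,g_2)\lesssim tS$. Everything else is bookkeeping of the constants generated by the relaxed triangle inequality and the midpoint contraction.
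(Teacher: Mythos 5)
Your proof is correct and follows essentially the same route as the paper's: both test the $K$-functional minimality against the midpoint $\tfrac{1}{2}(\pi_t(h_1)+\pi_t(h_2))$, invoke the $q$-convexity of the norm defining $D$ (after observing $\|\pi_t(h)\|\le\|h\|\le 1$), and control the resulting $\varrho$-terms by $S$ via the relaxed triangle inequality and the midpoint contraction. The only cosmetic difference is that the paper bounds $\max(\|\pi_t(h_1)\|,\|\pi_t(h_2)\|)$ symmetrically, while you dispose of the maximum by assuming $\|g_1\|\ge\|g_2\|$ and using the second minimality relation to absorb the gap $\|g_1\|-\|g_2\|$; both yield the same inequality $\eta\|g_1-g_2\|^q\lesssim tS$.
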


\begin{proof}
We note that
$$
\|\pi_t(f)\|\le K(t, f)\le \|u\|+tR(\varrho(f,\pi_t(f))+\varrho(\pi_t(f),u))
$$
for any $u\in L$. Thus, for fixed $f,g\in A$ we take $u=\frac{1}{2}\bigl(\pi_t(f)+\pi_t(g)\bigr)$
and obtain
$$
\max(\|\pi_t(f)\|,\|\pi_t(g)\|)\le \Bigl\|\frac{\pi_t(f)+\pi_t(g)}{2}\Bigr\| +
tR\sup\limits_{h\in A}\varrho(h, \pi_t(h)) +
tR\varkappa \varrho(\pi_t(f), \pi_t(g)).
$$
By the definition of $q$-convexity, we get
\begin{multline*}
\eta\|\pi_t(f) - \pi_t(g)\|^q\le tR\sup\limits_{h\in A}\varrho(h, \pi_t(h)) + tR\varkappa \varrho(\pi_t(f), \pi_t(g))
\\
\le
tR^3\varkappa \varrho(f, g) + t(R+\varkappa R^2+\varkappa R^3)\sup\limits_{h\in A}\varrho(h, \pi_t(h)).
\end{multline*}
This bound implies
the statement of the lemma.
\end{proof}

\begin{remark}{\rm
The lemma actually means that
the set $A_t$ is contained in some
ball of radius
$c(\varkappa, R, q)\bigl(\frac{t}{\eta}\bigr)^{1/q}\Bigl({\rm diam}(A, \varrho) +
\sup\limits_{h\in A}\varrho(h, \pi_t(h))\Bigr)^{1/q}$
with respect to the norm $\|\cdot\|$.
}
\end{remark}

\begin{lemma}\label{Lem-B}
Let $(\mathcal{F}_n)$ be an admissible sequence of $B$ and $a,b>0$.
Then
$$
e_k(A,\varrho)\lesssim b\, {\rm diam}(A,\varrho) + \sup\limits_{f\in A}s_k(f)
$$
for every $k\ge1$ and every $A\subset B$
where
\begin{multline*}
s_k(f)=(b+1)\varrho(f, \pi_{a2^{k/\alpha}}(f))
+
\Bigl(\frac{a2^{k/\alpha}}{b\eta}\Bigr)^{1/(q-1)}\bigl(e_{k-1}(D, \|\cdot\|_f)\bigr)^{q/(q-1)}
+
\bigl({\rm diam}(F_{k-1}(f), d)\bigr)^p.
\end{multline*}
\end{lemma}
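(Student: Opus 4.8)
The plan is to build an explicit covering of $A$ by at most $n_k=2^{2^k}$ balls of the quasi-metric $\varrho$ whose common radius is $\lesssim b\,{\rm diam}(A,\varrho)+\sup_{f\in A}s_k(f)$, following the interpolation scheme of \cite{VH}. Put $t:=a2^{k/\alpha}$ and, for $S\subset A$, write $S_t:=\{\pi_t(h)\colon h\in S\}$; since $\|\pi_t(h)\|\le K(t,h)\le\|h\|\le 1$ for $h\in B\subset D$, every such $S_t$ lies in $D$. First I would partition $A$ along the level-$(k-1)$ partition of the admissible sequence, $A=\bigcup_{F\in\mathcal{F}_{k-1}}(A\cap F)$, which has at most $n_{k-1}=2^{2^{k-1}}$ nonempty pieces, and I note that the piece containing $f$ is exactly $F_{k-1}(f)$, so its $d$-diameter is the quantity ${\rm diam}(F_{k-1}(f),d)$ appearing in $s_k(f)$.

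Fix one such piece $P$ and a reference point $h_P\in P$. By Lemma \ref{Lem-A} applied to $P$ one has ${\rm diam}(P_t,\|\cdot\|)\le\rho_0$, where $\rho_0:=c(\varkappa,R,q)(t/\eta)^{1/q}\bigl({\rm diam}(A,\varrho)+\sup_{h\in A}\varrho(h,\pi_t(h))\bigr)^{1/q}$; hence $P_t$ sits in a translate of $\rho_0 D$ and can be covered by $n_{k-1}$ balls of $\|\cdot\|_{h_P}$-radius $2\rho_0\,e_{k-1}(D,\|\cdot\|_{h_P})$ whose centers may be taken of the form $c=\pi_t(g)$ with $g\in P$. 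Running this over all pieces yields at most $n_{k-1}^2\le n_k$ centers in total. For $f\in A$ let $P:=A\cap F_{k-1}(f)$ and let $c=\pi_t(g)$, $g\in P$, be a center with $\|\pi_t(f)-c\|_{h_P}\le 2\rho_0\,e_{k-1}(D,\|\cdot\|_{h_P})$; applying the relaxed triangle inequality for $\varrho$ and then the structural upper bound on $\varrho$ with the free reference chosen to be $h=h_P$,
$$\varrho(f,c)\le R\,\varrho(f,\pi_t(f))+Rc_2\bigl(\|\pi_t(f)-c\|_{h_P}+d(\pi_t(f),c)(d(\pi_t(f),h_P)^{p-1}+d(h_P,c)^{p-1})\bigr).$$

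It then remains to bound each summand by $b\,{\rm diam}(A,\varrho)+C\sup_{f'\in A}s_k(f')$ uniformly in $f$. For the norm term I would write $\rho_0\,e_{k-1}(D,\|\cdot\|_{h_P})=c(t/\eta)^{1/q}M^{1/q}\cdot e_{k-1}(D,\|\cdot\|_{h_P})$ with $M:={\rm diam}(A,\varrho)+\sup_{h\in A}\varrho(h,\pi_t(h))$ and apply a weighted Young inequality with exponents $q$ and $q/(q-1)$ calibrated so that the $M^{1/q}$-factor contributes $b\,{\rm diam}(A,\varrho)+b\sup_{h\in A}\varrho(h,\pi_t(h))$ and the leftover is $C(q)(a2^{k/\alpha}/(b\eta))^{1/(q-1)}(e_{k-1}(D,\|\cdot\|_{h_P}))^{q/(q-1)}$ — the exponents of $t/\eta$ collapse to $1/(q-1)$, exactly the middle term of $s_k$. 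For the $d$-terms I would use $d(\pi_t(f),f)\le c_1^{-1/p}\varrho(f,\pi_t(f))^{1/p}$, $d(f,h_P)\le{\rm diam}(F_{k-1}(f),d)$ and the analogous bounds for the center $c=\pi_t(g)$, $g\in P$, so that $d(\pi_t(f),c)(d(\pi_t(f),h_P)^{p-1}+d(h_P,c)^{p-1})\lesssim {\rm diam}(F_{k-1}(f),d)^p+\varrho(f,\pi_t(f))+\sup_{h\in A}\varrho(h,\pi_t(h))$. Finally, since $h_P\in A$ we have $e_{k-1}(D,\|\cdot\|_{h_P})\le\sup_{f'\in A}e_{k-1}(D,\|\cdot\|_{f'})$ and $\sup_{h\in A}\varrho(h,\pi_t(h))\le\sup_{f'\in A}\varrho(f',\pi_t(f'))\le\sup_{f'\in A}s_k(f')$, so every surviving term is dominated by $b\,{\rm diam}(A,\varrho)+C\sup_{f'\in A}s_k(f')$, which gives the asserted bound on $e_k(A,\varrho)$.

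I expect the main obstacle to be the handling of the reference norm: the covering of $P_t$ must be constructed with one fixed norm $\|\cdot\|_{h_P}$ (it cannot be allowed to depend on the individual element $f$), whereas the statement is phrased with $e_{k-1}(D,\|\cdot\|_f)$; the resolution is that $s_k$ enters only through $\sup_{f\in A}s_k(f)$ and $h_P$ lies in $A$. A secondary point requiring care is ensuring the covering centers can be chosen inside $P_t$ so that $d(h_P,c)$ is controlled by ${\rm diam}(F_{k-1}(f),d)$ plus an interpolation error, and tracking the exponents in the Young split so that the power $q/(q-1)$ and the factor $(a2^{k/\alpha}/(b\eta))^{1/(q-1)}$ emerge precisely as stated.
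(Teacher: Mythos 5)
Your proposal is correct and follows essentially the same route as the paper's proof: partitioning $A$ along $\mathcal{F}_{k-1}$, building nets of the sets $\{\pi_t(h)\colon h\in A\cap F\}$ with centers in the image of $\pi_t$, invoking Lemma \ref{Lem-A} for the $\|\cdot\|$-diameter, applying the structural bound on $\varrho$ with reference $h_F\in A\cap F$, and splitting via the Young-type inequality $x^{1/q}y\le bx+b^{-1/(q-1)}y^{q/(q-1)}$. The counting $n_{k-1}^2\le n_k$, the conversion $d(h,\pi_t(h))\lesssim\varrho(h,\pi_t(h))^{1/p}$, and the observation that $h_F\in A$ resolves the reference-norm issue all match the paper's argument.
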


\begin{proof}
For any $F\subset B$ let
$$
A^F_{a2^{k/\alpha}}:=\{\pi_{a2^{k/\alpha}}(f)\colon f\in A\cap F\},
$$
let $h_F$ be any point in $A\cap F$ and let $T_{k-1}^F\subset A^F_{a2^{k/\alpha}}$ be any net such that
$|T_{k-1}^F|\le 2^{2^{k-1}}$ and
$$
\sup\limits_{u\in A^F_{a2^{k/\alpha}}}\inf_{g\in T_{k-1}^F} \|u-g\|_{h_F}\le
4 e_{k-1}(A^F_{a2^{k/\alpha}}, \|\cdot\|_{h_F}).
$$
Let $T_k:=\bigcup\limits_{F\in\mathcal{F}_{k-1}}T^F_{k-1}$. Note that $|T_k|\le 2^{2^k}$.
We now show that
$$
\sup\limits_{f\in A}\inf_{v\in T_k}\varrho(f,v)
\lesssim b\, {\rm diam}(A, \varrho) + \sup\limits_{f\in A}s_k(f).
$$
Let $f\in A$ and let $g\in T_{k-1}^{F_{k-1}(f)}$ be such that
$$
\|\pi_{a2^{k/\alpha}}(f) - g\|_{h_{F_{k-1}(f)}}\le
4e_{k-1}(A^{F_{k-1}(f)}_{a2^{k/\alpha}}, \|\cdot\|_{h_{F_{k-1}(f)}}).
$$
We have
\begin{multline*}
\inf_{v\in T_k}\varrho(f,v)\le R\varrho(f, \pi_{a2^{k/\alpha}}(f)) + R\varrho(\pi_{a2^{k/\alpha}}(f), g)
\lesssim
\varrho(f, \pi_{a2^{k/\alpha}}(f)) + \|\pi_{a2^{k/\alpha}}(f) - g\|_{h_{F_{k-1}(f)}}
\\
+
d(\pi_{a2^{k/\alpha}}(f),g)\bigl(d(\pi_{a2^{k/\alpha}}(f),h_{F_{k-1}(f)})^{p-1} + d(h_{F_{k-1}(f)},g)^{p-1}\bigr).
\end{multline*}
Since $g\in T_{k-1}^{F_{k-1}(f)}\subset A^{F_{k-1}(f)}_{a2^{k/\alpha}}$ there is
an element
$f'\in A\cap F_{k-1}(f)$ such that $g=\pi_{a2^{k/\alpha}}(f')$.
Thus,
\begin{multline*}
d(\pi_{a2^{k/\alpha}}(f),g)
\le
d(\pi_{a2^{k/\alpha}}(f),f) + d(f,f')+d(f',\pi_{a2^{k/\alpha}}(f'))
\\
\le 2\sup\limits_{h\in A}d(\pi_{a2^{k/\alpha}}(h),h) + {\rm diam}(F_{k-1}(f), d)
\lesssim
\sup\limits_{h\in A}\bigl(\varrho(\pi_{a2^{k/\alpha}}(h),h)\bigr)^{1/p}
 + \sup\limits_{h\in A}{\rm diam}(F_{k-1}(h), d)
\end{multline*}
and, similarly,
\begin{multline*}
d(\pi_{a2^{k/\alpha}}(f),h_{F_{k-1}(f)})^{p-1} + d(h_{F_{k-1}(f)},g)^{p-1}
\le
2\bigl(d(\pi_{a2^{k/\alpha}}(f),h_{F_{k-1}(f)}) + d(h_{F_{k-1}(f)},g)\bigr)^{p-1}
\\
\lesssim
\bigl(\sup\limits_{h\in A}\bigl(\varrho(\pi_{a2^{k/\alpha}}(h),h)\bigr)^{(p-1)/p} +
\sup\limits_{h\in A}{\rm diam}(F_{k-1}(h), d)\bigr)^{p-1}.
\end{multline*}
The above bounds imply
\begin{multline*}
\inf_{v\in T_k}\varrho(f,v)
\lesssim
\sup\limits_{h\in A}\varrho(h, \pi_{a2^{k/\alpha}}(h))
+
e_{k-1}(A^{F_{k-1}(f)}_{a2^{k/\alpha}}, \|\cdot\|_{h_{F_{k-1}(f)}})
+ \sup\limits_{h\in A}\bigl({\rm diam}(F_{k-1}(h), d)\bigr)^p.
\end{multline*}
We now apply Lemma \ref{Lem-A} 
to estimate the entropy number $e_{k-1}(A^{F_{k-1}(f)}_{a2^{k/\alpha}}, \|\cdot\|_{h_{F_{k-1}(f)}})$:
\begin{multline*}
e_{k-1}(A^{F_{k-1}(f)}_{a2^{k/\alpha}}, \|\cdot\|_{h_{F_{k-1}(f)}})
\\
\lesssim
\Bigl(\frac{a2^{k/\alpha}}{\eta}\Bigr)^{1/q}\Bigl({\rm diam}(A, \varrho)
+ \sup\limits_{h\in A}\varrho(h, \pi_{a2^{k/\alpha}}(h))\Bigr)^{1/q}
e_{k-1}(D, \|\cdot\|_{h_{F_{k-1}(f)}}),
\end{multline*}
where it is important that the entropy numbers are calculated
with respect to a norm.
Using the estimate $x^{1/q}y\le bx + b^{-1/(q-1)}y^{q/(q-1)}$ we get
\begin{multline*}
e_{k-1}(A^{F_{k-1}(f)}_{a2^{k/\alpha}}, \|\cdot\|_{h_{F_{k-1}(f)}})
\lesssim
b\, {\rm diam}(A, \varrho)
+ b\sup\limits_{h\in A}\varrho(h, \pi_{a2^{k/\alpha}}(h))\\
+
\Bigl(\frac{a2^{k/\alpha}}{b\eta}\Bigr)^{1/(q-1)}
\bigl(e_{k-1}(D, \|\cdot\|_{h_{F_{k-1}(f)}})\bigr)^{q/(q-1)}.
\end{multline*}
Therefore,
\begin{multline*}
\inf_{v\in T_k}\varrho(f,v)
\lesssim
b\, {\rm diam}(A, \varrho)+
(b+1)\sup\limits_{h\in A}\varrho(h, \pi_{a2^{k/\alpha}}(h))
\\
+
\sup\limits_{h\in A}\bigl({\rm diam}(F_{k-1}(h), d)\bigr)^p
+
\Bigl(\frac{a2^{k/\alpha}}{b\eta}\Bigr)^{1/(q-1)}
\sup\limits_{h\in A}\bigl(e_{k-1}(D, \|\cdot\|_h)\bigr)^{q/(q-1)},
\end{multline*}
which completes the proof of the lemma.
\end{proof}

{\bf Proof of Theorem \ref{T-VH}}

Let $s_k$ be as in Lemma \ref{Lem-B} for $k\ge1$ and let $s_0(f):={\rm diam}(B, \varrho)$,
then by Theorem \ref{Contraction}, one has
$$
\gamma_{\alpha,1}(B,\varrho)\le C(\alpha)\Bigl(b\, \gamma_{\alpha,1}(B,\varrho) +
\sup_{f\in B}\sum\limits_{k\ge0}2^{k/\alpha}s_k(f)\Bigr)
$$
which, in our case, provides the bound
\begin{multline*}
\gamma_{\alpha,1}(B,\varrho)
\lesssim
b\, \gamma_{\alpha,1}(B,\varrho) + {\rm diam}(B,\varrho)
+
(b+1)\sup\limits_{f\in B}\sum\limits_{k\ge1}2^{k/\alpha}\varrho(f, \pi_{a2^{k/\alpha}}(f))
\\+
\Bigl(\frac{a}{b\eta}\Bigr)^{1/(q-1)} \sup\limits_{f\in B}\sum\limits_{k\ge1}
\bigl(2^{k/\alpha}e_{k-1}(D, \|\cdot\|_f)\bigr)^{q/(q-1)}
+
\sup\limits_{f\in B}\sum\limits_{k\ge1}2^{k/\alpha}\bigl({\rm diam}(F_{k-1}(f), d)\bigr)^p
\end{multline*}
for any admissible sequence $(\mathcal{F}_k)$ of $B$.
Taking $b$ sufficiently small and applying Theorem \ref{Interpolation},
we get
\begin{multline*}
\gamma_{\alpha,1}(B,\varrho)
\lesssim
{\rm diam}(B,\varrho) +
a^{-1}\sup_{f\in B}\|f\|
+
\Bigl(\frac{a}{\eta}\Bigr)^{1/(q-1)} \sup\limits_{f\in B}
\sum\limits_{k\ge1}\bigl(2^{k/\alpha}e_{k-1}(D, \|\cdot\|_f)\bigr)^{q/(q-1)}
\\+
\sup\limits_{f\in B}\sum\limits_{k\ge1}2^{k/\alpha}\bigl({\rm diam}(F_{k-1}(h), d)\bigr)^p.
\end{multline*}
Since $B\subset D$ we get $\sup\limits_{f\in B}\|f\|\le \sup\limits_{f\in D}\|f\|=1$.
Taking infimum over all admissible sequences $(\mathcal{F}_k)$ of $B$ and
taking
$$
a=\Bigl(\eta^{-1/(q-1)} \sup\limits_{f\in B}
\sum\limits_{k\ge1}\bigl(2^{k/\alpha}e_{k-1}(D, \|\cdot\|_f)\bigr)^{q/(q-1)}\Bigr)^{-(q-1)/q},
$$
we obtain
\begin{multline*}
\gamma_{\alpha,1}(B,\varrho)
\lesssim
{\rm diam}(B,\varrho)
+
\eta^{-1/q}\Bigl(\sup\limits_{f\in B}
\sum\limits_{k\ge0}\bigl(2^{k/\alpha}e_k(B, \|\cdot\|_f)\bigr)^{q/(q-1)}\Bigr)^{(q-1)/q}
+
\gamma_{\alpha p,p}(B, d)^p
\end{multline*}
Since ${\rm diam}(B,\varrho)\le c_2{\rm diam}(B, \|\cdot\|_h)+c_2{\rm diam}(B, d)^p$, we get the claim of the theorem.

\section{Appendix B: the proof of Theorem \ref{T-Conv-2}}

We firstly formulate the desired statement.

{\bf Theorem \ref{T-Conv-2}.}{\it\
Let $B$ be a symmetric $q$-convex (with constant $\eta$)
body in some linear space $L$ and
let $\|\cdot\|$ be a norm on $L$.
Then
for any $\alpha>0$ and for any $p\in[1,q)$
there is a number $C(\alpha,p,q)>0$
such that
$$
\gamma_{\alpha,p}(B, \|\cdot\|)\le
C(\alpha,p,q)\eta^{-p/q}\Bigl(\sum_{k\ge0}(2^{k/\alpha}e_k(B, \|\cdot\|))^{\frac{pq}{q-p}}
\Bigr)^{\frac{q-p}{pq}}.
$$
}

The set $B$ is a unit ball of some $q$-convex (with constant $\eta$) norm $\|\cdot\|_B$.
Let 
$$K(t,f):=\inf\limits_{g\in L}(\|g\|_B + t^p\|f-g\|^p)$$
and let $\pi_t(f)$ be any minimizer.

We need the following lemma from \cite{VH} (see Lemma 5.9 there).

\begin{lemma}\label{Lem-3} For every $a>0$ one has
$$
\sup\limits_{f\in B}\sum_{k\ge0}\bigl(2^{k/\alpha}\|f-\pi_{a2^{k/\alpha}}(f)\|\bigr)^p
\le c(\alpha) a^{-p}.
$$
\end{lemma}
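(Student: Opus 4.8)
The plan is to run the real-interpolation ($K$-functional) argument of van Handel, extracting the increments $\|f-\pi_t(f)\|^p$ as slopes of supporting affine functions of the $K$-functional and summing them by a telescoping comparison against a bounded monotone quantity. Passing to the variable $s=t^p$, I would set, for a fixed $f\in B$,
$$
\varphi(s):=K(s^{1/p},f)=\inf_{g\in L}\bigl(\|g\|_B+s\|f-g\|^p\bigr),\qquad g_k:=\pi_{a2^{k/\alpha}}(f),\quad D_k:=\|f-g_k\|^p,
$$
and $s_k:=(a2^{k/\alpha})^p=a^p2^{pk/\alpha}$. Three elementary facts drive the proof: (i) since $g_k$ is a minimizer at $t=a2^{k/\alpha}$ one has the identity $\varphi(s_k)=\|g_k\|_B+s_kD_k$; (ii) since $g_k$ is merely a competitor at any other $s$, one has $\varphi(s)\le\|g_k\|_B+sD_k$ for all $s\ge0$; and (iii) $0\le\varphi(s)\le\|f\|_B\le1$ for all $s$ (the upper bound by taking $g=f$, the lower bound since $\varphi$ is an infimum of nonnegative quantities).

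The heart of the estimate is that facts (i) and (ii) read at $s=s_{k-1}$ give, for every $k\ge1$,
$$
\varphi(s_{k-1})\le\|g_k\|_B+s_{k-1}D_k=\varphi(s_k)-(s_k-s_{k-1})D_k,
$$
i.e. $(s_k-s_{k-1})D_k\le\varphi(s_k)-\varphi(s_{k-1})$. Because $s_k-s_{k-1}=(1-2^{-p/\alpha})s_k$, this rearranges to $s_kD_k\le(1-2^{-p/\alpha})^{-1}\bigl(\varphi(s_k)-\varphi(s_{k-1})\bigr)$, and summing over $k\ge1$ the right-hand side telescopes and is controlled by fact (iii):
$$
\sum_{k\ge1}s_kD_k\le(1-2^{-p/\alpha})^{-1}\Bigl(\sup_s\varphi(s)-\varphi(s_0)\Bigr)\le(1-2^{-p/\alpha})^{-1}.
$$
The single term $k=0$ is handled by fact (i) together with (iii): $s_0D_0\le\varphi(s_0)\le1$. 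Hence $\sum_{k\ge0}s_kD_k\le c(\alpha,p):=1+(1-2^{-p/\alpha})^{-1}$. Finally, since $2^{pk/\alpha}=a^{-p}s_k$, the target sum is
$$
\sum_{k\ge0}\bigl(2^{k/\alpha}\|f-\pi_{a2^{k/\alpha}}(f)\|\bigr)^p=\sum_{k\ge0}2^{pk/\alpha}D_k=a^{-p}\sum_{k\ge0}s_kD_k\le c(\alpha,p)\,a^{-p},
$$
and taking the supremum over $f\in B$ gives the claim, the bound being uniform in $f$.

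The step that needs care is the summation rather than any individual estimate: the naive pointwise bound $s_kD_k\le\varphi(s_k)\le1$ is useless, since it makes the sum diverge, and the whole point is to weight by $s_k$ (equivalently by $2^{pk/\alpha}$) and exploit the geometric spacing $s_k/s_{k-1}=2^{p/\alpha}$ so that $s_kD_k$ becomes comparable to the increment $\varphi(s_k)-\varphi(s_{k-1})$ of the bounded, nondecreasing function $\varphi$. I would emphasize that no differentiability or explicit concavity of $\varphi$ is invoked: everything follows from the minimality identity (i) and the competitor inequality (ii), which is precisely the mechanism in \cite[Lemma 5.9]{VH}. The constant depends only on $\alpha$ and the fixed exponent $p$, matching the stated $c(\alpha)$.
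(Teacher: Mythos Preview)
Your proof is correct. The paper does not give its own proof of this lemma but simply cites it as \cite[Lemma~5.9]{VH}; your argument is exactly van Handel's proof of that lemma (the telescoping of the concave $K$-functional along the geometric grid $s_k=a^p2^{pk/\alpha}$), so the approaches coincide.
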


Similarly to the proof of Lemma \ref{Lem-A}, one can obtain the following lemma.

\begin{lemma}\label{Lem-C}
For any $t>0$ and for any $A\subset B$ one has
$$
{\rm diam}(A_t, \|\cdot\|_B)\le
c(p,q)\Bigl(\frac{t}{\eta}\Bigr)^{p/q}\Bigl({\rm diam}(A, \|\cdot\|) +
\sup\limits_{h\in A}\|h - \pi_t(h)\|\Bigr)^{p/q}
$$
where
$A_t:=\{\pi_t(h)\colon h\in A\}$.
\end{lemma}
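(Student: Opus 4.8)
The plan is to reproduce the proof of Lemma \ref{Lem-A} almost verbatim, now with the $q$-convex norm $\|\cdot\|_B$ in the role that $\|\cdot\|$ played there and with the genuine norm $\|\cdot\|$ (entering through its $p$-th power) in the role of the quasi-metric $\varrho$. Since $\|\cdot\|$ is an honest norm, the relaxed triangle inequality and the midpoint constant $\varkappa$ are not needed any more, which in fact shortens the bookkeeping: the term involving $\|\pi_t(f)-\pi_t(g)\|$ that had to be carried along in Lemma \ref{Lem-A} does not appear here.

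First I would record that $\pi_t(h)\in B$ for every $h\in A$: taking $g=h$ in the infimum defining $K(t,h)$ gives $\|\pi_t(h)\|_B\le K(t,h)\le\|h\|_B\le 1$, so the $q$-convexity inequality for $\|\cdot\|_B$ is applicable to all the minimizers. Now fix $f,g\in A$ and put $u:=\frac{1}{2}\bigl(\pi_t(f)+\pi_t(g)\bigr)$. By sub-optimality of $u$ in the definition of $K(t,f)$,
$$
\|\pi_t(f)\|_B\le K(t,f)\le\|u\|_B+t^p\|f-u\|^p,
$$
and likewise with $f$ replaced by $g$. Writing $f-u=\frac{1}{2}(f-\pi_t(f))+\frac{1}{2}(f-\pi_t(g))$ and using the triangle inequality for $\|\cdot\|$, together with $\|f-\pi_t(f)\|\le\sup_{h\in A}\|h-\pi_t(h)\|$ and $\|f-\pi_t(g)\|\le\|f-g\|+\|g-\pi_t(g)\|\le{\rm diam}(A,\|\cdot\|)+\sup_{h\in A}\|h-\pi_t(h)\|$, one gets $\|f-u\|\le\Delta$, where $\Delta:={\rm diam}(A,\|\cdot\|)+\sup_{h\in A}\|h-\pi_t(h)\|$. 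Hence $\max(\|\pi_t(f)\|_B,\|\pi_t(g)\|_B)\le\|u\|_B+t^p\Delta^p$.

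Next, by the $q$-convexity of $\|\cdot\|_B$ (legitimate by the first step) one has $\|u\|_B\le\max(\|\pi_t(f)\|_B,\|\pi_t(g)\|_B)-\eta\|\pi_t(f)-\pi_t(g)\|_B^q$; substituting this into the previous inequality and cancelling the maximum yields $\eta\|\pi_t(f)-\pi_t(g)\|_B^q\le t^p\Delta^p$, i.e.
$$
\|\pi_t(f)-\pi_t(g)\|_B\le\Bigl(\frac{t^p\Delta^p}{\eta}\Bigr)^{1/q}\le c(p,q)\Bigl(\frac{t}{\eta}\Bigr)^{p/q}\Delta^{p/q},
$$
where in the last step I use that a $q$-convex norm necessarily has convexity constant $\eta\le 2^{-q}<1$ (test the defining inequality on $v,-v$ with $\|v\|_B=1$), so that $\eta^{-1/q}\le\eta^{-p/q}$. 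Taking the supremum over $f,g\in A$ gives the asserted estimate for ${\rm diam}(A_t,\|\cdot\|_B)$.

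I do not expect a real obstacle here; the two points that need a little attention are (i) verifying that the minimizers $\pi_t(h)$ stay inside the unit ball $B$, so that the $q$-convexity inequality may be invoked for them, and (ii) tracking the exponents: because $\|\cdot\|$ enters $K(t,\cdot)$ through its $p$-th power, the term $t^p\|f-u\|^p$ contributes $\Delta^p$ rather than $\Delta$, which is precisely why the final power is $p/q$ and not $1/q$. Everything else is a line-by-line transcription of the proof of Lemma \ref{Lem-A}.
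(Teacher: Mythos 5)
Your proof is correct and is exactly the adaptation of the proof of Lemma \ref{Lem-A} that the paper intends (the paper only says ``similarly to the proof of Lemma \ref{Lem-A}'' and omits the details); the direct decomposition $f-u=\tfrac12(f-\pi_t(f))+\tfrac12(f-\pi_t(g))$ correctly replaces the quasi-triangle/midpoint bookkeeping, and your observation that $\eta\le 2^{-q}$ legitimately converts the naturally arising factor $\eta^{-1/q}$ into the stated $\eta^{-p/q}$ (your argument in fact yields the slightly sharper constant $\eta^{-1/q}$).
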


{\bf Proof of Theorem \ref{T-Conv-2}}

From Lemma \ref{Lem-C}, for any $b>0$ we get the bound
\begin{multline*}
e_k(A_t, \|\cdot\|)
\le c(p,q)\Bigl(\frac{t}{\eta}\Bigr)^{p/q}\Bigl({\rm diam}(A, \|\cdot\|) +
\sup\limits_{h\in A}\|h - \pi_t(h)\|\Bigr)^{p/q}\!e_k(B,\|\cdot\|)
\\
\le
c(p,q)\bigl(b\, {\rm diam}(A, d) +
b\, \sup\limits_{h\in A}d(h, \pi_t(h)) + \Bigl(\frac{t}{b\eta}\Bigr)^{\frac{p}{q-p}}
e_k(B, \|\cdot\|)^{\frac{q}{q-p}}\bigr)
\end{multline*}
and
\begin{multline*}
e_k(A,\|\cdot\|)\le
c(p,q)\bigl(b\, {\rm diam}(A, \|\cdot\|) +
(b+1)\sup\limits_{h\in A}\|h - \pi_t(h)\|
+ \Bigl(\frac{t}{b\eta}\Bigr)^{\frac{p}{q-p}}e_k(B, \|\cdot\|)^{\frac{q}{q-p}}\bigr).
\end{multline*}
Taking $t=a2^{k/\alpha}$ and applying Theorem \ref{Contraction} we get
\begin{multline*}
\gamma_{\alpha,p}(B,d)\le c(\alpha,p,q)\Bigl(b\gamma_{\alpha,p} +
(b+1)\bigl[\sup\limits_{h\in B}\sum_{k\ge0}\bigl(2^{k/\alpha}
\|h - \pi_{a2^{k/\alpha}}(h)\|\bigr)^p\bigr]^{1/p}
\\+ \Bigl(\frac{a}{b\eta}\Bigr)^{\frac{p}{q-p}}
\bigr[\sum_{k\ge0}\bigl(2^{(1+\frac{p}{q-p})\frac{k}{\alpha}}
e_k(B, \|\cdot\|)^{\frac{q}{q-p}}\bigr)^p\bigr]^{1/p}\Bigr).
\end{multline*}
Taking $b$ sufficiently small and applying Lemma \ref{Lem-3} we get
$$
\gamma_{\alpha,p}(B,d)\le
c(\alpha,p,q)
\bigl(a^{-1}+ \Bigl(\frac{a}{\eta}\Bigr)^{\frac{p}{q-p}}
\Bigr[\sum_{k\ge0}\bigl(2^{k/\alpha}e_k(B, \|\cdot\|)\bigr)^{\frac{pq}{q-p}}\Bigr]^{1/p}\bigr).
$$
Optimizing over $a>0$ we get the desired bound.

\section{Appendix C: the proof of Lemma \ref{ent-est-2}}

We again point out that the proof follows the ideas of the proof of
\cite[Proposition 16.8.6]{Tal}.
Firstly, we recall the desired statement.

{\bf Lemma \ref{ent-est-2}.}{\it
Let $p\in(1,2)$ and let $\mu$ be a probability Borel measure
on a compact set $\Omega$.
There is a constant $C:=C(p)$ such that,
if $L$ is an $N$-dimensional subspace of~$L^p(\mu)\cap C(\Omega)$
such that
$$
\|f\|_\infty\le M\|f\|_2\quad \forall f\in L
$$
for some number $M\ge 2$,
then
for any fixed set of $m$ points $X=\{X_1, \ldots, X_m\}$
one has
$$
e_k(B_p(L), \|\cdot\|_{\infty,X})
\le C[\log m]^{1/2} [\log M]^{\frac{1}{p}-\frac{1}{2}}M^{2/p}
2^{-k/p},
$$
where $B_p(L)=\{f\in L\colon \|f\|_p\le1\}$.
}

{\bf Proof of Lemma \ref{ent-est-2}.}
Since $\|f\|_\infty^2\le M^2 \|f\|_2^2\le\|f\|_\infty^{2-p}\|f\|_p^p$,
we have $\|f\|_\infty\le M^{2/p}\|f\|_p$ for any element $f\in L$ implying
that for
any fixed set of points $X=\{X_1,\ldots, X_m\}$ one has
$e_0(B_p(L), \|\cdot\|_{\infty, X})\le C_0 M^{2/p}$.
We further use the following known property (see \cite[Lemma 16.8.9]{Tal}) of the entropy numbers:
$$
e_{k+1}(B_p(L),\|\cdot\|_{\infty, X})\le
2e_k(B_p(L), \|\cdot\|_2)\, e_k(B_2(L),\|\cdot\|_{\infty,X}),
$$
where $B_2(L):=\{f\in L\colon \|f\|_2\le1\}$.
We will also use the following classical dual Sudakov bound for the
entropy numbers of an Euclidean ball with respect to some norm $\|\cdot\|$:
$$
e_k(B_2(L),\|\cdot\|)\le c\, 2^{-k/2}\mathbb{E}_g\bigl\|\sum_{k=1}^{N}g_ku_k\bigr\|.
$$
Here $g=(g_1,\ldots, g_N)$ is the standard Gaussian random vector
and $\{u_1,\ldots, u_N\}$ is any orthonormal basis in $L$.
By this bound,
$$
e_k(B_2(L),\|\cdot\|_{\infty,X})
\le
c\, 2^{-k/2}\mathbb{E}_g\bigl\|\sum_{k=1}^{N}g_ku_k\bigr\|_{\infty, X},
$$
where $c$ is a numerical constant.
We now note that
\begin{multline*}
\mathbb{E}_g\bigl\|\sum_{k=1}^{N}g_ku_k\bigr\|_{\infty, X}
=
\mathbb{E}_g\max\limits_{1\le j\le m}\bigl|\sum_{k=1}^{N}g_ku_k(X_j)\bigr|
\\
\le c_1 \max\limits_{1\le j\le m}\bigl(\sum\limits_{k=1}^N|u_k(X_j)|^2\bigr)^{1/2}[\log m]^{1/2}
\le  c_1 M[\log m]^{1/2}
\end{multline*}
where we have used the known bound for the expectation
of the maximum of  Gaussian random variables (see \cite[Proposition 2.4.6]{Tal}).
Thus,
$$
e_k(B_2(L),\|\cdot\|_{\infty,X})
\le
c_2 M2^{-k/2}[\log m]^{1/2}.
$$
For any $r>1$, we also have
$$
e_k(B_2(L), \|\cdot\|_r)
\le
c\,  2^{-k/2}\mathbb{E}_g\bigl\|\sum_{k=1}^{N}g_ku_k\bigr\|_r.
$$
Note, that
\begin{multline*}
\mathbb{E}_g\bigl\|\sum_{k=1}^{N}g_ku_k\bigr\|_r
\le
\bigl(\mathbb{E}_X\mathbb{E}_g\bigl|\sum_{k=1}^{N}g_ku_k(X)\bigr|^r\bigr)^{1/r}
\le c_3\sqrt{r} \Bigl(\mathbb{E}_X\bigl(\sum_{k=1}^{N}|u_k(X)|^2\bigr)^{r/2}\Bigr)^{1/r}
\le
c_3M\sqrt{r}.
\end{multline*}
For a fixed $r>2$ we now proceed similar to the proof of \cite[Lemma 16.8.8]{Tal}.
Take any $R>r$ and let $\theta\in(0,1)$
be such that $\frac{1}{r} = \frac{1-\theta}{2} + \frac{\theta}{R}$.
Then one has $\|f\|_r\le \|f\|_2^{1-\theta}\|f\|_R^\theta$ and
$$
e_k(B_2(L),\|\cdot\|_r)\le 2e_k(B_2(L),\|\cdot\|_R)^\theta\le
c_4[2^{-k}RM^2]^{\theta/2}
=
c_4[2^{-k}RM^2]^{\frac{1}{2}-\frac{1}{r} + \frac{\theta}{R}}.
$$
Thus, since $M\ge1$ one has
$$
[2^{-k}M^2]^{\theta/R}\le M^{2/R},\quad R^{\theta/R}\le 2.
$$
Taking $R=2r\log M$, we get
$$
e_k(B_2(L),\|\cdot\|_r)
\le
c_5 r^{\frac{1}{2}-\frac{1}{r}} [2^{-k}M^2\log M]^{\frac{1}{2}-\frac{1}{r}}.
$$
By \cite[Lemma 16.8.10]{Tal}, we get
$$
e_k(B_{r'}(L),\|\cdot\|_2)\le c_6r^{\frac{1}{2}-\frac{1}{r}}
[2^{-k}M^2\log M]^{\frac{1}{2}-\frac{1}{r}}.
$$
Taking $r=p'$, we get
$$
e_{k+1}(B_p(L),\|\cdot\|_{\infty, X})
\le
c_7(1-1/p)^{\frac{1}{2}-\frac{1}{p}}[\log m]^{1/2} [\log M]^{\frac{1}{p}-\frac{1}{2}}
  M^{2/p}2^{-k/p}.
$$
The lemma is proved.

\vskip .1in

{\bf Acknowledgments.}

The author would like to thank Professor V.N. Temlyakov and Professor B.S. Kashin
for helpful and stimulating discussions.

The author is a Young
Russian Mathematics award winner and would like to thank its sponsors and jury.

The work was supported by the Russian Federation
Government Grant No.~14.W03.31.0031.

\end{document}